\documentclass[11pt,times,letter]{article}
\usepackage{amsmath, amsfonts,amsthm}
\usepackage{dsfont}
\usepackage{bbm}

\usepackage[english]{babel}

\usepackage[normalem]{ulem}

\usepackage{latexsym,amssymb,graphicx}
\usepackage{verbatim}
\usepackage{mathrsfs}
\usepackage{epsfig}

\usepackage{xcolor}
\usepackage{tikz}
\usepackage{pgfplots}
\usepgfplotslibrary{fillbetween}

\usepackage{url}
\usepackage{bm}
\usepackage{natbib}

\usepackage[colorlinks,linkcolor=blue,citecolor=blue,anchorcolor=blue]{hyperref}

\def\esssup_#1{\underset{#1}{\mathrm{ess\,sup\, }}}
\def\essinf_#1{\underset{#1}{\mathrm{ess\,inf\, }}}
\def\argmax_#1{\underset{#1}{\mathrm{arg\,max\, }}}
\def\argmin_#1{\underset{#1}{\mathrm{arg\,min\, }}}

\newtheorem{theorem}{Theorem}[section]
\newtheorem{definition}{Definition}[section]
\numberwithin{equation}{section}

\newtheorem{proposition}[theorem]{Proposition}
\newtheorem{assumption}[theorem]{Assumption}
\newtheorem{remark}[theorem]{Remark}
\newtheorem{lemma}[theorem]{Lemma}

\allowdisplaybreaks[4]

\definecolor{Red}{rgb}{1.00, 0.00, 0.00}

\definecolor{DRed}{rgb}{0.5, 0.00, 0.00}

\definecolor{Blue}{rgb}{0.00, 0.00, 1.00}

\definecolor{Green}{rgb}{0.0, 0.4, 0.0}

\definecolor{mycolor1}{rgb}{0.00000,0.44700,0.74100}%
\definecolor{mycolor2}{rgb}{0.85000,0.32500,0.09800}%
\definecolor{mycolor3}{rgb}{0.92900,0.69400,0.12500}%
\definecolor{mycolor4}{rgb}{0.49400,0.18400,0.55600}%
\definecolor{mycolor5}{rgb}{0.46600,0.67400,0.18800}%
\definecolor{mycolor6}{rgb}{0.00000,1.00000,1.00000}%

\pgfplotsset{compat=newest}

\title{Graphon Mean Field Game of mutual holding}

\author{Daorong Cui \thanks{Email: dcuiab@connect.ust.hk, Department of Mathematics, The Hong Kong University of Science and Technology, Clearwater Bay, Kowloon, Hong Kong.}
\and
Shuoqing Deng \thanks{Email: masdeng@ust.hk, Department of Mathematics, The Hong Kong University of Science and Technology, Clearwater Bay, Kowloon, Hong Kong. S. Deng is supported by the Hong Kong University of Science and Technology Start-up grant no. R9826 and Hong Kong RGC Early Career Scheme (ECS) under grant no. 26307125.}
\and
Yang Xiang\thanks{Email: maxiang@ust.hk,  Department of Mathematics, The Hong Kong University of Science and Technology, Clearwater Bay, Kowloon, Hong Kong.}
}
\date{\vspace{-1cm}}

\begin{document}
\maketitle

\begin{abstract}
 
This paper studies the mean field game of mutual holding proposed by Djete and Touzi(AAP, 2024), and consider the case where the interactions among agents are described by a graphon.  We adopt the formulation on the enlarged space which is modeled using the joint law of the value process and the graphon label, as in Lacker and Soret(MOR, 2023).  Under suitable conditions on the graphon function, we are able to provide the explicit characterization of the optimal strategy, prove the wellposedness of associated Mckean-Vlasov SDE and establish the convergence results of the Nash equilibria. The key technique consists in a detailed analysis of the continuity property under the $\mathcal{WOP}_2$ metric, and tailor-made arguments for different graphon equilibria under different regularities of the model.

\vspace{0.6 cm}

\noindent{\textbf{Mathematics Subject Classification (2020)}: 60H30, 60K35, 91A13, 91A23, 91B30. }

\vspace{0.2 cm}

\noindent{\textbf{Keywords}: Mean field games, Stochastic graphon games, McKean–Vlasov stochastic differential equation, BSDE}
\end{abstract}

\section{Introduction} \label{sec:intro}
We study the strategic interactions between economic agents due to the equity cross-holding behaviors for the purpose of risk diversification, proposed by Djete and Touzi \cite{DjeteTouzi2024}. Each agent has a prespecified indiosyncratic risk process, and seeks to maximize her utility as a function of the final wealth value. When the number of agents optimizing their objectives tends to infinite, this leads to a limiting mean field game of optimal mutual holding, for which they provided a semi-explicit equilibrium. In particular, the optimal control is of bang-bang type when there is no common noise. 

This problem was later further explored by \cite{DjeteGuo2023, {DjeteBassou2025}}. Djete et al. \cite{DjeteGuo2023} considered the default probability by introducing absorption at the origin of the equity process. They provided a semi-explicit equilibrium, and proved the existence of a unique strong solution for the equilibrium McKean-Vlasov SDE using the parametrix arguments under suitable assumptions. In \cite{DjeteBassou2025}, the authors considered the case with common noise. As in the current context the holding position affects both the drift and the diffusion terms, the optimal control is no longer of bang-bang type, and the problem exhibits the standard paradigm of mean-variance trade-off. They then further reduced the problem to a portfolio optimization problem with random endowment, and provided some characterizations of the optimal solution in the context of the Black-Scholes model with respectively the power and logarithmic utility function.

 In the present paper, our main modeling assumption is that: economic agents interact with each other heterogeneously, through positive finite measures defined as the weighted average of agents' states. In the standard setting, the analysis of such systems falls in the framework of mean field game (MFG), introduced independently in the seminal works of Lasry and Lions \cite{LasryLions2007} and Huang et al. \cite{Huang2006}. However, the standard symmetry hypothesis hinders the application of MFG in many systems of practical and theoretical interest, in which players may belong to different communities or classes, interact asymmetrically or encounter other sources of heterogeneity. 

Motivated by the limitations of MFG framework and thanks to the development of graphon theory by Lov\'asz \cite{Lovasz2012}, graphon-based continuum models have gained considerable attention in recent years. Specifically, a graphon $G$ is a symmetric measurable function: $[0,1]^2 \rightarrow [0,1]$.  We refer to \cite{CoppCres2025, BayrWu2022,BayrHe2025,Bayr2023,Wu2023} for the recent work on graphon particle systems,  \cite{LackerSoret2023, Tangpi2024, Zhang2023,Caines2021,Aurell2022} for graphon mean field games, and \cite{Djete2025, Crescenzo2026} for graphon mean field control. To justify the effectiveness of MFG limit, Lacker and Soret \cite{LackerSoret2023} used their graphon game equilibrium to construct an approximate Nash equilibrium for the N-player game with the underlying graph sequence converging in the strong operator topology. Similarly, an $\epsilon$-Nash equilibrium result was established by Huang et al. in \cite{Caines2021}. On the other hand, Bayraktar et al. \cite{Zhang2023} and Tangpi and Zhou \cite{Tangpi2024} studied the general convergence problem as an application of (backward) propagation of chaos results for systems of interacting forward-backward SDEs. 

As in \cite{LackerSoret2023}, our paper adopts the unified modeling for agents' labels by enlarging the state space from $\mathbb{R}$ to $[0, 1] \times \mathbb{R}$. It is known that working directly with a continuum of agents driven by a continuum of Brownian motions $(B^u)_{u \in [0, 1]}$, may raise significant technical issues since $(u, \omega) \mapsto B^u(\omega)$ is not jointly measurable in the usual product space, see \cite{Sun1998}. In \cite{Bayr2023,BayrWu2022, Wu2023}, this issue was avoided by arguing that the map $u \mapsto \mathcal{L}(X^u)$ is measurable, or the map $u \mapsto \mathbb{E}[X^u_t]$ is measurable \cite{Crescenzo2026}. Bayraktar et al. \cite{Zhang2023} and Amini et al. \cite{Amini2025} addressed this measurability issue by working on one stochastic basis, i.e. taking one driven Brownian motion for all labels. Aurell, Carmona and Lauriere \cite{Aurell2022}, followed by \cite{BayrHe2025, Tangpi2024, CoppCres2025} etc., directly tackled this issue by setting the model on a ``Fubini extension'' of the usual product between the sample probability space and a probability space extending the Lebesgue space $([0,1], \mathcal{B}([0,1]), \lambda_{[0,1]})$, see \cite{Sun2006} for a self-contained presentation. By working on the enlarged state space as \cite{LackerSoret2023}, we can bypass this measurability issue. Indeed, this is also natural for out purpose since the limiting dynamics in the mutual holding problem do not have a unique strong solution due to the potential singularities in the coefficients.

The main contributions of our work are two folds. First, we adopt the 2-Wasserstein On Positive ($\mathcal{WOP}_2$) metric to tackle the positive finite measures representing the heterogeneous interactions among agents, and carry out a detailed analysis of the continuity properties related to the $\mathcal{WOP}_2$ metric, see Section \ref{subsec:tech} and Section \ref{E:sec:1}. Such a metric was also employed by Bayraktar et al. in \cite{BayrHe2025}. 
 Introduced by \cite{LeblGoui2023}, $\mathcal{WOP}_2$ metric enables us to remove the non-degenerate conditions imposed on $G$ as in \cite{CoppCres2025}, and facilitate the study of the continuity property of interested quantities later, i.e. $B$ and $c$, defined in \eqref{E:eqn:3} and \eqref{E:eqn:52}. It can be seen as a more general setting as the scalar interaction framework in \cite{Bayr2023,Wu2023,Zhang2023,BayrWu2022}.

Second, under the label-state framework, we derive an explicit optimal solution, and verify the existence of a weak solution for the equilibrium SDE under the continuity assumption for $G$ on $[0,1]^2$. In particular, we then establish the approximate Nash equilibrium result for the finite-agent game with the approximation error vanishing in an averaged sense, both for the sampling kernel and the general kernel with the assumption that the interaction matrix stems from $G$ or $N^2 \|G^N -G\| \rightarrow 0$.

The rest of the paper is organized as follows. We conclude Section \ref{sec:intro} with useful notations used throughout the paper. In Section \ref{sec:results}, we provide the formulation of the problem, and present the main results of the paper: namely the explicit solution, the well-posedness of the SDE and the propogation of chaos result. In Section \ref{sec:wellpose} and Section \ref{E:sec:1}, we give the detailed proofs of the main results.

\subsection{Notations}
\noindent \textbf{Measures.} \ \ For a measurable space $E$, we denote $\mathcal{M} (E)$, $\mathcal{M}^2 (E)$, $\mathcal{P} (E)$ and $\mathcal{P}^2 (E)$ respectively the collection of finite positive measures,  finite positive measures with finite second moment, probability measures and probability measures with finite second moment on $E$. We further define $\mathcal{P}_{\text{Unif}}([0,1]\times E)$ as the collection of probability measures on $[0,1]\times E$ with uniform first marginal, and $\mathcal{P}^p_{\text{Unif}}([0,1]\times E)$ the subset of $\mathcal{P}_{\text{Unif}}([0,1]\times E)$ with finite $p$-th moment for some $p>2$.

 In addition, we introduce $m_\mu$ as the total mass of measure $\mu$, and $\bar{\mu} = \mu/m_\mu$ if $m_\mu > 0$ and $\delta_{x_0}$ otherwise, where $\delta_{x_0}$ denotes the Dirac measure at $x_0$. For $x_0 \in E, a>0$, we further define $M_{x_0} (\mu) := \int_E |x - x_0|^2 \mu(\mathrm{d} x)$ and $T_a (x): = a (x - x_0) + x_0$. The pushforward of a measure $\mu$ by a measurable mapping $T$ is denoted by $T\#\mu$.
 
We remind for a signed measure $M$, we have $|M| = M^+ + M^-$, where $M^+$ and $M^-$ are respectively the positive and the negative parts of $M$. In particular, there exist two measurable sets $P$ and $N$ such that:
\begin{itemize}
    \item $P \cup N = E$ and $P \cap N = \emptyset$,
    \item $P$ is a positive set and $N$ is a negative set,
\end{itemize}
and for every measurable set $B$, we have $M^+(B) = M(B \cap P)$ and $M^-(B) = -M(B \cap N)$.

\textbf{Metrics and norms.} \ \ We equip $\mathcal{P}^2 (E)$ with the 2-Wasserstein distance $\mathcal{W}_2$ so that it is a metric space itself. The 2-Wasserstein distance on $\mathcal{P}^2 (E)$ is defined for $\mu, \nu \in \mathcal{P}^2(E)$
\begin{equation*}
    \mathcal{W}_2 (\mu, \nu) = \inf_{\gamma \in \Pi (\mu, \nu)} \left(\int_{E \times E} |x - y|^2 \gamma(\mathrm{d}x, \mathrm{d}y)\right)^{1/2},
\end{equation*}
where $\Pi (\mu, \nu)$ denotes the set of probability measures in $\mathcal{P} (E \times E)$ with the first marginal $\mu$ and the second marginal $\nu$. One can also find a control on the Wasserstein distance, see (equation (1.3), \cite{CoppCres2025}), i.e. for any $\mu, \nu \in \mathcal{P}^2(E)$:
\begin{equation}
    \mathcal{W}^2_2(\mu,\nu) \leq 2 \int_E |x-x_0|^2|\mu - \nu| (\mathrm{d}x), \quad \text{for some $x_0 \in E$},
    \label{E:eqn:28}
\end{equation}
where $|\mu-\nu|$ is the variation of the (signed) measure $\mu - \nu$. 

 Let $E = \mathbb{R}^d$. Fix an arbitrary reference point $x_0 \in \mathbb{R}^d$. For any $\mu, \nu \in \mathcal{M}^2 (\mathbb{R}^d)$, the 2-Wasserstein On Positive measures ($\mathcal{WOP}_2$) metric is defined by
\begin{align*}
    \mathcal{WOP}_2^2 (\mu, \nu) & = (m_\mu - m_\nu)^2 + \mathcal{W}_2^2 (T_{m_\mu}\#\bar{\mu}, T_{m_{\nu}}\#\bar{\nu}) \\
    & = (m_\mu - m_\nu)^2 + (m_\mu - m_\nu) \left(M_{x_0} (\mu) - M_{x_0}(\nu)\right) + m_\mu m_\nu \mathcal{W}^2_2 (\bar{\mu}, \bar{\nu}),
\end{align*}
where the second equality comes from equation (7) in \cite{LeblGoui2023}.
    
 Denote by $C([0,T], \mathbb{R}^d)$ the set of continuous functions from $[0, T]$ to $\mathbb{R}^d$, endowed with the topology of uniform convergence. Let $\mathcal{C}^d := C([0, T], \mathbb{R}^d)$ and $\|x\|_t := \sup_{0 \leq s \leq t} |x_s|$ for $x$ in $\mathcal{C}^d$ and $t \in [0, T]$. We simply write $\mathcal{C}$ when $d=1$.

\textbf{Graphon functions.} \ \  Let $I := [0, 1]$. Denote by $\mathcal{G}$ the space of all bounded symmetric measurable functions $G: I \times I \rightarrow \mathbb{R}$. A graphon $G$ is an element of $\mathcal{G}$ with $0 \leq G \leq 1$. The cut-norm on $\mathcal{G}$ is defined by
\begin{equation*}
    \|G\|_\square := \sup_{S, T \in \mathcal{B}(I)} \left|\int_{S \times T} G(u,v)\mathrm{d}u\mathrm{d}v\right|.
\end{equation*}

 The step graphon $G^N$ associated with an $N \times N$ matrix $(\xi^N_{i j})_{1 \leq i,j \leq N}$ is defined as follows:
\begin{equation}
    G^N (u,v) := \xi^N_{i j}, \quad \text{for } (u,v) \in I^N_i \times I^N_j,
    \label{E:eqn:21}
\end{equation}
where $I^N_i := [(i-1)/N, i/N)$, for $i =1 , \ldots, N-1$, and $I^N_N := [(N-1)/N, 1]$.
    
For a given $\mu \in \mathcal{P}([0,1]\times E)$, we define the map $[0,1] \ni u \mapsto [G \mu](u) \in \mathcal{M}(E)$ as:
$$
[G \mu](u):=\int_I G(u,v) \mu(\mathrm{d}v, \mathrm{d}x).
$$

\section{Main results} \label{sec:results}
\subsection{Problem formulation}
\subsubsection{N-agent game}
Denote by $\mathbf{u}:=(u^1, \cdots, u^N) \in I^N_1 \times \cdots \times I_N^N$ the labels of $N$ agents. The states of these agents/particles are governed by the following dynamics:
\begin{equation*}
    \mathrm{d}X_t^i = \frac{1}{N}\sum_{j=1}^N\pi_t^{i,j}\mathrm{d}X_t^j - \frac{1}{N}\sum_{j=1}^N\pi_t^{j,i}\mathrm{d}X^i_t + b(t, X_t^i, M^{i,N}_t)\mathrm{d}t + \sigma(t, X^i_t, M^{i,N}_t)\mathrm{d}W^i_t,
\end{equation*}
where the neighborhood empirical measure $M^{i,N}_t$ which reflects the influence of the other agents on agent $i$, is defined by
\begin{equation}
    M^{i,N}_t := [G^N \mu^N_t](u^i) = \frac{1}{N}\sum_{j=1}^N \xi^N_{i j} \delta_{X^j_t}, \text{ with } \mu^N_t := \frac{1}{N}\sum_{j=1}^N \delta_{(u^j, X_t^j)}.
    \label{E:eqn:16}
\end{equation}

\subsubsection{Equilibrium dynamics}
Let $T>0$ and $\widehat{\Omega}:=[0,1]\times \mathcal{C}$. Denote by $(\widehat{U}, \widehat{X})$ the canonical elements on $\widehat{\Omega}$ and by $\widehat{\mathbb{F}}^0:=\{\widehat{\mathcal{F}}^0_t, t\in [0,T]\}$ the corresponding natural filtration, with $\widehat{\mathcal{F}}^0_t:=\sigma\{\widehat{U}; \widehat{X}_s, s\leq t\}$. Define further $\widehat{\mathbb{F}}:=\{\widehat{\mathcal{F}}_t, t \in [0,T]\}$ to be the filtration given by $\widehat{\mathcal{F}}_t := \lim_{s \downarrow t}\widehat{\mathcal{F}}^U_s$ with $\widehat{\mathcal{F}}^U_t := \cap_{\mu \in \mathcal{P}(\widehat{\Omega})} (\widehat{\mathcal{F}}^0_t)^\mu$, where $(\widehat{\mathcal{F}}^0_t)^\mu$ denotes the completion of $\widehat{\mathcal{F}}^0_t$ by $\mu$.

We also fix some initial distribution $\nu \in \mathcal{P}^p_{\text{Unif}}([0,1]\times\mathbb{R})$, with $p>2$. Let $\mathcal{P}_\mathcal{S} \subset \mathcal{P}(\widehat{\Omega})$ be the collection of all probability measures $\mu$ on $\widehat{\Omega}$ such that $\mu \circ (\widehat{U})^{-1} = \text{Unif}([0,1])$, and $\widehat{X}$ is a $\mu$-square integrable It\^o process, i.e.
$$
\widehat{X}_t := \widehat{X}_0 + \int_0^t B^\mu_s \mathrm{d}s + \int_0^t \Sigma^\mu_s\mathrm{d} W_s^\mu, \quad t \in [0, T], \quad \text{$\mu$-a.s. with $\mu \circ (\widehat{U}, \widehat{X}_0)^{-1} = \nu$}
$$
for some $\mu$-Brownian motion $W^\mu$, and some $\widehat{\mathbb{F}}$-progressively measurable deterministic functions $B^\mu, \Sigma^\mu: [0,T]\times \widehat{\Omega} \rightarrow \mathbb{R}$ satisfying
$$
\widehat{\mathbb{E}}^\mu \left[\int_0^T |B^\mu_s|^2 + |\Sigma^\mu_s|^2\mathrm{d}s\right] < \infty,
$$
where $\widehat{\mathbb{E}}^\mu$ denotes the corresponding expectation operator on $(\widehat{\Omega}, \widehat{\mathbb{F}}, \mu)$.

An admissible mutual holding strategy is defined by a measurable function
$$
\pi: [0,T] \times ([0, 1] \times \mathbb{R})^2 \longrightarrow \mathbb{R}.
$$
We denote by $\mathcal{A}$ the collection of all such maps. We fix some distribution $\mu \in \mathcal{P}_\mathcal{S}$ and consider for all such holding strategy $\pi \in \mathcal{A}$ the following mean-field SDE driven by a $\mathbb{P}$-Brownian motion $W$ on a filtered complete probability space $(\Omega, \mathcal{F}, \mathbb{F}:=\{\mathcal{F}_t\}_{t \in [0, T]}, \mathbb{P})$:
\begin{align*}
    X_t = X_0 &+ \widehat{\mathbb{E}}^\mu \left[\int_0^t \pi(s, U,X_s,\widehat{U},\widehat{X}_s)\mathrm{d}\widehat{X}_s\right] - \int_0^t \widehat{\mathbb{E}}^\mu \left[\pi(s, \widehat{U}, \widehat{X}_s, U, X_s)\right] \mathrm{d}X_s \\
    & + \int_0^t b(s, X_s, [G\mu_s](U))\mathrm{d}s + \int_0^t \sigma(s, X_s, [G \mu_s](U))\mathrm{d}W_s,
\end{align*}
or in differential form:
\begin{equation}
    \mathrm{d}X_t = \frac{b(t, X_t, [G\mu_t](U)) + \widehat{\mathbb{E}}^\mu \left[\pi(t, U, X_t, \widehat{U}, \widehat{X}_t) B^\mu(t, \widehat{U}, \widehat{X}_t)\right]}{1+\widehat{\mathbb{E}}^\mu\left[\pi(t, \widehat{U}, \widehat{X}_t, U, X_t)\right]}\mathrm{d}t + \frac{\sigma(t, X_t, [G\mu_t](U))}{1+\widehat{\mathbb{E}}^\mu\left[\pi(t, \widehat{U}, \widehat{X}_t, U, X_t)\right]}\mathrm{d}W_t,
    \label{E:eqn:53}
\end{equation} 
where $\mu_t := \mathcal{L}(U,X_t)$. Let $\mathcal{P}_\mathcal{S}(\pi)$ be the subset consisting of all measures $\mu \in \mathcal{P}_\mathcal{S}$ such that the last SDE has a weak solution $X$ satisfying $\mu = \mathcal{L}(U, X)$. By identification, the drift and diffusion coefficients must hold with:

\begin{align}
    B^\mu(t,u,x) &:= \frac{b(t,x,[G\mu](u)) + \int \pi(t,u,x,v,y) B^\mu(t,v,y)\mu(\mathrm{d}v, \mathrm{d}y)}{1+\int \pi(t,v,y,u,x)\mu(\mathrm{d}v, \mathrm{d}y)}, \label{E:eqn:1} \\
    \Sigma^\mu(t,u,x) &:= \frac{\sigma(t,x,[G\mu](u))}{1+\int \pi(t,v,y,u,x)\mu(\mathrm{d}v, \mathrm{d}y)} \label{E:eqn:2}.
\end{align}

\subsubsection{Graphon mean field game formulation}
For all $\pi \in \mathcal{A}$ and $\mu \in \mathcal{P}_\mathcal{S}(\pi)$, the deviation of the representative agent from the mutual holding strategy $\pi \in \mathcal{A}$ to an alternative one $\beta \in \mathcal{A}$ is defined by introducing an equivalent probability measure $\mathbb{P}^\beta_{\pi, \mu}$ via the density w.r.t. $\mathbb{P}$:
$$
\frac{\mathrm{d} \mathbb{P}^\beta_{\pi, \mu}}{\mathrm{d} \mathbb{P}} := \exp\left(\int_0^T \psi_t \mathrm{d}W_t - \frac{1}{2}|\psi_t|^2\mathrm{d}t\right), \quad \text{on $\mathcal{F}_T$},
$$
where the process $\psi = \psi^\beta_{\pi, \mu}$ is defined by
$$
\psi_t := \frac{\widehat{\mathbb{E}}^\mu \left[(\beta - \pi)(t,U,X_t,\widehat{U},\widehat{X}_t)B^\mu(t,\widehat{U},\widehat{X}_t)\right]}{\sigma(t,X_t,[G\mu_t](U))} = \frac{\int (\beta-\pi)(t,U,X_t,v,y)B^\mu(t,v,y)\mu_t(\mathrm{d}v, \mathrm{d}y)}{\sigma(t,X_t,[G\mu_t](U))}.
$$

We shall assume below that $\sigma$ is bounded from below away from zero, so that the above change of measure is well-defined by the boundedness of $\pi$ and $\beta$ and the $\mu$-square integrability of $B^\mu$. Moreover, it follows from the Girsanov theorem that the process $W^{\pi,\mu,\beta}_\cdot := W_\cdot - \int_0^\cdot \psi_t\mathrm{d}t$ is a $\mathbb{P}^\beta_{\pi, \mu}$-Brownian motion, so that the $\mathbb{P}^\beta_{\pi, \mu}$-dynamics of the value process $X$ are given by
\begin{align*}
    X_t = &X_0 + \int_0^t \Big(b(s,X_s, [G\mu_s](U)) + \widehat{\mathbb{E}}^\mu\left[\beta(s,U,X_s,\widehat{U}, \widehat{X}_s) B^\mu (s, \widehat{U}, \widehat{X}_s)\right]\Big) \mathrm{d}s \\
    & - \int_0^t \widehat{\mathbb{E}}^\mu \left[\pi(s,\widehat{U}, \widehat{X}_s, U, X_s)\right] \mathrm{d}X_s + \int_0^t \sigma(s, X_s, [G\mu_s](U))\mathrm{d}W^{\pi, \mu, \beta}_s.
\end{align*}
The representative agent seeks for an optimal mutual holding strategy by maximizing her criterion
$$
J_{\pi, \mu}(\beta) := \mathbb{E}^{\mathbb{P}^\beta_{\pi, \mu}}[g(X_T)] \quad \text{over all $\beta \in \mathcal{A}$},
$$
where $g: \mathbb{R} \rightarrow \mathbb{R}$ is a non-decreasing utility function.
\begin{definition}
    A pair $(\pi, \mu) \in \mathcal{A} \times \mathcal{P}_\mathcal{S}$ is a $G$-equilibrium (or a graphon equilibrium when $G$ is understood) of the mutual holding problem if $\mu \in \mathcal{P}_\mathcal{S}(\pi)$ and $J_{\pi, \mu}(\pi) = \sup_\beta J_{\pi, \mu}(\beta)$.
\end{definition}

\subsection{Explicit solution and well-posedness of the SDE}
\begin{proposition}
    For the given $\pi^*(t,u,x,v,y) := \mathbf{1}_{\{B^\mu(t,v,y) \geq 0\}}$, the limiting dynamics \eqref{E:eqn:53} take the simpler form:
    \begin{equation}
        \mathrm{d}X_t = B(t,U,X_t, \mu_t)\mathrm{d}t + \Sigma(t,U, X_t, \mu_t)\mathrm{d}W_t, \quad  \mu_t =\mathcal{L}(U,X_t)
        \label{E:eqn:6}
    \end{equation}
    with $B, \Sigma: [0,T] \times I \times \mathbb{R} \times \mathcal{P}^2 (I \times \mathbb{R}) \rightarrow \mathbb{R}$ being defined as
    \begin{align}
        B^\mu(t,u,x) & := \frac{1}{2}\Big(b(t,x,[G\mu](u)) + c(t, \mu)\Big)^+ - \Big(b(t,x,[G\mu](u)) + c(t, \mu)\Big)^-, \label{E:eqn:3} \\
        \Sigma^\mu(t,u,x) &:= \left(1-\frac{1}{2}\mathbf{1}_{\{B^\mu(t,u,x)\geq 0\}}\right) \sigma(t,x,[G\mu](u)), \label{E:eqn:4}
    \end{align}
    where $c: [0,T] \times \mathcal{P}^2(I \times \mathbb{R}) \rightarrow \mathbb{R}_+$ is the unique solution of the integral equation 
    \begin{equation}
        c(t, \mu) = \frac{1}{2} \int \Big(c(t,\mu)+b(t,y,[G\mu](v))\Big)^+ \mu(\mathrm{d}v,\mathrm{d}y).
        \label{E:eqn:52}
    \end{equation}
\end{proposition}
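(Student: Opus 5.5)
The plan is to substitute $\pi^*(t,u,x,v,y)=\mathbf{1}_{\{B^\mu(t,v,y)\ge 0\}}$ into the fixed-point identities \eqref{E:eqn:1}--\eqref{E:eqn:2} and solve them explicitly. Since $\pi^*$ does not depend on $(u,x)$, the two integrals that appear simplify. Fix $t$ and $\mu$, and introduce the scalars
\[
c:=\int \pi^*(t,u,x,v,y)B^\mu(t,v,y)\,\mu(\mathrm{d}v,\mathrm{d}y)=\int \big(B^\mu(t,v,y)\big)^+\mu(\mathrm{d}v,\mathrm{d}y),
\qquad
\int \pi^*(t,v,y,u,x)\,\mu(\mathrm{d}v,\mathrm{d}y)=\mathbf{1}_{\{B^\mu(t,u,x)\ge 0\}}.
\]
The second identity uses that $\mu$ has total mass one and that $\pi^*(t,v,y,u,x)$ depends only on $(u,x)$. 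Then \eqref{E:eqn:1} becomes the pointwise relation
\[
B^\mu(t,u,x)\big(1+\mathbf{1}_{\{B^\mu(t,u,x)\ge 0\}}\big)=b(t,x,[G\mu](u))+c .
\]

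First solve this scalar relation. Write $z=b+c$. If $B\ge 0$ we get $2B=z$, which forces $z\ge0$ and gives $B=z^+/2$. If $B<0$ we get $B=z$ with $z<0$, so $B=-z^-$. The two cases are mutually consistent and exhaustive, since the sign of $B$ equals the sign of $z$ (with $z=0$ giving $B=0$). This yields \eqref{E:eqn:3}, namely $B^\mu=\tfrac12 z^+-z^-$, and plugging the indicator into \eqref{E:eqn:2} gives \eqref{E:eqn:4} directly. Finally, $(B^\mu)^+=\tfrac12 (b+c)^+$, so the definition of $c$ turns into the integral equation \eqref{E:eqn:52}. Conversely, if $c$ solves \eqref{E:eqn:52} and $B^\mu$ is defined by \eqref{E:eqn:3}, then \eqref{E:eqn:1} holds. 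Substituting these coefficients into \eqref{E:eqn:53} then gives \eqref{E:eqn:6}.

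The main work is existence and uniqueness of a nonnegative solution $c$ of \eqref{E:eqn:52}. Let
\[
F(c):=c-\tfrac12\int \big(c+b(t,y,[G\mu](v))\big)^+\mu(\mathrm{d}v,\mathrm{d}y).
\]
This is well defined provided $b$ has linear growth in $y$ and $\mu\in\mathcal P^2$, which I will take from the standing assumptions on $b$. The map $c\mapsto \tfrac12\int(c+b)^+\,\mathrm{d}\mu$ is nondecreasing and $\tfrac12$-Lipschitz, because $\mu$ is a probability measure. Hence $F$ is continuous and strictly increasing, with $F(c)-F(c')\ge \tfrac12(c-c')$ for $c\ge c'$. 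We have $F(0)=-\tfrac12\int b^+\,\mathrm{d}\mu\le 0$ and $F(c)\to+\infty$ as $c\to\infty$. The intermediate value theorem therefore gives a unique root, and it lies in $\mathbb{R}_+$. Equivalently, the map is a contraction with constant $\tfrac12$ on $\mathbb{R}_+$, so Banach's fixed point theorem applies.

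The only delicate point is measurability of $t\mapsto c(t,\mu)$ and of the resulting $B$ and $\Sigma$. This follows because $c$ is the monotone limit of the Picard iterates starting from $0$, and each iterate is measurable in $t$.
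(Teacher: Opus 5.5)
Your proposal is correct and follows essentially the paper's route: substitute $\pi^*$ into \eqref{E:eqn:1}--\eqref{E:eqn:2}, reduce everything to the scalar $\int (B^\mu)^+\,\mathrm{d}\mu$, and identify that scalar with the unique solution of \eqref{E:eqn:52}. Your ordering is slightly cleaner, since you solve $B(1+\mathbf{1}_{\{B\ge 0\}})=b+c$ pointwise before integrating, whereas the paper integrates first and invokes $\{B^\mu\ge 0\}=\{b+c(t,\mu)\ge 0\}$, which tacitly presumes $\int (B^\mu)^+\,\mathrm{d}\mu=c(t,\mu)$; your monotonicity/contraction argument also supplies the existence and uniqueness of $c$, which the paper only cites from Djete--Touzi.
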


\begin{proof}
To verify \eqref{E:eqn:3}-\eqref{E:eqn:4}, plugging the expression for $\pi^*$ into \eqref{E:eqn:1}, we get 
\begin{equation}
    B^\mu(t,u,x) = \frac{b(t,x,[G\mu](u)) + \int (B^\mu)^+ (t,v,y)\mu(\mathrm{d}v, \mathrm{d}y)}{1+\mathbf{1}_{\{B^\mu(t,u,x) \geq 0\}}}.
    \label{E:eqn:5}
\end{equation}
Multiplying both sides by $\mathbf{1}_{\{B^\mu(t,u,x) \geq 0\}}$ gives
$$
2 (B^\mu)^+(t,u,x) = \mathbf{1}_{\{B^\mu(t,u,x) \geq 0\}} \left(b(t,x,[G\mu](u)) + \int (B^\mu)^+ (t,v,y)\mu(\mathrm{d}v, \mathrm{d}y)\right),
$$
which implies by integration with respect to $\mu$ and the fact that $\{(u,x): B^\mu(t,u,x) \geq 0\} = \{(u,x): b(t,x,[G\mu](u))+c(t,\mu) \geq 0\}$,
\begin{align*}
    & \int (B^\mu)^+ (t,v,y)\mu(\mathrm{d}v, \mathrm{d}y) \\
    & \quad = \frac{\int \Big(b(t,x,[G\mu](u))+c(t,\mu)\Big)^+\mu(\mathrm{d}u, \mathrm{d}x) - c(t,\mu)\int \mathbf{1}_{\{B^\mu(t,u,x)\geq 0\}}\mu(\mathrm{d}u, \mathrm{d}x)}{2-\int \mathbf{1}_{\{B^\mu(t,u,x) \geq 0\}}\mu(\mathrm{d}u, \mathrm{d}x)} \\
    & \quad = c(t, \mu),
\end{align*}
where we used the definition of $c$ for the last equality. Substituting this into \eqref{E:eqn:5}, we get \eqref{E:eqn:3}, and this leads to the graphon equilibrium mean-field SDE \eqref{E:eqn:6}. We postpone the justification of the unique existence of $c$ in the general case to the next lemma.
\end{proof}

\begin{lemma}
    Let $\varphi: \mathbb{R} \times \mathcal{M}^2(\mathbb{R}) \rightarrow \mathbb{R}$ be a function such that $\varphi(\cdot, [G \mu] (\cdot))$ is $\mu$-integrable for any $\mu \in \mathcal{P}([0,1] \times \mathbb{R})$. Then, there exists a unique
    $$
    c^\varphi: \mathcal{P}([0,1] \times \mathbb{R}) \rightarrow \mathbb{R}_+ \text{ s.t. } c^\varphi = \frac{1}{2} \int \big(c^\varphi + \varphi(x, [G \mu](u))\big)^+ \mu(\mathrm{d}u, \mathrm{d}x).
    $$
    Moreover, $c^\varphi \leq \int \varphi^+ (x, [G \mu](u)) \mu(\mathrm{d}u, \mathrm{d}x)$.
\end{lemma}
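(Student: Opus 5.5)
Fix $\mu \in \mathcal{P}([0,1]\times\mathbb{R})$ and write $\phi(u,x) := \varphi(x,[G\mu](u))$, which is $\mu$-integrable by assumption. The plan is to treat the defining relation as a scalar fixed-point problem in $c \geq 0$ and use monotonicity. Define
\[
F(c) := c - \frac{1}{2}\int \big(c + \phi(u,x)\big)^+ \mu(\mathrm{d}u,\mathrm{d}x), \qquad c \in \mathbb{R}_+ .
\]
Since $(c+\phi)^+ \leq c + \phi^+$ and $\phi$ is integrable, $F$ is finite everywhere. Finding $c^\varphi$ then amounts to showing that $F$ has exactly one zero on $\mathbb{R}_+$.

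For \emph{existence}, note first that $c \mapsto (c+\phi)^+$ is $1$-Lipschitz and nondecreasing pointwise. Hence $F$ is continuous, by dominated convergence with dominating function $|c| + |\phi|$ on bounded sets of $c$. At the left endpoint, $F(0) = -\frac12\int \phi^+\,\mathrm{d}\mu \leq 0$. At the point $c_1 := \int \phi^+\,\mathrm{d}\mu$, using $(c_1+\phi)^+ \leq c_1 + \phi^+$ and $\mu$ being a probability measure, we get
\[
F(c_1) \geq c_1 - \tfrac12 (c_1 + c_1) = 0 .
\]
The intermediate value theorem then gives a zero $c^\varphi \in [0, c_1]$. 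This location of the zero also yields the \emph{bound} $c^\varphi \leq \int \varphi^+(x,[G\mu](u))\,\mu(\mathrm{d}u,\mathrm{d}x)$, once uniqueness is settled.

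For \emph{uniqueness}, which is the main point, use the Lipschitz property. For $c < c'$ the pointwise estimate $0 \leq (c'+\phi)^+ - (c+\phi)^+ \leq c'-c$ gives
\[
F(c') - F(c) \geq (c'-c) - \tfrac12 (c'-c) = \tfrac12 (c'-c) > 0 ,
\]
so $F$ is strictly increasing and its zero is unique. This also shows that any zero must lie in $[0,c_1]$, since $F(c_1) \geq 0$, so the bound holds for the unique solution. Equivalently, $c \mapsto \frac12\int (c+\phi)^+\,\mathrm{d}\mu$ is a $\frac12$-contraction on $\mathbb{R}_+$, so Banach's fixed point theorem gives existence and uniqueness at once.

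The only delicate step is the bookkeeping that makes $F$ finite and continuous, namely integrability of $\phi^+$, which the hypothesis supplies. No regularity of $c^\varphi$ in $\mu$ is claimed, so the construction is done pointwise for each $\mu$. This defines the map $c^\varphi$, and applying it with $\varphi = b(t,\cdot,\cdot)$ gives the function $c(t,\mu)$ in \eqref{E:eqn:52}.
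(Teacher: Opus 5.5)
Your proof is correct, and it is essentially the argument the paper relies on: the paper omits its own proof and defers to Lemma 4.8 of \cite{DjeteTouzi2024}, which uses the same scalar monotonicity argument. There, as in your proof, $c\mapsto c-\frac12\int (c+\phi)^+\,\mathrm{d}\mu$ is continuous, nonpositive at $0$, nonnegative at $\int\phi^+\,\mathrm{d}\mu$, and increases at rate at least $\frac12$ (equivalently, the right-hand side is a $\frac12$-contraction on $\mathbb{R}_+$); as a small shortcut, the bound also follows directly from the fixed-point identity, since $c^\varphi\le\frac12\big(c^\varphi+\int\phi^+\,\mathrm{d}\mu\big)$.
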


\begin{proof}
Similar to the proof of Lemma 4.8 in \cite{DjeteTouzi2024}, and hence omitted.     
\end{proof}

Before stating the main result of this section, i.e. Theorem \ref{E:thm:1}, we impose some conditions on the coefficients of the SDE defining the dynamics of the idiosyncratic risk process.

\begin{assumption} The coefficients $b$, $\sigma$ are bounded continuous functions, Lipschitz in $(x, m)$ uniformly in $t$ in the sense that there exists $C > 0$ s.t. for any $x, x^\prime \in \mathbb{R}$ and $m, m^\prime \in \mathcal{M}^2(\mathbb{R})$:
    \begin{equation}
        \sup_{t \in [0,T]} \left|(b, \sigma)(t,x,m) - (b, \sigma)(t,x^\prime, m^\prime)\right| \leq C\left(\left|x-x^\prime\right| + \mathcal{WOP}_2(m, m^\prime)\right),
        \label{E:eqn:54}
    \end{equation}
    and $\sigma$ is bounded from below away from zero uniformly in $(t,x,m)$. Moreover,
    \begin{itemize}
        \item[\textup{(i)}] either for each $m \in \mathcal{M}(\mathbb{R})$, $b(t,x,m) < 0$, for a.e. $(t,x) \in [0, T] \times \mathbb{R}$, 

        \item[\textup{(ii)}] or for any $\mu \in \mathcal{P}([0,1] \times \mathbb{R})$ and $\eta \in [0, \infty)$, and for Lebesgue a.e. $t \in [0,T]$, the Borel set
        \begin{align*}
            \ell (t,\mu, \eta) := & \big\{ (u,x) \in I \times \mathbb{R}: (u^\prime, x^\prime) \mapsto \mathbf{1}_{\{b(t, x^\prime, [G\mu](u^\prime)) + \eta \geq 0\}} \text{ continuous at $(t,x,[G\mu](u), \eta)$}\big\}
        \end{align*}
        has full Lebesgue measure, i.e. its complement is Lebesgue-negligible.
    \end{itemize}
    \label{E:assump:1}
\end{assumption}

\begin{remark}
    1. If $\mu, \mu^\prime \in \mathcal{P}^2(\mathbb{R})$, then $\mathcal{WOP}_2 (\mu, \mu^\prime) = \mathcal{W}_2 (\mu, \mu^\prime)$, or more generally, $\mu, \mu^\prime \in \mathcal{M}^2(\mathbb{R})$ with the same total mass $m_0$, then $\mathcal{WOP}_2 (\mu, \mu^\prime) = m_0 \mathcal{W}_2(\overline{\mu}, \overline{\mu}^\prime)$. 

    \vspace{1mm}
    
    \noindent 2. The condition \eqref{E:eqn:54} is natural and satisfied by many examples. For example, let 
    $$
    \mathcal{M}^2(\mathbb{R}) \ni \mu \mapsto b (\mu) := \int_\mathbb{R} \big[m_\mu (x -x_0) +x_0\big] \overline{\mu} (\mathrm{d}x) \in \mathbb{R}, 
    $$
    where $m_\mu$ denotes the total mass of $\mu$ as usual. For any $\mu, \nu \in \mathcal{M}^2(\mathbb{R})$, we calculate
    \begin{align*}
        |b(\mu) - b(\nu)| & = \left|\int_\mathbb{R} \big[m_\mu (x-x_0)+x_0\big] \overline{\mu} (\mathrm{d}x) - \int_\mathbb{R} \big[m_\nu (x-x_0) + x_0\big] \overline{\nu} (\mathrm{d}x) \right|  \\
        & = \left|\int_\mathbb{R} x\left( T_{m_\mu}\# \overline{\mu}\right) (\mathrm{d}x) - \int_\mathbb{R} x \left(T_{m_\nu}\# \overline{\nu}\right) (\mathrm{d}x) \right| \\
        & \leq \mathcal{W}_2 (T_{m_\mu}\# \overline{\mu}, T_{m_\nu}\# \overline{\nu} ) \\
        & \leq \mathcal{WOP}_2 (T_{m_\mu}\# \overline{\mu}, T_{m_\nu}\# \overline{\nu} ),
    \end{align*}
    where we used of the definition of $\mathcal{WOP}_2$ metric for the last inequality. 
    
    Another simple example is that $b$ depends on the measure argument totally through its total mass, i.e. $b (\mu) = m_\mu$. Then,
    \begin{equation*}
        |b(\mu) -b(\nu)| = |m_\mu - m_\nu| \leq \mathcal{WOP}_2(\mu,\nu), \text{ for any $\mu,\nu \in \mathcal{M}^2(\mathbb{R})$}.
    \end{equation*}
\end{remark}

\begin{theorem}
    Let Assumption \ref{E:assump:1} hold true. In addition, suppose that $G$ is continuous on $[0,1]^2$, and $\mu_0 \in \mathcal{P}^p_{\textup{Unif}}([0,1] \times \mathbb{R})$ with $p >2$. Then, we have
    \begin{itemize}
        \item[\textup{(i)}] The mean-field SDE \eqref{E:eqn:6} has at least one square-integrable weak solution.

        \item[\textup{(ii)}] If $g$ is non-decreasing and Lipschitz, any weak solution distribution $\mu$ of \eqref{E:eqn:6} induces a $G$-equilibrium $(\pi^*, \mu)$ of the mutual holding problem, where $\pi^* (t,u,x,v,y) := \mathbf{1}_{\{B^\mu(t,v,y)\geq 0\}}$.
    \end{itemize}
    \label{E:thm:1}
\end{theorem}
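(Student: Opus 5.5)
For part (i), I will follow the existence strategy of Djete and Touzi \cite{DjeteTouzi2024}, adapted to the label--state space $I\times\mathbb{R}$: a Schauder-type fixed point (or, equivalently, a tightness-and-stability argument) on the set of laws of $(U,X)$. First I mollify the discontinuous drift. For $\varepsilon>0$ I replace the indicator in \eqref{E:eqn:4} and the map $z\mapsto \frac12 z^+-z^-$ in \eqref{E:eqn:3} by Lipschitz approximations. The resulting coefficients $B_\varepsilon,\Sigma_\varepsilon$ are bounded, $\Sigma_\varepsilon$ is bounded below by $\frac12\inf\sigma>0$, and they are continuous in $(u,x,\mu)$. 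Continuity in $u$ comes from the continuity of $G$ on $[0,1]^2$, which makes $u\mapsto[G\mu](u)$ continuous for $\mathcal{WOP}_2$; continuity in $\mu$ requires checking that $\mu\mapsto [G\mu](u)$ is $\mathcal{WOP}_2$-continuous uniformly in $u$, and that $\mu\mapsto c(t,\mu)$ is continuous.

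For $c$, I use the fixed-point characterization \eqref{E:eqn:52}. Writing $F(c,\mu)=c-\frac12\int (c+b)^+\,\mathrm{d}\mu$, this map is strictly increasing in $c$ with slope at least $\frac12$, and it is continuous in $\mu$ under weak convergence because $b$ is bounded and continuous. An implicit-function/monotonicity argument then gives continuity of $c$. With these continuity properties, standard McKean--Vlasov existence for bounded continuous coefficients with nondegenerate diffusion gives a solution $\mu^\varepsilon$. Here $U$ is a frozen random variable with uniform law, and the initial law is $\mu_0$.

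Next I show tightness of $(\mu^\varepsilon)$ on $I\times\mathcal{C}$. The coefficients are bounded, so Kolmogorov's criterion applies, and the $p$-th moment of $\mu_0$ with $p>2$ gives uniform square integrability, so limits live in $\mathcal{P}^2$ and $\mathcal{W}_2$-type convergence of the marginals holds. I pass to the limit in the martingale problem. The key point is convergence of $\int_0^t \mathbb{E}[\phi'(X^\varepsilon_s)B_\varepsilon(s,U,X^\varepsilon_s,\mu^\varepsilon_s)]\,\mathrm{d}s$, and similarly for $\Sigma^2$.

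This is the main obstacle, because the limiting $B,\Sigma$ jump on the set $\{b+c=0\}$. Under case (i) of Assumption \ref{E:assump:1}, $b<0$ gives $c\equiv 0$ and $B=b$, which is continuous, so there is no discontinuity. Under case (ii), I will use that nondegeneracy of the diffusion, via a Krylov-type estimate, makes the time marginals of the approximating and limit laws absolutely continuous in $x$; the label marginal is uniform. Hence the limit charges the discontinuity set (the complement of $\ell(t,\mu,c(t,\mu))$) with zero mass. A generalized continuous-mapping theorem then handles the indicator, combined with the continuity of $c(t,\cdot)$ and the uniform convergence $[G\mu^\varepsilon_t](u)\to[G\mu_t](u)$. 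Carrying this out requires the $\mathcal{WOP}_2$ estimate $\mathcal{WOP}_2([G\mu](u),[G\mu'](u))\lesssim \mathcal{W}_2(\mu,\mu')+\omega_G(\cdot)$, built from \eqref{E:eqn:28} and the explicit formula for $\mathcal{WOP}_2$.

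For part (ii), I argue by comparison, as in \cite{DjeteTouzi2024}. Fix a solution $\mu$ and $\beta\in\mathcal{A}$. Under $\mathbb{P}^\beta_{\pi^*,\mu}$, dividing by $1+\int\pi^*(\cdot,U,X)\,\mathrm{d}\mu$ (which depends only on the sign of $B^\mu$ at $(U,X)$), the drift of $X$ is
$$\frac{b+\int\beta B^\mu\,\mathrm{d}\mu}{1+\mathbf{1}_{\{B^\mu\ge0\}}}\le \frac{b+\int (B^\mu)^+\,\mathrm{d}\mu}{1+\mathbf{1}_{\{B^\mu\ge0\}}}=B^\mu,$$
using $0\le\beta\le1$ (admissible holdings bounded in $[0,1]$) and the proof of the Proposition; the diffusion coefficient is the same. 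I then compare the solution of the $\beta$-dynamics with that of the $\pi^*$-dynamics. Both are driven by the same Brownian motion after the Girsanov change, and I will represent the value via the BSDE $Y_t=g(X_T)-\int_t^T Z_s\,\mathrm{d}W_s$. Since $g$ is nondecreasing and Lipschitz, $Z\ge 0$ up to the Malliavin/comparison argument, which gives $J_{\pi^*,\mu}(\beta)\le J_{\pi^*,\mu}(\pi^*)$. More directly, I write $J(\beta)-J(\pi^*)=\mathbb{E}\big[\int_0^T Z_s(\psi_s)\,\mathrm{d}s\big]$ with $\psi\le0$, and check the sign of $Z$ through monotonicity of the flow $x\mapsto X$ in a one-dimensional SDE with irregular drift. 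That monotonicity I obtain by approximating the drift as in part (i).
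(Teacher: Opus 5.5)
\textbf{Part (i): a valid alternative route.} Your part (i) is a legitimate variant of the paper's argument, not a copy of it. The paper regularizes nothing. It runs a dyadic Euler scheme with the exact coefficients and gets $\mathcal{W}_2$-compactness from Aldous' criterion and the $p$-th moment bound. It then applies the Portmanteau theorem once, to the fixed map $(\widehat{\sigma}H\circ\gamma)^2$. That map's discontinuity set is negligible for the limit by Lemma \ref{E:lemma:7} (uniform-in-$u$ continuity of $[G\mu](u)$ via equicontinuity plus pointwise convergence, and continuity of $c(t,\cdot)$), combined with Proposition A.1 of \cite{DjeteTouzi2024}, which plays the role of your Krylov estimate.

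Your smoothing-plus-stability route rests on the same two continuity facts and the same absolute-continuity input. It costs you an auxiliary McKean--Vlasov existence theorem that the Euler scheme avoids. Your direct coupling bound on $\mathcal{WOP}_2([G\mu](u),[G\mu'](u))$ is a reasonable substitute for the Arzel\`a--Ascoli step. The masses coincide when both first marginals are uniform, but expect a uniform-integrability remainder rather than a clean $\mathcal{W}_2+\omega_G$ bound.

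One correction: only $\Sigma^\mu$ jumps. Since $z\mapsto\frac12 z^+-z^-$ is Lipschitz, $B^\mu$ is Lipschitz in $x$ and needs no smoothing. Even in case (i), $\Sigma^\mu$ keeps the factor $\mathbf{1}_{\{b=0\}}$; this is harmless because it lives on a Lebesgue-null set.

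\textbf{Part (ii): the gap.} The genuine gap is in (ii), at its only nontrivial step. Your identity
$J_{\pi^*,\mu}(\beta)-J_{\pi^*,\mu}(\pi^*)=\mathbb{E}^{\mathbb{P}^\beta_{\pi^*,\mu}}\big[\int_0^T\overline{Z}_t\Sigma^\mu_t\psi_t\,\mathrm{d}t\big]$, with $\Sigma^\mu_t\psi_t=\frac{\int(\beta-\pi^*)B^\mu\,\mathrm{d}\mu_t}{1+\mathbf{1}_{\{B^\mu(t,U,X_t)\ge 0\}}}\le 0$, is exactly the paper's final step. So everything hinges on $\overline{Z}\ge0$ for $\overline{Y}_t=\mathbb{E}^{\mathbb{P}}[g(X_T)\mid\mathcal{F}_t]$.

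You plan to get this from a monotone flow of ``a one-dimensional SDE with irregular drift'', obtained by ``approximating the drift''. But the drift $B^\mu$ is Lipschitz in $x$. What is irregular is the diffusion $\Sigma^\mu=(1-\frac12\mathbf{1}_{\{b+c\ge0\}})\sigma$, and since $\{b+c\ge0\}$ is merely closed, $\Sigma^\mu(t,u,\cdot)$ need not even have bounded variation. Consequently:
\begin{itemize}
\item pathwise uniqueness criteria (Yamada--Watanabe, Nakao, Le Gall) do not apply, so there is no stochastic flow $x\mapsto X^{t,x}$ whose monotonicity you can invoke;
\item the comparison of the $\beta$- and $\pi^*$-dynamics ``driven by the same Brownian motion'' cannot be carried out;
\item smoothing the drift changes nothing.
\end{itemize}

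If you smooth $\Sigma^\mu$ instead, one-dimensional comparison (or the paper's Clark--Ocone argument) does give $\overline{Z}^\varepsilon\ge0$ for the approximations. However, the compactness argument of part (i) only produces \emph{some} limit, whereas here $\mu$ is an arbitrary \emph{given} solution.

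\textbf{What is missing.} Two ingredients are needed to close this.
\begin{itemize}
\item Uniqueness in law for the frozen-$\mu$ equation $\mathrm{d}X_t=B^\mu(t,U,X_t)\,\mathrm{d}t+\Sigma^\mu(t,U,X_t)\,\mathrm{d}W_t$. This does hold: conditionally on $U$ the equation is one-dimensional and uniformly elliptic with bounded measurable coefficients. Together with Krylov's estimate, it identifies the limit of the smoothed equations as the given solution.
\item A way to carry the sign through the weak limit. For example, for bounded continuous $\Phi=\Phi(U,X_{s\wedge\cdot})\ge0$,
$\mathbb{E}\big[\Phi\,g(X_T)\big(X_t-X_s-\int_s^tB^\mu(r,U,X_r)\,\mathrm{d}r\big)\big]=\mathbb{E}\big[\Phi\int_s^t\overline{Z}_r(\Sigma^\mu_r)^2\,\mathrm{d}r\big]$.
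The left side is a continuous path functional because $B^\mu$ is continuous in $x$, so its nonnegativity passes to the limit.
\end{itemize}

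For comparison, the paper obtains $\overline{Z}\ge0$ from Clark--Ocone and positivity of the tangent process $Y^{2,2}$, but only after assuming $\Sigma^\mu$ is differentiable in $(u,x)$, and it refers the general case to Lemma 6.1 of \cite{DjeteTouzi2024}. In either route, this approximation-and-identification step is where the proof actually lives, and your proposal aims it at the wrong coefficient.
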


\begin{remark}
    In \cite{DjeteGuo2023}, Guo et al. proved in their setting that there exists a unique strong solution to the equilibrium McKean-Vlasov SDE when $b$ has a constant sign and $b, \sigma$ are independent of the distribution variable. However, in our setting the heterogeneous interaction between agents, defined as the positive finite measures, depends on the measure argument, hence it is not clear that  their techniques is applicable to the current context.
\end{remark}

\subsection{Propagation of Chaos}
Let $(\Omega, \mathcal{F}, \mathbb{F}, \mathbb{P})$ be a filtered probability space supporting a sequence of independent Brownian motions $(W^i)_{1 \leq i \leq N}$, and denote by $\mathbf{W} := (W^1, \cdots, W^N)$. Let $\Gamma := (\gamma^{i,j})_{1 \leq i,j \leq N}$ be a $[0,1]^{N \times N}$-valued $\mathbb{F}$-predictable process, and $\mathbf{X}:=(X^1, \cdots, X^N)$ the solution of the SDE:
\begin{equation*}
    \mathrm{d} X_t^i = \frac{1}{N} \sum_{j=1}^N \gamma_t^{i,j}\mathrm{d}X_t^j - \frac{1}{N} \sum_{j=1}^N \gamma_t^{j,i}\mathrm{d}X_t^i + b(t,X_t^i, M^{i,N}_t) \mathrm{d}t + \sigma(t,X_t^i, M^{i,N}_t)\mathrm{d}W_t^i
\end{equation*}
with $M^{i,N}_t$ defined in \eqref{E:eqn:16}. Rewriting in a vector form gives:
$$
M(\Gamma_t) \mathrm{d}\mathbf{X}_t = \vec{b} (t, \mathbf{X}_t, \mathbf{M}_t^N) \mathrm{d}t + \text{diag}\left[\vec{\sigma} (t, \mathbf{X}_t, \mathbf{M}^N_t)\right] \mathrm{d}\mathbf{W}_t,
$$
where $M(\Gamma_t)$ is a matrix depending on $\Gamma_t$, and for $\varphi = b \text{ or } \sigma$, we denote by $\vec{\varphi}(t, \mathbf{X}_t, \mathbf{M}_t^N)$ the vector in $\mathbb{R}^N$ with $i$-th entry $\varphi(t, X^i_t, M^{i,N}_t)$, and $\text{diag}[\vec{\varphi}]$ is the diagonal matrix with diagonal elements defined by the entries of the vector $\vec{\varphi}$. $M(\Gamma_t)$ is a strictly diagonally dominant matrix and it is therefore invertible. Consequently, $\mathbf{X}$ is an It\^o process defined by the drift and diffusion coefficients $B=(B^i)_{1 \leq i \leq N}$ and $\Sigma=(\Sigma^{i,j})_{1\leq i,j \leq N}$:
\begin{align}
    \mathrm{d}\mathbf{X}_t = B_t \mathrm{d}t + \Sigma_t \mathrm{d}\mathbf{W}_t, \; & \text{with } B_t = M(\Gamma_t)^{-1} \vec{b} (t, \mathbf{X}_t, \mathbf{M}^N_t) =: \mathbf{B} (t, \Gamma_t, \mathbf{u}, \mathbf{X}_t), \nonumber \\
    & \text{and } \Sigma_t = M(\Gamma_t)^{-1} \text{diag}\left[\vec{\sigma} (t, \mathbf{X}_t, \mathbf{M}^N_t)\right] =:  \mathbf{\Sigma} (t, \Gamma_t, \mathbf{u}, \mathbf{X}_t). \label{E:eqn:17}
\end{align}
For later use, we isolate the equations defining the coefficients $(B^i)_{1 \leq i \leq N}$ and $(\Sigma^{i,j})_{1 \leq i,j \leq N}$:
\begin{align}
    & B^i_t = \frac{1}{N} \sum_{j=1}^N \gamma_t^{i,j} B_t^j - \frac{1}{N} \sum_{j=1}^N \gamma_t^{j,i} B_t^i + b(t, X^i_t, M^{i,N}_t), \label{E:eqn:23}\\
    & \Sigma^{i,q}_t = \frac{1}{N} \sum_{j=1}^N \gamma^{i,j}_t \Sigma^{j,q}_t - \frac{1}{N} \sum_{j=1}^N \gamma^{j,i}_t \Sigma^{i,q}_t + \sigma(t, X_t^i, M^{i,N}_t) \mathbf{1}_{q=i} 
    \label{E:eqn:24}.
\end{align}

\subsubsection{Deviating player} 
For any $[0, 1]^N$-valued $\mathbb{F}$-predictable process $\beta^i := (\beta^{i,1}, \cdots, \beta^{i,N})^\top$, we introduce the deviated matrix strategy defined by substituting $(\beta^i)^\top$ to the $i$-th line of $\Gamma$:
$$
\Gamma^{-i} (\beta^i) := \left(\left(\gamma^{1,\cdot}\right)^\top, \cdots, \left(\gamma^{i-1, \cdot}\right)^\top, \beta^i, \left(\gamma^{i+1, \cdot}\right)^\top, \cdots, \left(\gamma^{N, \cdot}\right)^\top\right)^\top \text{ where } \gamma^{i, \cdot} := (\gamma^{i,j})_{1 \leq j \leq N}.
$$
Similar to the mean-field setting, we introduce an equivalent probability measure $\mathbb{P}^i_{\Gamma, \beta^i}$ defined by the Radon-Nykodim density:
$$
\frac{\mathrm{d} \mathbb{P}^i_{\Gamma, \beta^i}}{\mathrm{d}\mathbb{P}} := Z^i_T := \exp{\left(\int_0^T \psi^i_t \mathrm{d}W^i_t - \frac{1}{2}|\psi^i_t|^2\mathrm{d}t\right)}, \quad \text{on } \mathcal{F}_T
$$
with
$$
\psi^i_t := \frac{\mathbf{B}^i (t, \Gamma^{-i}_t(\beta^i_t), \mathbf{u},\mathbf{X}_t) - \mathbf{B}^i(t, \Gamma_t, \mathbf{u}, \mathbf{X}_t)}{\mathbf{\Sigma}^{i,i}(t, \Gamma^{-i}_t (\beta^i_t), \mathbf{u}, \mathbf{X}_t)},
$$
and we define $\widehat{W}^{i,i}_\cdot := W^i_\cdot - \int_0^\cdot \psi^i_s \mathrm{d}s$. Then, it follows from the Girsanov theorem that the process $\widehat{\mathbf{W}}^i := (W^1, \cdots, W^{i-1}, \widehat{W}^{i,i}, W^{i+1}, \cdots, W^N)$ is a $\mathbb{P}^i_{\Gamma, \beta^i}$-Brownian motion.

We now introduce $\mathbf{X}^i := (X^{i,1}, \cdots, X^{i,N})$ defined by the stochastic differential equation:
$$
\mathrm{d}\mathbf{X}_t^i = \mathbf{B}(t, \Gamma_t^{-i}(\beta^i_t), \mathbf{u}, \mathbf{X}^i_t) \mathrm{d}t + \mathbf{\Sigma}(t, \Gamma_t^{-i}(\beta^i_t), \mathbf{u}, \mathbf{X}^i_t) \mathrm{d}\widehat{\mathbf{W}}^i_t,
$$
where the functions $\mathbf{B}$ and $\mathbf{\Sigma}$ are defined in \eqref{E:eqn:17}. Given $\Gamma$, notice that $\mathbf{X}^i$ is well-defined since the maps $\mathbf{B}$ and $\mathbf{\Sigma}$ are Lipschitz in $\mathbf{x}$ (see Lemma \ref{E:lemma:2} below).

Consequently, we may define the reward of the $i$-th deviated player from using the strategy $\beta^i$, given that the remaining agents stick to their strategies $\gamma^{k,\cdot}$, $k \neq i$, by
$$
J_i (\Gamma^{-i}(\beta^i)) := \mathbb{E}^{\mathbb{P}^i_{\Gamma, \beta^i}} \big[g(X_T^{i,i})\big], \quad \text{for } i =1, \ldots, N.
$$

\begin{definition}
    (Approximate Nash equilibrium). For $\boldsymbol{\epsilon}:=(\epsilon^1, \cdots, \epsilon^N) \in [0, \infty)^N$, we say that $\Gamma$ is an $\boldsymbol{\epsilon}$-Nash equilibrium if:
    $$
    J_i (\Gamma) \geq \sup_\beta J_i(\Gamma^{-i}(\beta)) - \epsilon^i, \quad \text{for all $i \in \{1, \cdots, N\}$}.
    $$
    \label{E:def:1}
\end{definition}

\begin{assumption}
    For some $p > 2$, the initial law $\mu_0 \in \mathcal{P}_{\textup{Unif}}^p ([0,1] \times \mathbb{R})$ admits $\mu_0(\mathrm{d}u, \mathrm{d}x) = \mu_{u,0}(\mathrm{d}x)\mathrm{d}u$, uniquely defined up to a.e. equality, and satisfies
    $
    \sup_{u \in I} \int_\mathbb{R} |x|^p \mu_{u,0}(\mathrm{d}x) < \infty.
    $
    \label{E:assump:2}
\end{assumption}

\subsubsection{Construction of approximate Nash equilibrium} 
Suppose $\mathbf{u} := (u^1, \cdots, u^N) \in I^N_1 \times \cdots \times I^N_N$ represents the labels of $N$ players, and define for $\mathbf{x} := (x^1, \cdots, x^N) \in \mathbb{R}^N$:
$$
m^N := \frac{1}{N} \sum_{j=1}^N \delta_{(u^j,x^j)}, \quad \text{and } m^{i,N} := [G^N m^N](u^i) = \frac{1}{N}\sum_{j=1}^N \xi^N_{ij} \delta_{x^j}.
$$
Denote $\overline{\mathbf{x}} := (\mathbf{u}, \mathbf{x})$ and define for $i,j = 1, \cdots, N$:
\begin{equation}
\left\{
\begin{aligned}
    & \pi (t,u^j,x^j, m^N) := \pi^j (t, \overline{\mathbf{x}}) := \mathbf{1}_{\{B(t,u^j,x^j, m^N) \geq 0\}} \\
    & B^i_t (\overline{\mathbf{x}}) = \frac{\frac{1}{N}\sum_{k=1}^N A^k_t( \overline{\mathbf{x}}) b(t,x^k,m^{k,N}) + b(t,x^i, m^{i,N})}{1+\pi^i(t, \overline{\mathbf{x}})} \\
    & \Sigma^{i,j}_t (\overline{\mathbf{x}}) = \frac{\frac{1}{N}A^j_t(\overline{\mathbf{x}}) \sigma(t,x^j,m^{j,N}) + \sigma(t,x^i, m^{i,N}) \mathbf{1}_{i=j}}{1+\pi^i(t,\overline{\mathbf{x}})}
\end{aligned}
\right.
\label{E:eqn:8}
\end{equation}
with $B$ given in \eqref{E:eqn:3}, where $A^j_t (\overline{\mathbf{x}})$ is defined as follows:
$$
A^j_t(\overline{\mathbf{x}}) := \frac{\frac{\pi^j(t,\overline{\mathbf{x}})}{1+\pi^j(t, \overline{\mathbf{x}})}}{1-\frac{1}{N}\sum_{k=1}^N\frac{\pi^k(t, \overline{\mathbf{x}})}{1+\pi^k(t, \overline{\mathbf{x}})}}.
$$

By Theorem 2.6.1 in \cite{Krylov1980}, there exists a weak solution $\mathbf{X} := (X^1, \cdots, X^N)$ to the stochastic differential equation:
\begin{align*}
    \mathrm{d} X_t^i & = \frac{1}{N} \sum_{j=1}^N \pi_t^{i,j}\mathrm{d}X_t^j - \frac{1}{N} \sum_{j=1}^N \pi_t^{j,i}\mathrm{d}X_t^i + b(t,X_t^i, M^{i,N}_t) \mathrm{d}t + \sigma(t,X_t^i, M^{i,N}_t)\mathrm{d}W_t^i \\
    & = B^i(t, \mathbf{u}, \mathbf{X}_t, \mu^N_t) \mathrm{d}t + \sum_{j=1}^N \Sigma^{i,j}(t, \mathbf{u}, \mathbf{X}_t, \mu^N_t) \mathrm{d}W^j_t
\end{align*}
with $\mathcal{L} (X^i_0) = \mu_{u^i,0}$, where the last equality follows by direct verification and we used the notation:
$$
M^{i,N}_t := [G^N \mu^N_t](u^i) = \frac{1}{N}\sum_{j=1}^N \xi^N_{ij} \delta_{X^j_t}, \quad \text{and } \mu^N_t := \frac{1}{N} \sum_{j=1}^N \delta_{(u^j, X_t^j)},
$$
and
\begin{equation}
    \pi^{i,j}_t := \pi^j_t := \pi(t,u^j,X^j_t, \mu^N_t), \quad \Pi^N := (\pi^{i,j})_{1 \leq i, j \leq N} \text{ for all $N \geq 1$}.
    \label{E:eqn:18}
\end{equation}

With the fixed $\mathbf{u}^N:=(u^1, \cdots, u^N)$, we finally define the nonnegative number 
\begin{equation}
    \epsilon^{i,N} (\mathbf{u}^N) := \sup_{\beta} J_i((\Pi^N)^{-i}(\beta)) - J_i(\Pi^N) \geq 0.
    \label{E:eqn:50}
\end{equation}
By Definition \ref{E:def:1}, $\Pi^N$ is an $\boldsymbol{\epsilon}^N (\mathbf{u}^N) := (\epsilon^{1,N}(\mathbf{u}^N), \cdots, \epsilon^{N,N}(\mathbf{u}^N))$-Nash equilibrium. This definition makes sense only if we prespecify a version of the disintegration $u \mapsto \mu_{u,0}$, and otherwise we should understand $\boldsymbol{\epsilon}^N(\mathbf{u}^N)$ to be uniquely defined only up to a.e. equality. When $\mathbf{u}^N$ is known, we simply write $\boldsymbol{\epsilon}^N := (\epsilon^{1,N}, \cdots, \epsilon^{N,N})$.

For any $\mu \in \mathcal{P}_\mathcal{S}(\pi)$, we introduce the set 
$$
\mathcal{S}(\pi, \mu) := \{\mathbb{P}^\beta_{\pi, \mu} \circ (U, X)^{-1}, \; \beta \in \mathcal{A}\},
$$
and the sequence of probability distributions $(\mathrm{P}^N)_{N \in \mathbb{N}^*} \subset \mathcal{P} (\widehat{\Omega} \times \mathcal{P}(\widehat{\Omega}))$
$$
\mathrm{P}^{N} := \frac{1}{N} \sum_{i=1}^N \mathbb{P}^{i}_{\Pi, \beta^i} \circ \big(u^i, X^{i,i}, \mu^{i,N}\big)^{-1}.
$$
We denote by $(\widehat{U}, \widehat{X}, \widehat{\mu})$ the canonical elements on $\widehat{\Omega} \times \mathcal{P}(\widehat{\Omega})$.

\begin{theorem}
    Let Assumptions \ref{E:assump:1} and \ref{E:assump:2} hold true. Moreover, suppose that $G$ is continuous on $[0,1]^2$  and that $G^N$ is a step graphon defined in \eqref{E:eqn:21} satisfying either of the following conditions
    \begin{itemize}
        \item (Sampling kernel) $G^N(u^i, u^j) = G(u^i, u^j)$, for $i,j=1, \cdots,N$,

        \item (General kernel) $N^2 \big\|G^N -G\big\|_\square \rightarrow 0$.
    \end{itemize}
    
    \noindent Then we have that
    \begin{itemize}
        \item[\textup{(i)}] the sequence $(\mathrm{P}^N)_{N \in \mathbb{N}^*}$ is relatively compact in $\mathcal{W}_2$, and for any limit point $\mathrm{P}^\infty$
        $$
        \widehat{\mu} (\omega) \in \mathcal{P}_\mathcal{S}(\pi) \text{ and } \mathcal{L}^{\mathrm{P}^\infty} (\widehat{U}, \widehat{X} | \widehat{\mu}(\omega)) \in \mathcal{S}(\pi, \widehat{\mu}(\omega)), \quad  \mathrm{P}^\infty-a.s. \; \omega.
        $$

        \item[\textup{(ii)}] (Approximate Nash equilibrium) $\frac{1}{N} \sum_{i=1}^N \epsilon^{i,N} \rightarrow 0$.
    \end{itemize}
    \label{E:thm:2}
\end{theorem}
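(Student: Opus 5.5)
The plan follows the Djete--Touzi scheme for the mean field game of mutual holding, adapted to the label--state space and the $\mathcal{WOP}_2$ topology. Throughout, $\mathbb{P}^i_{\Pi,\beta^i}$ denotes the law under a (near-)optimal deviation $\beta^i$ of player $i$, and $\mu^{i,N}$ is the empirical measure of $(u^j,X^{i,j})_j$ in the deviated system.

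\textbf{Step 1: tightness and moment bounds.} Since $b,\sigma$ are bounded and $\pi^j\in\{0,1\}$, the coefficients in \eqref{E:eqn:8} are uniformly bounded. Indeed,
\[
1-\frac1N\sum_k \frac{\pi^k}{1+\pi^k}\ge \frac12,
\]
so $A^j_t\le 1$. The same bounds hold for the deviated system, by the strict diagonal dominance of $M(\Gamma^{-i}(\beta^i))$; this is presumably the content of Lemma \ref{E:lemma:2}. The Girsanov density has bounded $\psi^i$, because $\sigma$ is bounded below. Combined with Assumption \ref{E:assump:2}, which gives uniform $p$-th moments with $p>2$ at time $0$, this yields
\[
\sup_N \frac1N\sum_i \mathbb{E}^{\mathbb{P}^i}\big[\|X^{i,i}\|_T^{p'}+\|\mu^{i,N}\|^{p'}\big]<\infty
\]
for some $p'>2$, together with uniform Kolmogorov/Aldous increment estimates. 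Hence $(\mathrm{P}^N)$ is tight in $\mathcal{P}(\widehat\Omega\times\mathcal{P}(\widehat\Omega))$, and uniform integrability of order $p'>2$ upgrades this to relative compactness in $\mathcal{W}_2$.

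\textbf{Step 2: identification of the limit.} I will use the martingale problem. For $f\in C^2_b$, define the functional
\[
\widehat{\mu}\mapsto \mathbb{E}^{\widehat\mu}\Big[\Big(f(\widehat X_t)-f(\widehat X_s)-\int_s^t \mathcal{L}^{\widehat\mu}_r f\,dr\Big)h(\widehat U,\widehat X_{\cdot\wedge s})\Big],
\]
where $\mathcal{L}^{\widehat\mu}_r$ is the generator with drift $B(r,\widehat U,\cdot,\widehat\mu_r)$ and diffusion $\Sigma$ from \eqref{E:eqn:3}--\eqref{E:eqn:4}. I show that this functional vanishes in the limit.

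The finite-$N$ coefficients are compared with their graphon counterparts through $\mathcal{WOP}_2([G^N\mu^N](u^i),[G\mu^N](u^i))$. For the sampling kernel this term vanishes, because the two graphons agree at the labels. For the general kernel I bound it by the mass and moment differences in the expansion of $\mathcal{WOP}_2$. Each is an average of $(G^N-G)$ against point masses, and it is controlled by $N^2\|G^N-G\|_\square$ up to moments: pick $S=I^N_i$ and $T=I^N_j$ to turn the cut norm into entrywise control of order $N^2\|\cdot\|_\square$, then use the continuity of $G$. The continuity of $G$ on $[0,1]^2$, together with the Lipschitz bound \eqref{E:eqn:54}, then makes $(u,x,\mu)\mapsto b(t,x,[G\mu](u))$ continuous. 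Continuity of $\mu\mapsto c(t,\mu)$ follows from the fixed-point characterisation \eqref{E:eqn:52} and a contraction-type comparison, using the Lemma after the Proposition (the unique-solution lemma for $c^\varphi$).

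The indicator $\mathbf{1}_{\{b+c\ge0\}}$ is discontinuous. I handle it with Assumption \ref{E:assump:1}: under (i) the drift is negative, so $c$ and the indicator degenerate trivially; under (ii) the discontinuity set is negligible. Since the limit marginals have densities (nondegenerate $\sigma$, via Krylov estimates), passing to the limit is justified. The deviation $\beta^i$ enters only through $\psi^i$ with the $i$-th row, and its effect on the other players is $O(1/N)$. So under the limit measure $\widehat\mu$ solves \eqref{E:eqn:6}, i.e.\ $\widehat\mu\in\mathcal{P}_\mathcal{S}(\pi)$. The conditional law of $(\widehat U,\widehat X)$ solves the SDE with an extra bounded drift of the form
\[
\int(\beta-\pi)B\,d\widehat\mu,
\]
with $\beta$ obtained as a limit of relaxed controls. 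This is exactly an element of $\mathcal{S}(\pi,\widehat\mu)$, giving (i).

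\textbf{Step 3: approximate Nash equilibrium.} I take $\beta^i$ to be $1/N$-optimal, so that
\[
\frac1N\sum_i\epsilon^{i,N}\le \frac1N\sum_i J_i\big((\Pi^N)^{-i}(\beta^i)\big)-\frac1N\sum_iJ_i(\Pi^N)+\frac1N.
\]
Here $g$ is Lipschitz and the moments are uniform, so $J$ passes to the limit. Along a subsequence, the first average converges to $\mathbb{E}^{\mathrm{P}^\infty}[g(\widehat X_T)]$, which is at most $\mathbb{E}^{\mathrm{P}^\infty}\big[\sup_\beta J_{\pi,\widehat\mu}(\beta)\big]$. By Theorem \ref{E:thm:1}(ii), this equals $\mathbb{E}\big[J_{\pi,\widehat\mu}(\pi)\big]$, which is the limit of the second average by the same argument with $\beta=\pi$. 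Hence the $\limsup$ is $\le0$. Since this holds for every subsequence, (ii) follows.

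I expect the main obstacle to be Step 2: establishing continuity of $B$ and $c$ along $\mathcal{WOP}_2$-convergent empirical measures despite the indicator, and quantifying the general-kernel error using only $N^2\|G^N-G\|_\square\to0$.
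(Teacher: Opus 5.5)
Steps 1--2 follow the paper's architecture closely: diagonal-dominance bounds, tightness with $p$-th moment uniform integrability, a martingale problem with relaxed controls, continuity of $[G\mu](u)$ and $c(t,\mu)$ as in Lemma~\ref{E:lemma:7}, densities of the limit marginals to neutralise the indicator, and the entrywise cut-norm bound of Lemma~\ref{E:lemma:4}.

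The genuine gap is in Step 3, at ``which is the limit of the second average by the same argument with $\beta=\pi$''. Rerunning Steps 1--2 with $\beta^i=\pi^i$ gives a \emph{different} sequence, $\frac1N\sum_i\mathbb{P}\circ(u^i,X^i,\mu^N)^{-1}$, whose limit points are taken along a further subsequence. Their $\widehat\mu$-marginal is some law on $\mathcal{P}_\mathcal{S}(\pi)$, but nothing you prove forces it to agree with the $\widehat\mu$-marginal of your deviated limit $\mathrm{P}^\infty$. Theorem~\ref{E:thm:1} gives existence but not uniqueness for \eqref{E:eqn:6}, so $\mu\mapsto J_{\pi,\mu}(\pi)=\langle g,\mu_T\rangle$ need not be constant over its solutions. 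Two separate compactness arguments therefore cannot identify $\mathbb{E}^{\mathrm{P}^\infty}[J_{\pi,\widehat\mu}(\pi)]$ with $\lim\frac1N\sum_iJ_i(\Pi^N)=\lim\mathbb{E}^{\mathbb{P}}[\langle g,\mu^N_T\rangle]$.

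The missing ingredient is a coupling: the $\widehat\mu$-marginal of $\mathrm{P}^N$ must be asymptotically close to the $\mathbb{P}$-law of the \emph{undeviated} empirical measure $\mu^N$. The paper gets this by staying under $\mathbb{P}$. It replaces $(X^{i,i},\mu^{i,N})$ by $(X^i,\mu^N)$ via Lemma~\ref{E:lemma:3} and carries $Z^i$ as a state variable. In the limit it has $\widetilde\mu=\mu(\omega)$ and $\mathbb{E}^{\overline\mu(\omega)}[\widetilde Z_T]=1$, so the Girsanov change leaves the law of $\widehat\mu$ untouched. In your notation the fix is short:
\begin{itemize}
\item By Lemma~\ref{E:lemma:3}, Lipschitz $g$ and $\sup_{i,N}\mathbb{E}^{\mathbb{P}}[(Z^i_T)^2]<\infty$, one has $\frac1N\sum_i\mathbb{E}^{\mathbb{P}^i_{\Pi,\beta^i}}[\langle g,\mu^{i,N}_T\rangle]=\mathbb{E}^{\mathbb{P}}[(\frac1N\sum_iZ^i_T)\langle g,\mu^N_T\rangle]+o(1)$.
\item Since $Z^i_T-1=\int_0^TZ^i_t\psi^i_t\,\mathrm{d}W^i_t$ with independent $W^i$, the cross terms vanish and $\mathbb{E}^{\mathbb{P}}[(\frac1N\sum_i(Z^i_T-1))^2]\le C/N$.
\end{itemize}
Only with this does your chain of inequalities close.

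A smaller point in Step 2: ``this is exactly an element of $\mathcal{S}(\pi,\widehat\mu)$'' is too quick. $\mathcal{S}(\pi,\mu)$ is generated by feedback maps $\beta\in\mathcal{A}$ of $(t,U,X_t,v,y)$, while the limit of $q^{i,N}_t$ is a randomised, path-dependent relaxed control. You need two further steps:
\begin{itemize}
\item Use the constraint that the $(u,x)$-marginal of $q$ equals $\widehat\mu_t$ to disintegrate $q$, and use linearity in $\alpha$ to write $\int\alpha B\,\mathrm{d}q=\int\bar\beta B\,\mathrm{d}\widehat\mu_t$.
\item Apply a Markovian projection, which works because the diffusion coefficient depends only on $(t,U,X_t)$. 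It preserves the time marginals, which is all $J$ needs.
\end{itemize}
Working under $\mathbb{P}$ with $Z^i$, as the paper does, also spares you a comparison your direct route would require. The deviated coefficients $\Pi^{-i}(\beta^i)$ evaluate the discontinuous $\pi$ at $(X^j,\mu^N)$, whereas your limiting generator evaluates it at $(X^{i,j},\mu^{i,N})$, and Lipschitz estimates cannot bridge that.
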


\begin{proof}
    The proof of (i) is postponed to Section \ref{E:sec:1}. To prove (ii), for all $i \in \{1, \cdots, N\}$, let $\beta^i \in \mathbb{R}^N$ satisfy
    $
    J_i((\Pi^N)^{-i}(\beta^i)) - J_i(\Pi^N) \geq \epsilon^{i,N} - 2^{-N}.
    $
    By the construction in \eqref{E:eqn:50}, $\frac{1}{N}\sum_{i=1}^N \epsilon^{i,N} \geq 0$ for all $N \geq 1$, and
    \begin{align*}
        & \limsup_{N \rightarrow \infty} \frac{1}{N} \sum_{i=1}^N \epsilon^{i,N} \leq \limsup_{N \rightarrow \infty} \frac{1}{N} \sum_{i=1}^N J_i\big((\Pi^N)^{-i}(\beta^i)\big) - J_i\big(\Pi^N\big) \\
        &  \qquad = \limsup_{N \rightarrow \infty} \frac{1}{N} \sum_{i=1}^N \mathbb{E}^{\mathbb{P}^i_{\Pi, \beta^i}} \left[g(X^{i,i}_T)\right] - \mathbb{E}^\mathbb{P} \left[\big\langle g, \mu^N_T\big\rangle\right] = \limsup_{N \rightarrow \infty} \frac{1}{N} \sum_{i=1}^N \mathbb{E}^{\mathbb{P}^i_{\Pi, \beta^i}} \left[g(X^{i,i}_T) - \big\langle g, \mu^{i,N}_T\big\rangle\right] \\
        &  \qquad = \mathbb{E}^{\mathrm{P}^\infty}\left[g(\widehat{X}_T) - \int_{\widehat{\Omega}} g(x_T) \widehat{\mu}(\mathrm{d}u, \mathrm{d}x)\right] = \mathbb{E}^{\mathrm{P}^\infty}\left[\mathbb{E}^{\mathrm{P}^\infty} \left[g(\widehat{X}_T) \big| \widehat{\mu}\right] - \int_{\widehat{\Omega}} g(x_T) \widehat{\mu}(\mathrm{d}u, \mathrm{d}x)\right] \\
        &  \qquad \leq 0,
    \end{align*}
    where the last two lines are guaranteed by (i), and then we used the fact that any distribution $\mu \in \mathcal{P}_\mathcal{S}(\pi)$ with the corresponding control $\pi$ is a graphon equilibrium of the mutual holding problem. We conclude the proof by noticing that
    $$
    0 \leq \liminf_{N \rightarrow \infty} \frac{1}{N} \sum_{i=1}^N  \epsilon^{i,N} \leq \limsup_{N \rightarrow \infty} \frac{1}{N}\sum_{i=1}^N \epsilon^{i,N} \leq 0.
    $$
\end{proof}

\begin{remark}
    The continuity assumption on the map $(u,v) \mapsto G(u,v)$ is both necessary for Theorem \ref{E:thm:1} and Theorem \ref{E:thm:2}. When dealing with weak convergence of joint distributions with the common first marginal (or more generally strong convergence in first marginal), we can relax from continuous functions to Carath{\'e}odory function, see for example, Lemma A.10 in \cite{BeigJour2023}. 
    
    However, in the proof of Theorem \ref{E:thm:2} (i), the sequence $(\frac{1}{N}\sum_{i=1}^N \delta_{u^i})_{N \geq 1}$ is neither the same as the limiting marginal $\text{Unif}([0,1])$ nor strongly converges to it when $N \rightarrow \infty$. Consequently, if one relaxes from continuous kernel to merely measurable kernel, we shall encounter difficulties in proving the continuity of $c(t,\mu)$ w.r.t. $\mu$, which is crucial to ensure the usage of the Portmanteau theorem in the proof of Theorem \ref{E:thm:1}.
    For any $\mu,\nu \in \mathcal{P}^2_{\textup{Unif}} (I\times \mathbb{R})$ with disintegrations $\mu(\mathrm{d}u, \mathrm{d}x) = \mu_u(\mathrm{d}x)\mathrm{d}u$ and $\nu(\mathrm{d}u, \mathrm{d}x) = \nu_u(\mathrm{d}x)\mathrm{d}u$, consider 
    \begin{align*}
        & |c(t,\mu) - c(t,\nu)| \\
        & = \frac{1}{2}\left|\int_I \left[\int_{\mathbb{R}} \big(c(t,\mu) + b(t,x,[G \mu](u))\big)^+ \mu_u(\mathrm{d}x) -\int_{\mathbb{R}} \big(c(t,\nu) + b(t,x,[G \nu](u))\big)^+ \nu_u(\mathrm{d}x)\right]\mathrm{d}u \right| \\
        & \leq \frac{C_b}{2}\int_I \left[\inf_{\pi_u \in \Pi(\mu_u, \nu_u)} \int_{\mathbb{R}^2} \big|x-x^\prime\big| \pi_u(\mathrm{d}x, \mathrm{d}x^\prime) + \mathcal{WOP}_2 ([G\mu](u), [G \nu](u)) \right]\mathrm{d}u + \frac{1}{2} |c(t,\mu) - c(t,\nu)|.
    \end{align*}
    Consequently we can derive the upper bound:
    $$
    |c(t,\mu) - c(t, \nu)| \leq C \left(\int_I \mathcal{W}_2^2 (\mu_u, \nu_u)\mathrm{d}u\right)^{1/2},
    $$
    where $C_b >0$ only depends on $b$. However, noticing that 
    $
    \mathcal{W}_2^2 (\mu, \nu) \leq \int_I \mathcal{W}_2^2 (\mu_u, \nu_u) \mathrm{d}u,
    $
  this implies that $\mathcal{W}_2(\mu, \nu) = 0$ does not lead to $c(t, \mu) = c(t,\nu)$. Another potential approach is
    \begin{align*}
        |c(t,\mu)  -c(t,\nu)| \leq  C_b \inf_{\pi \in \Pi(\mu,\nu)} \int_{(I \times \mathbb{R})^2} \left(|x-x^\prime| + \mathcal{WOP}_2 ([G \mu](u), [G \nu](u^\prime))\right) \pi(\mathrm{d}u, \mathrm{d}x, \mathrm{d}u^\prime, \mathrm{d}x^\prime).
    \end{align*}
    It is also clear that without the continuity of $G$, it is not possible to deal with $ \mathcal{WOP}_2([G \mu](u), [G \nu](u^\prime)) $.
\end{remark}

\begin{remark}
    Our approximate equilibrium deals with either the case of sampling kernel  or general kernel with $N^2 \|G^N -G\|_\square \rightarrow 0$. If one assumes instead that the graph sequence converges in the cut-norm (resp. $L^1$-norm or the strong operator topology), i.e. $\|G^N - G\|_\square \rightarrow 0$ (resp. $\|G^N-G\|_{L^1([0,1]^2)} := \int_0^1 \int_0^1 |G^N(u,v) - G(u,v)| \mathrm{d}u \mathrm{d}v \rightarrow 0$ or $\|G^N -G\|_{\infty \rightarrow 1} \rightarrow 0$, with operator norm $\|\cdot\|_{\infty \rightarrow 1}$ defined by:
    $$
    \|G\|_{\infty \rightarrow 1} := \sup_{\|g\|_\infty \leq 1} \|G g\|_{L^1([0,1])} = \sup_{\|g\|_\infty \leq 1} \int_I \left|\int_I G(u,v)g(v)\mathrm{d}v\right|\mathrm{d}u
    $$
    ), the arbitrary closeness of $\frac{1}{N} \sum_{j=1}^N \xi^N_{ij}$ and $\frac{1}{N} \sum_{j=1}^N G(u^i,u^j)$ is not guaranteed due to the lack of pointwise control, which is key to the proof Lemma \ref{E:lemma:4}.
\end{remark}

\begin{remark}
    Notice that in \cite[Theorem 3]{LackerSoret2023} the authors are able to prove a stronger convergence result in the case of continuous kernel, i.e.
    $$
    \lim_{N \rightarrow \infty} \esssup_{\mathbf{u}^N \in I^N_1 \times \cdots \times I^N_N} \frac{1}{N} \sum_{i=1}^N \epsilon^{i,N} (\mathbf{u}^N) = 0,
    $$
     We are unable to prove this type of convergence result, due to the facts that the dependence of our approximation errors $(\epsilon^{i,N})_{1 \leq i \leq N}$ on the interaction term $\mathbf{u}^N$ is less explicit. In particular, their interaction terms are included only in the cost functions, not in the coefficients of the dynamics.
\end{remark}

\section{Well-posedness and verification theorem} \label{sec:wellpose}
\subsection{Technical lemmas} \label{subsec:tech} 
We denote by $\mathcal{S} := \mathcal{S}_1 \cup \mathcal{S}_2$ a set of probability measures on $[0,1] \times \mathbb{R}$, where $\mathcal{S}_1$ and $\mathcal{S}_2$ are defined by
\begin{equation}
\begin{aligned}
    \mathcal{S}_1 := \mathcal{P}_{\textup{Unif}}^2 ([0,1] \times \mathbb{R}), \quad \text{and } \mathcal{S}_2 := \bigg\{\frac{1}{k}\sum_{i=1}^k \delta_{(u^i,x^i)}: \; \text{for some $k \geq 1$ with $u^i \in I^k_i$ and $x^i \in \mathbb{R}$ }\bigg\}.
\end{aligned}
\label{E:eqn:51}
\end{equation}
Next, we prove the joint continuity of $(u,\mu) \mapsto [G\mu](u)$ in $\mathcal{WOP}_2$ as well as $\mu \mapsto c(t, \mu)$ for a.e. $t \in [0,T]$. Such continuity is needed to ensure the validity of the Portmanteau theorem in both the proof of well-posedness and propagation of chaos. In the following, we suppose that $G$ is a continuous function defined on $[0,1]^2$. We first recall a slightly extended version of the Arzel\`a–Ascoli theorem.
\begin{lemma}[Exercise 16 in Chapter 7, \cite{Rudin1976}]
    Suppose that $\{f_n\}$ is an equicontinuous sequence of functions on a compact set $K$, and $\{f_n\}$ converges pointwise on $K$. Then, $\{f_n\}$ converges uniformly on $K$.
    \label{E:lemma:6}
\end{lemma}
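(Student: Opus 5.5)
The plan is the standard $\varepsilon/3$ argument: use the compactness of $K$ together with the equicontinuity of $\{f_n\}$ to reduce uniform convergence to pointwise convergence at finitely many points. Throughout, the functions take values in a metric space (here $\mathbb{R}$ or $\mathbb{R}^d$ suffices).

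\textbf{Step 1 (finite net).} Fix $\varepsilon>0$. By equicontinuity there is $\delta>0$ such that $|f_n(x)-f_n(y)|<\varepsilon/3$ for all $n$ whenever $x,y\in K$ satisfy $d(x,y)<\delta$. The pointwise limit $f$ inherits this bound, with $\varepsilon/3$ replaced by $\le\varepsilon/3$, by letting $n\to\infty$. Cover $K$ by the open balls $B(x,\delta)$, $x\in K$, and extract a finite subcover centred at $x_1,\dots,x_m$.

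\textbf{Step 2 (Cauchy at the centres).} The sequences $f_n(x_k)$ converge for each $k$, and there are only finitely many of them. Hence there is $N$ such that $|f_n(x_k)-f_{n'}(x_k)|<\varepsilon/3$ for all $n,n'\ge N$ and all $k=1,\dots,m$.

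\textbf{Step 3 (uniform Cauchy).} For arbitrary $x\in K$, choose $k$ with $d(x,x_k)<\delta$. By the triangle inequality,
\[
|f_n(x)-f_{n'}(x)|\le |f_n(x)-f_n(x_k)|+|f_n(x_k)-f_{n'}(x_k)|+|f_{n'}(x_k)-f_{n'}(x)|<\varepsilon
\]
for all $n,n'\ge N$. So the sequence is uniformly Cauchy on $K$. Letting $n'\to\infty$ and using pointwise convergence gives $\sup_K|f_n-f|\le\varepsilon$ for $n\ge N$, which is uniform convergence.

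The only delicate point is using compactness to pass to finitely many centres. This is exactly what turns pointwise convergence into a single uniform index $N$, so no real obstacle is expected. Completeness of the target space is not needed, because the pointwise limit is given by hypothesis.
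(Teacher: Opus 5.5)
Your $\varepsilon/3$ argument is correct and is essentially the paper's route: the paper gives no proof of its own and simply cites Rudin's exercise, and yours is the standard solution, using Rudin's uniform notion of equicontinuity (Definition 7.22), which is exactly the property the paper checks for $f^k$ in the proof of Lemma \ref{E:lemma:7}(b). If equicontinuity were only pointwise, the same argument would still go through by covering $K$ with balls $B(x,\delta_x)$ and using $|f_n(y)-f_n(x_k)|<\varepsilon/3$ for $y\in B(x_k,\delta_{x_k})$.
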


\begin{lemma}
    Let $\mu, \mu^k \in \mathcal{S}$ satisfy $\mathcal{W}_2 (\mu^k, \mu) \rightarrow 0$ with $\mu \in \mathcal{P}^2_{\textup{Unif}}([0,1]\times \mathbb{R})$. In addition, suppose that $\widehat{u}^k \rightarrow \widehat{u}$ with $\widehat{u}^k, \widehat{u} \in [0,1]$. Then, we have:
    \begin{itemize}
        \item[\textup{(a)}] $\lim_{k \rightarrow \infty} \mathcal{WOP}_2 \big([G \mu^k](\widehat{u}^k), [G \mu](\widehat{u})\big) = 0$.

        \item[\textup{(b)}] $c(t, \mu^k) \rightarrow c(t, \mu)$, for a.e. $t \in [0,T]$. 
    \end{itemize}
    \label{E:lemma:7}
\end{lemma}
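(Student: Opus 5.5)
For part (a), I will write $m^k := [G\mu^k](\widehat u^k)$ and $m := [G\mu](\widehat u)$, and use the second expression for $\mathcal{WOP}_2^2$. It then suffices to prove three things: $m_{m^k}\to m_m$, $M_{x_0}(m^k)\to M_{x_0}(m)$, and $m_{m^k}m_m\mathcal{W}_2^2(\overline{m^k},\overline m)\to 0$. In fact, showing $\mathcal{W}_2$-type convergence of the unnormalized measures, i.e. $\int f\,\mathrm{d}m^k\to\int f\,\mathrm{d}m$ for all continuous $f$ with $|f(x)|\le C(1+|x|^2)$, handles all three.

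Indeed, $\int f\,\mathrm{d}m^k=\int G(\widehat u^k,v)f(x)\,\mu^k(\mathrm{d}v,\mathrm{d}x)$. I split this as
\[
\int \big(G(\widehat u^k,v)-G(\widehat u,v)\big)f(x)\,\mu^k(\mathrm{d}v,\mathrm{d}x)+\int G(\widehat u,v)f(x)\,\mu^k(\mathrm{d}v,\mathrm{d}x).
\]
The first term is bounded by $\omega_G(|\widehat u^k-\widehat u|)\,C\,(1+\int|x|^2\,\mathrm{d}\mu^k)$, where $\omega_G$ is the uniform modulus of continuity of $G$ on the compact $[0,1]^2$. This tends to $0$ because second moments are bounded under $\mathcal W_2$ convergence. (Alternatively, one can invoke Lemma \ref{E:lemma:6} on the equicontinuous family $v\mapsto G(\cdot,v)$.) The second term converges to $\int G(\widehat u,v)f(x)\,\mu(\mathrm{d}v,\mathrm{d}x)$ because $(v,x)\mapsto G(\widehat u,v)f(x)$ is continuous with quadratic growth and $\mathcal W_2(\mu^k,\mu)\to0$.

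Applying this with $f\equiv1$ gives convergence of the masses, and with $f=|x-x_0|^2$ it gives convergence of the $M_{x_0}$. If $m_m>0$, the normalized measures $\overline{m^k}$ then converge in $\mathcal W_2$ to $\overline m$, since they converge weakly and their second moments converge, so all three terms vanish. If $m_m=0$, the first two terms vanish directly, and the third is bounded by $m_{m^k}m_m\cdot 2(M(\overline{m^k})+M(\overline m))$. This is $0$ because $m_m=0$ (with the convention $\overline m=\delta_{x_0}$, the product is $0$ outright). Note that the membership $\mu^k\in\mathcal S$ plays no special role in (a); only continuity of $G$ is used.

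For part (b), I first establish a contraction-type estimate, then use a continuity argument. Write $c_k=c(t,\mu^k)$, $c=c(t,\mu)$, $\varphi_k(v,y)=b(t,y,[G\mu^k](v))$ and $\varphi(v,y)=b(t,y,[G\mu](v))$. Since $z\mapsto z^+$ is $1$-Lipschitz,
\[
|c_k-c|\le \tfrac12|c_k-c|+\tfrac12\Big|\int(c+\varphi_k)^+\,\mathrm{d}\mu^k-\int(c+\varphi)^+\,\mathrm{d}\mu\Big|,
\]
so it suffices to show that $\int(c+\varphi_k)^+\,\mathrm{d}\mu^k\to\int(c+\varphi)^+\,\mathrm{d}\mu$. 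I further bound
\[
\Big|\int\big[(c+\varphi_k)^+-(c+\varphi)^+\big]\,\mathrm{d}\mu^k\Big|\le C\sup_{v\in I}\mathcal{WOP}_2\big([G\mu^k](v),[G\mu](v)\big).
\]
The supremum tends to $0$ by (a), argued by contradiction: if it did not, there would be $v_k$ with $\mathcal{WOP}_2([G\mu^k](v_k),[G\mu](v_k))\ge\varepsilon$. Passing to a subsequence $v_k\to v$, applying (a) twice (once with $\mu^k$ fixed to $\mu$), and using the triangle inequality gives a contradiction. It remains to show that $\int(c+\varphi)^+\,\mathrm{d}\mu^k\to\int(c+\varphi)^+\,\mathrm{d}\mu$. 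The integrand is bounded, and it is continuous in $(v,y)$ because $b$ is continuous and $v\mapsto[G\mu](v)$ is $\mathcal{WOP}_2$-continuous by (a). Weak convergence then gives the limit, for every $t$.

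I expect the main obstacle to be the continuity used in (b). Here $b$ is continuous in $(t,x,m)$ and Lipschitz in $(x,m)$, so the positive part is continuous and the argument runs for every $t$; the ``a.e. $t$'' qualifier is therefore harmless. Part (a), in the degenerate case $m_m=0$, requires some care with the convention for $\overline\mu$, but it is not a real obstacle.
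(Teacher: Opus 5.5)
Your proof is correct. Its skeleton matches the paper's, but the two key technical steps are done differently. The shared skeleton is the three-term expansion of $\mathcal{WOP}_2^2$ in (a), and in (b) the split of $|c(t,\mu^k)-c(t,\mu)|$ into a weak-convergence term and a term $C\sup_{u}\mathcal{WOP}_2([G\mu^k](u),[G\mu](u))$.

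In (a), the paper treats $\mathrm{I}_k,\mathrm{II}_k,\mathrm{III}_k$ separately. It gets convergence of the masses from the Riemann-sum structure of $\mu^k\in\mathcal{S}_2$, then proves weak convergence plus uniform integrability of the normalized measures. Your single test-function statement (convergence of $\int f\,\mathrm{d}m^k$ for continuous $f$ of quadratic growth) handles all three terms at once. It uses only $\mathcal{W}_2(\mu^k,\mu)\to0$ and the uniform continuity of $G$, which confirms your remark that neither the Riemann-sum structure nor membership in $\mathcal{S}$ is needed.

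In (b), the paper proves equicontinuity of $f^k(u)=\mathcal{WOP}_2([G\mu^k](u),[G\mu](u))$ by an explicit estimate, using the Hahn decomposition and the weighted total-variation bound \eqref{E:eqn:28}, and then applies Lemma \ref{E:lemma:6}. You instead use (a) twice: once with moving labels $v_k\to v$, and once with the constant sequence $\mu$. Together with the triangle inequality for $\mathcal{WOP}_2$, this gives $f^k(v_k)\to0$ along every convergent sequence of labels. That is continuous convergence, which on the compact $[0,1]$ is the same as uniform convergence.

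Your version is shorter and more general. It also makes explicit two points the paper leaves implicit. The first is the absorption of $\tfrac12|c(t,\mu^k)-c(t,\mu)|$. The second is that the weak-convergence term vanishes because $(v,y)\mapsto(c(t,\mu)+b(t,y,[G\mu](v)))^+$ is bounded and continuous (by (a) for the fixed measure $\mu$), so the conclusion holds for every $t$, not just almost every $t$.

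What the paper's route gives in exchange is a quantitative modulus of continuity of $u\mapsto[G\mu^k](u)$ in $\mathcal{WOP}_2$. That modulus is uniform in $k$ and depends only on the modulus of $G$ and a second-moment bound, which a compactness argument does not provide.
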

\begin{proof}
    (a) We provide the proof for the case of $\mu^k \in \mathcal{S}_2$, $\forall k \geq 1$. For $\mu^k \in \mathcal{S}_1$, the proof is similar and indeed simpler. We start by considering
    \begin{align*}
        & \mathcal{WOP}_2^2 \big([G \mu^k](\widehat{u}^k), [G \mu](\widehat{u})\big) = \bigg(\int_I G(\widehat{u},v)\mathrm{d}v - \frac{1}{k}\sum_{i=1}^k G(\widehat{u}^k, u^i)\bigg)^2 \\
        & \quad + \bigg(\int_I G(\widehat{u},v)\mathrm{d}v\bigg)\bigg(\frac{1}{k}\sum_{i=1}^k G(\widehat{u}^k, u^i)\bigg) \mathcal{W}_2^2 \Big( \overline{[G \mu^k](\widehat{u}^k)}, \overline{[G \mu](\widehat{u})}\Big) \\
        & \quad + \bigg(\int_I G(\widehat{u},v)\mathrm{d}v - \frac{1}{k}\sum_{i=1}^k G(\widehat{u}^k, u^i)\bigg) \bigg[\int_{I \times \mathbb{R}} G(\widehat{u},v)x^2 \mu(\mathrm{d}v, \mathrm{d}x) - \int_{I \times \mathbb{R}} G(\widehat{u}^k, v)x^2 \mu^k(\mathrm{d}v, \mathrm{d}x)\bigg] \\
        & =: \text{I}_k + \text{II}_k + \text{III}_k.
    \end{align*}
    
    For $\text{I}_k$ and $\text{III}_k$, using the uniform continuity of $G$ as well as the definition of Riemann integral, we have for any $\epsilon > 0$, there exists some $K_1$ such that for any $k > K_1$ 
    \begin{align}
        \bigg|\int_I G(\widehat{u},v)\mathrm{d}v - \frac{1}{k}\sum_{i=1}^k G(\widehat{u}^k, u^i)\bigg| & \leq \bigg|\int_I G(\widehat{u},v)\mathrm{d}v - \frac{1}{k}\sum_{i=1}^k G(\widehat{u}, u^i)\bigg| + \frac{1}{k}\sum_{i=1}^k \big|G(\widehat{u}, u^i) - G(\widehat{u}^k,u^i)\big| \nonumber \nonumber \\
        & \leq \epsilon. \label{E:eqn:39}
    \end{align}
    Since $\mathcal{W}_2$-convergence implies the second-order moment convergence, from $\mathcal{W}_2(\mu^k, \mu) \rightarrow 0$, we have $\sup_k \int_{I \times \mathbb{R}} (u^2+x^2) \mu^k (\mathrm{d}u, \mathrm{d}x) < \infty.$
    Then, for any $k > K_1$, using \eqref{E:eqn:39} gives
    \begin{equation}
        \text{III}_k \leq C \bigg|\int_I G(\widehat{u},v)\mathrm{d}v - \frac{1}{k}\sum_{i=1}^k G(\widehat{u}^k, u^i)\bigg|  \leq C \epsilon.
        \label{E:eqn:48}
    \end{equation}

    Next, we claim that 
    \begin{equation}
        \text{II}_k := \bigg(\int_I G(\widehat{u},v)\mathrm{d}v\bigg)\bigg(\frac{1}{k}\sum_{i=1}^k G(\widehat{u}^k, u^i)\bigg) \mathcal{W}_2^2 \Big( \overline{[G \mu^k](\widehat{u}^k)}, \overline{[G \mu](\widehat{u})}\Big) \longrightarrow 0,
        \label{E:eqn:49}
    \end{equation}
    which ensures that for the previously specified $\epsilon$, there exists a $K_2 > 0$ such that for any $k > K_2$, we have $\text{II}_k \leq \epsilon$. Combined with \eqref{E:eqn:39}-\eqref{E:eqn:48}, we have for $k > \max\{K_1, K_2\}$ that
    $$
    \mathcal{WOP}_2^2 \big([G \mu^k](\widehat{u}^k), [G \mu](\widehat{u})\big) \leq C \epsilon \quad \text{with $C$ independent of $k$}.
    $$
    
    We complete the first part proof by verifying the claim stated in \eqref{E:eqn:49}. We firstly assume that $\int_I G(\widehat{u},v)\mathrm{d}v \neq 0$, and notice that
    \begin{align}
        & \lim_{R \rightarrow \infty} \limsup_{k \rightarrow \infty}\frac{1}{\frac{1}{k}\sum_{i=1}^k G(\widehat{u}^k, u^i)} \int_{I \times \mathbb{R}} G(\widehat{u}^k,v)x^2 \mathbf{1}_{\{|x| \geq R\}} \mu^k(\mathrm{d}v, \mathrm{d}x)  \nonumber \\
        & \leq \lim_{R \rightarrow \infty} \limsup_{k \rightarrow \infty} \frac{1}{\int_I G(\widehat{u}, v)\mathrm{d}v}\int_{I \times \mathbb{R}} \big(v^2+x^2\big) \mathbf{1}_{\big\{\sqrt{v^2+x^2} \geq R\big\}} \mu^k(\mathrm{d}v, \mathrm{d}x)   \nonumber \\
        & = 0 \cdot \frac{1}{\int_I G(\widehat{u}, v)\mathrm{d}v}\nonumber \\
        & =0,     \label{E:eqn:40}
    \end{align}
    where we used $\mathcal{W}_2 (\mu^k, \mu)\rightarrow0$ for the first equality. Next, for any $h \in C_b(\mathbb{R})$, considering
    \begin{align*}
        & \bigg|\int_{I \times \mathbb{R}} G(\widehat{u}^k, v)h(x)\mu^k(\mathrm{d}v, \mathrm{d}x) - \int_{I \times \mathbb{R}} G(\widehat{u}, v)h(x)\mu(\mathrm{d}v, \mathrm{d}x) \bigg| \\
        & \leq C \frac{1}{k}\sum_{i=1}^k\big|G(\widehat{u}^k, u^i) - G(\widehat{u},u^i)\big|+\bigg|\int_{I \times \mathbb{R}} G(\widehat{u}, v)h(x)\big(\mu^k(\mathrm{d}v, \mathrm{d}x) - \mu(\mathrm{d}v, \mathrm{d}x)\big) \bigg| \\
        & \rightarrow 0,
    \end{align*}
    where we used the uniform continuity of $G$ on $[0,1]^2$ for the last convergence. Combined with \eqref{E:eqn:39} and using $\int_I G(\widehat{u},v)\mathrm{d}v \neq 0$, we have for any $h \in C_b (\mathbb{R})$:
    $$
    \frac{1}{\frac{1}{k}\sum_{i=1}^k G(\widehat{u}^k,u^i)}\int_{I \times \mathbb{R}} G(\widehat{u}^k, v)h(x)\mu^k(\mathrm{d}v, \mathrm{d}x) \longrightarrow \frac{1}{\int_I G(\widehat{u},v)\mathrm{d}v} \int_{I \times \mathbb{R}} G(\widehat{u}, v)h(x)\mu(\mathrm{d}v, \mathrm{d}x), 
    $$
    which, together with \eqref{E:eqn:40}, completes the proof of $\text{II}_k \rightarrow 0$. 
    
    For the case $\int_I G(\widehat{u},v)\mathrm{d}v = 0$, we are left with
    \begin{align*}
        \lim_{k \rightarrow \infty} \text{II}_k  = \lim_{k \rightarrow \infty} 0 \cdot \bigg(\frac{1}{k}\sum_{i=1}^k G(\widehat{u}^k,u^i)\bigg) \mathcal{W}_2^2 \Big(\overline{[G \mu^k](\widehat{u}^k)}, \delta_0\Big) \leq \lim_{k \rightarrow \infty} 0 \cdot \int_{I \times \mathbb{R}} G(\widehat{u}^k, v)x^2 \mu^k(\mathrm{d}v, \mathrm{d}x)  =0,
    \end{align*}
    due to $\sup_k \int_{I \times \mathbb{R}} (u^2 + x^2) \mu^k(\mathrm{d}u, \mathrm{d}x) < \infty$.

    \noindent (b) We first notice that 
    \begin{align*}
        \big|c(t, \mu^k) - c(t, \mu)\big| & \leq \left|\int_{I \times \mathbb{R}} \big(c(t,\mu) + b(t,x, [G \mu](u))\big)^+\big(\mu^k(\mathrm{d}u, \mathrm{d}x) - \mu(\mathrm{d}u, \mathrm{d}x)\big)\right| \\
        & \quad + C \sup_{u \in I} \mathcal{WOP}_2 \big([G\mu^k](u), [G \mu](u)\big) \\
        & =: \text{IV}_k + \text{V}_k.
    \end{align*}
    From (a), we have $\lim_{k \rightarrow \infty} \text{IV}_k = 0$. For $\text{V}_k$, denote by $f^k(u) := \mathcal{WOP}_2 \big([G \mu^k](u), [G \mu](u)\big)$. The pointwise convergence of $\{f^k\}$ on $[0,1]$ comes from (a). The rest of the proof is devoted to prove the equicontinuity property. For any $a, b \in [0,1]$, by adding and subtracting terms, we have
    \begin{align*}
        \big|f^k(a) - f^k(b)\big| & = \big|f^k(a) - \mathcal{WOP}_2 \big([G \mu](a), [G \mu^k](b)\big) + \mathcal{WOP}_2 \big([G \mu](a), [G \mu^k](b)\big) - f^k(b)\big| \\
        & \leq \mathcal{WOP}_2 \big([G \mu^k](a), [G \mu^k](b)\big) +\mathcal{WOP}_2 \big([G \mu](a), [G \mu](b)\big),
    \end{align*}
    where we used the triangle inequality of $\mathcal{WOP}_2$, see Theorem 2 in \cite{LeblGoui2023}. 
    
    Suppose that $\int_I G(a,v)\mathrm{d}v \neq 0$ and $\int_I G(b,v)\mathrm{d}v \neq 0$. Then, we have from \eqref{E:eqn:28} that
    \begin{align*}
        & \mathcal{WOP}_2^2 \big([G \mu^k](a), [G \mu^k](b)\big) \\
        &  = \bigg[\frac{1}{k}\sum_{j=1}^k \big(G(a,u^j) - G(b,u^j)\big)\bigg]^2 + \bigg(\frac{1}{k}\sum_{j=1}^k G(a,u^j)\bigg)\bigg( \frac{1}{k}\sum_{j=1}^k G(b,u^j)\bigg)\mathcal{W}_2^2 \Big(\overline{[G \mu^k](a)}, \overline{[G \mu^k](b)}\Big) \\
        & \quad +  \bigg[\frac{1}{k}\sum_{j=1}^k \big(G(a,u^j) - G(b,u^j)\big)\bigg] \bigg[\int_{I \times \mathbb{R}} G(a,v) x^2 \mu^k(\mathrm{d}v, \mathrm{d}x) - \int_{I \times \mathbb{R}} G(b,v) x^2 \mu^k(\mathrm{d}v, \mathrm{d}x)\bigg] \\
        & \leq \frac{1}{k}\sum_{j=1}^k \big|G(a,u^j) - G(b,u^j)\big|^2 + C  \frac{1}{k}\sum_{j=1}^k \big|G(a,u^j) - G(b,u^j)\big| \\
        & \quad + 2 \bigg(\frac{1}{k}\sum_{j=1}^k G(a,u^j)\bigg)\bigg( \frac{1}{k}\sum_{j=1}^k G(b,u^j)\bigg) \int_\mathbb{R} x^2 \left|\overline{[G \mu^k](a)} - \overline{[G \mu^k](b)}  \right|(\mathrm{d}x) \\
        & \leq \frac{1}{k}\sum_{j=1}^k \big|G(a,u^j) - G(b,u^j)\big|^2 + C  \frac{1}{k}\sum_{j=1}^k \big|G(a,u^j) - G(b,u^j)\big| \\
        & \quad + 2 \bigg(\frac{1}{k}\sum_{j=1}^k G(a,u^j)\bigg)\bigg( \frac{1}{k}\sum_{j=1}^k G(b,u^j)\bigg) \int_\mathbb{R} x^2 \big(\mathbf{1}_{\mathrm{P}_k}(x) - \mathbf{1}_{\mathrm{N}_k}(x)\big) \left(\overline{[G \mu^k](a)} - \overline{[G \mu^k](b)}  \right)(\mathrm{d}x) \\
        & \leq \frac{1}{k}\sum_{j=1}^k \big|G(a,u^j) - G(b,u^j)\big|^2 + C  \frac{1}{k}\sum_{j=1}^k \big|G(a,u^j) - G(b,u^j)\big| \\
        & \quad + 2 \int_{I \times \mathbb{R}} \big|G(a,v)- G(b,v)\big| x^2 \mu^k(\mathrm{d}v, \mathrm{d}x) + 2 \frac{1}{k}\sum_{j=1}^k \big|G(a,u^j) - G(b,u^j)\big| \int_{I \times \mathbb{R}} x^2 \mu^k (\mathrm{d}v, \mathrm{d}x),
    \end{align*}
    where $\mathrm{P}_k$ and $\mathrm{N}_k$ are two measurable sets satisfying:
    \begin{itemize}
    \item $\mathrm{P}_k \cup \mathrm{N}_k = \mathbb{R}$ and $\mathrm{P}_k \cap \mathrm{N}_k = \emptyset$;

    \item $\mathrm{P}_k$ is a positive set and $\mathrm{N}_k$ is a negative set.
\end{itemize}
    If either $\int_I G(a,v)\mathrm{d}v=0$ or $\int_I G(b,v)\mathrm{d}v=0$ holds, then
    \begin{align*}
        \mathcal{WOP}_2^2 \big([G \mu^k](a), [G \mu^k](b)\big) \leq \frac{1}{k}\sum_{j=1}^k \big|G(a,u^j) - G(b,u^j)\big|^2 + C  \frac{1}{k}\sum_{j=1}^k \big|G(a,u^j) - G(b,u^j)\big|.
    \end{align*}
    Using the uniform continuity of $G$, i.e. for any $\epsilon > 0$, there exists some $\delta > 0$ such that if $\sqrt{|u_1 - u_2|^2 + |v_1 - v_2|^2} < \delta$, $|G(u_1, v_1) - G(u_2, v_2)| < \epsilon^2$. Consequently, if $|a-b| < \delta$, we obtain that
    \begin{align*}
        \mathcal{WOP}_2^2 \big([G \mu^k](a), [G \mu^k](b)\big) \leq \epsilon^4 + C \epsilon^2 + 4 \epsilon^2 \cdot \sup_k \int_{I \times \mathbb{R}} \big(v^2 + x^2\big) \mu^k(\mathrm{d}v, \mathrm{d}x) \leq C \epsilon^2 .
    \end{align*}
    
    Similarly, we have
    \begin{align*}
     \mathcal{WOP}_2^2 \big([G \mu](a), [G \mu](b)\big) & \leq \int_I \big|G(a,v) - G(b,v)\big|^2 \mathrm{d}v + C \int_I \big|G(a,v) - G(b,v)\big| \mathrm{d}v \\
        & \quad + 2 \int_{I \times \mathbb{R}} \big|G(a,v)-G(b,v)\big|x^2\mu(\mathrm{d}v, \mathrm{d}x)  \\
        & \quad + 2 \int_I \big|G(a,v)- G(b,v)\big| \mathrm{d}v  \int_{I \times \mathbb{R}}x^2 \mu(\mathrm{d}v, \mathrm{d}x) \\
        & \leq C \epsilon^2.
    \end{align*}
    Therefore,
    \begin{equation*}
        \big|f^k(a)- f^k(b)\big| \leq C \epsilon, \quad \text{where $C >0$ is independent of $k$}.
    \end{equation*}
    Using Lemma \ref{E:lemma:6}, the map $f^k(u)$ converges uniformly to 0 on $[0, 1]$. Thus, $\lim_{k \rightarrow \infty} \text{V}_k = 0$.
\end{proof}

\subsection{Proof of Theorem \ref{E:thm:1} (i)}
Denote by $H(x) := 1-\frac{1}{2}\mathbf{1}_{\{x \geq 0\}}$ and $\gamma(t,u,x,\mu) := b(t,x,[G \mu](u)) + c(t,\mu)$. Apart from the discontinuity of the composition of $H \circ \gamma$, the additional difficulty comes from the lack of regularity in the Wasserstein space of $B$ and $\Sigma$ in $\mu$, which prohibits the existence of a unique strong solution and hence we turn to find a weak solution. 
    
For $n \in \mathbb{N}^*$ and $t \in [0,T)$, we introduce the $n$-dyadic projection $[t]^n := 2^{-n}T\lfloor \frac{2^n t}{T}\rfloor$, where $\lfloor \cdot \rfloor$ denotes the floor function. Consider the Euler discretization of the mean-field SDE \eqref{E:eqn:6}:
\begin{equation}
    X^n_t = X_0 + \int_0^t B([s]^n, U, X^n_{[s]^n}, \mu^n_{[s]^n}) \mathrm{d}s + \int_0^t (\widehat{\sigma} H \circ \gamma) ([s]^n, U, X^n_{[s]^n}, \mu^n_{[s]^n})  \mathrm{d}W_s
\end{equation}
with $\widehat{\sigma} (t,u,x,\mu) := \sigma(t,x,[G\mu](u))$ and $\mu^n := \mathcal{L} (U, X^n)$. 

We next divide the proof into several steps. 

\vspace{1mm}

\noindent Step 1. Relative compactness of $(\mu^n)_{n \geq 1}$ in $\mathcal{P}^2_{\text{Unif}} ([0, 1] \times \mathcal{C})$. The relative compactness of $(\mathcal{L} (U, X^n))_{n \geq 1}$ in $\mathcal{P}_{\text{Unif}}^2([0,1] \times \mathcal{C})$ is equivalent to the relative compactness of $(\mathcal{L}(X^n))_{n \geq 1}$ in $\mathcal{P}^2(\mathcal{C})$ (\cite{Lacker2015}, Lemma A.2), which will hold true if and only if $(\mathcal{L}(X^n))_{n \geq 1}$ is tight and satisfies the uniform integrability condition:
\begin{equation}
    \lim_{R \rightarrow \infty} \sup_n \mathbb{E} \left[\|X^n\|^2_T \mathbf{1}_{\{\|X^n\|_T \geq R\}}\right] = 0.
    \label{E:eqn:10}
\end{equation}
since Point (ii) of Theorem 7.12 in \cite{Vill2021}. 

To show the tightness of $(\mathcal{L}(X^n))_{n \geq 1}$, we first denote by $\mathcal{T}$ the collection of all $\mathbb{F}$-stopping times. We obtain that there exists a constant $C > 0$, independent of $n$, satisfying
\begin{align}
    & \sup_{\tau \in \mathcal{T}} \mathbb{E} \left[\left|X^n_{(\tau + \epsilon) \wedge T} - X^n_\tau\right|^2\right] \nonumber \\
    & \quad = \sup_{\tau \in \mathcal{T}} \mathbb{E} \left[\left|\int_\tau^{(\tau + \epsilon) \wedge T} B([s]^n, U, X^n_{[s]^n}, \mu^n_{[s]^n}) \mathrm{d}s + \int_\tau^{(\tau + \epsilon) \wedge T} (\widehat{\sigma} H \circ \gamma) ([s]^n, U, X^n_{[s]^n}, \mu^n_{[s]^n})  \mathrm{d}W_s\right|^2\right] \nonumber \\
    & \quad \leq C \sup_{\tau \in \mathcal{T}} \mathbb{E} \left[\int_\tau^{(\tau + \epsilon) \wedge T} \left|B([s]^n, U, X^n_{[s]^n}, \mu^n_{[s]^n})\right|^2 + \left|(\widehat{\sigma} H \circ \gamma) ([s]^n, U, X^n_{[s]^n}, \mu^n_{[s]^n})\right|^2  \mathrm{d}s\right] \nonumber \\
    & \quad \leq C \epsilon, \label{E:eqn:9}
\end{align}
where we used the boundedness of $b$ and $\sigma$. Thanks to Markov's inequality, \eqref{E:eqn:9} implies for any $a > 0$ and $n \in \mathbb{N}^*$:
$$
\sup_{\tau \in \mathcal{T}} \mathbb{P} \left(\left|X^n_{(\tau + \epsilon) \wedge T} - X^n_\tau\right| \geq a\right) \leq \frac{C \epsilon}{a^2},
$$
which verifies the tightness of $(\mathcal{L}(X^n))_{n \geq 1}$ in $\mathcal{P} (\mathcal{C})$ from Aldous's criterion ((16.23) of \cite{Bill2013}). 

Using Markov's inequality, we obtain that there exists $C > 0$, independent of $n$
$$
\sup_n \mathbb{E} \left[\|X^n\|^2_T \mathbf{1}_{\{\|X^n\|_T \geq R\}}\right] \leq \sup_n \frac{1}{R^{p-2}} \mathbb{E} [\|X^n\|^p_T] \leq \frac{C}{R^{p-2}} \left(1+\mathbb{E}[|X_0|^p]\right) < \infty,
$$
since $\mu_0 \in \mathcal{P}^p_{\text{Unif}}([0,1] \times \mathbb{R})$. We conclude the verification of \eqref{E:eqn:10} by letting $R \rightarrow \infty$. Consequently, $(\mu^n)_{n \geq 1}$ is relatively compact in $\mathcal{P}^2_{\text{Unif}}([0,1] \times \mathcal{C})$, and converges to some limit $\mu^\infty \in \mathcal{P}^2_{\text{Unif}}([0,1]\times\mathcal{C})$ in $\mathcal{W}_2$, after possibly passing to a sub-sequence. For brevity of notation, we still use $(\mu^n)_{n \geq 1}$ to denote its sub-sequence. Then,  we have 
\begin{equation*}
    P^n (\mathrm{d}m, \mathrm{d}\overline{x}, \mathrm{d}t) := \delta_{\mu^n_t}(\mathrm{d}m) \mu^n_{t} (\mathrm{d}\overline{x}) \frac{\mathrm{d}t}{T} \longrightarrow P^\infty (\mathrm{d}m, \mathrm{d}\overline{x}, \mathrm{d}t) := \delta_{\mu^\infty_t} (\mathrm{d}m) \mu^\infty_{t}(\mathrm{d}\overline{x}) \frac{\mathrm{d}t}{T}, \quad \text{in $\mathcal{W}_2$},
\end{equation*}
where we used the notation $\overline{x}:=(u,x)$. Moreover, as $\mathbb{E} [|X^n_t - X^n_{[t]^n}|^2] \rightarrow 0$ as $n \rightarrow \infty$, we have
\begin{equation*}
    \lim_{n \rightarrow \infty} \mathcal{W}_2 (\overline{P}^n, P^n) =0, \quad \text{where } \overline{P}^n (\mathrm{d}m, \mathrm{d}\overline{x}, \mathrm{d}t) := \int_0^T \delta_{\mu^n_{[s]^n}} (\mathrm{d}m) \mu^n_{[s]^n} (\mathrm{d}\overline{x}) \delta_{[s]^n} (\mathrm{d}t)\frac{\mathrm{d}s}{T}.
\end{equation*}
Using the triangle inequality, we obtain that
\begin{equation}
    \lim_{n \rightarrow \infty} \mathcal{W}_2 (\overline{P}^n, P^\infty) \leq \lim_{n \rightarrow \infty} \mathcal{W}_2 (\overline{P}^n, P^n) + \lim_{n \rightarrow \infty} \mathcal{W}_2 (P^n, P^\infty) \longrightarrow 0.
    \label{E:eqn:13}
\end{equation}

\vspace{1mm}

\noindent Step 2. Denote $\mu^n_u := \mathcal{L}(X^n | U=u)$, uniquely defined up to a.e. equality. Recall that $\sigma$ is assumed to be bounded from below away from zero. It follows from Proposition A.1 of \cite{DjeteTouzi2024} that for a.e. $u$, $\mu^\infty_u$ is equivalent to the Lebesgue measure on $[0,T] \times \mathbb{R}$. Therefore, combined with the assumptions satisfied by $b$ in Assumption \ref{E:assump:1} (ii) and the convergence results proved in Lemma \ref{E:lemma:7}, the discontinuous points of the map $H \circ \gamma$ is $P^\infty$-negligible. Then, it follows from the Portmanteau theorem restricted on the space $[0,T] \times I \times \mathbb{R} \times \mathcal{P}^2_{\text{Unif}}(I \times \mathbb{R})$ that, for all bounded continuous $\varphi$:
\begin{equation}
    \lim_{n \rightarrow \infty} \int \varphi(x) \left(\widehat{\sigma} H \circ \gamma\right)^2(t, \overline{x}, m) \overline{P}^n (\mathrm{d}m, \mathrm{d}\overline{x}, \mathrm{d}t) = \int \varphi(x) \left(\widehat{\sigma} H \circ \gamma\right)^2(t, \overline{x}, m) P^\infty (\mathrm{d}m, \mathrm{d}\overline{x}, \mathrm{d}t).
    \label{E:eqn:14}
\end{equation}
due to \eqref{E:eqn:13}.

\vspace{1mm}

\noindent Step 3. For any $f \in C^2_b (\mathbb{R})$ and $t \in [0,T]$, it follows from It\^o's formula that
\begin{align*}
    & \langle f, \mu^\infty_t \rangle - \langle f, \mu_0 \rangle = \lim_{n \rightarrow \infty} \mathbb{E} \left[f(X^n_t)\right] - \mathbb{E} [f(X_0)]\\
    & = \lim_{n \rightarrow \infty} \int_0^t \mathbb{E} \left[\partial_x f (X^n_s) B([s]^n, U, X^n_{[s]^n}, \mu^n_{[s]^n}) + \frac{1}{2} \partial_x^2 f(X^n_s) \left(\widehat{\sigma}H \circ \gamma\right)^2 ([s]^n,U, X^n_{[s]^n}, \mu^n_{[s]^n})\right] \mathrm{d}s \\
    & = \lim_{n \rightarrow \infty} \int_0^t \mathbb{E} \left[\partial_x f (X^n_{[s]^n}) B([s]^n, U, X^n_{[s]^n}, \mu^n_{[s]^n}) + \frac{1}{2} \partial_x^2 f(X^n_{[s]^n}) \left(\widehat{\sigma}H \circ \gamma\right)^2 ([s]^n, U, X^n_{[s]^n}, \mu^n_{[s]^n})\right] \mathrm{d}s,
\end{align*}
where we used $\mathbb{E}[|X^n_s - X^n_{[s]^n}|^2] \leq C T/2^n$ for the last equality. It then follows from \eqref{E:eqn:14} that
$$
\langle f, \mu^\infty_t \rangle - \langle f, \mu_0 \rangle =  \int  \left[\partial_x f (x) B(s, u, x, \mu^\infty_s) + \frac{1}{2} \partial_x^2 f(x) \left(\widehat{\sigma}H \circ \gamma\right)^2 (s, u, x, \mu^\infty_s)\right]\mu^\infty_s(\mathrm{d}u, \mathrm{d}x)\mathrm{d}s,
$$
which ensures the existence of a weak solution of \eqref{E:eqn:6}. The square-integrability of the weak solution can be obtained using Fatou's lemma.

\subsection{Proof of Theorem \ref{E:thm:1} (ii)}
Let $\mu \in \mathcal{P}_\mathcal{S}(\pi^*)$ be the distribution of any weak solution of \eqref{E:eqn:6} and define the pair $(\pi^*, \mu)$. In order to prove that $(\pi^*, \mu)$ is a graphon equilibrium of the mutual holding problem, we consider the individual representative agent optimization problem $V_0 = \sup_{\beta \in \mathcal{A}} J_{\pi^*, \mu}(\beta)$, where $J_{\pi^*, \mu}(\beta) := \mathbb{E}^{\mathbb{P}^\beta_{\pi^*, \mu}}[g(X_T)]$, and the dynamics of the state process $X$ under $\mathbb{P}^\beta_{\pi^*, \mu}$ are given by
$$
\mathrm{d}X_t =\frac{\left[b(t,X_t,[G\mu_t](U))+\int \beta_t(U,X_t,v,y)B^\mu(t,v,y)\mu_t(\mathrm{d}v,\mathrm{d}y)\right]\mathrm{d}t + \sigma(t,X_t, [G\mu_t](U))\mathrm{d}W^{\pi^*, \mu, \beta}_t}{1+\mathbf{1}_{\{B^\mu(t,U,X_t)\geq 0\}}}
$$
with $B$ given in \eqref{E:eqn:3}. In order to write the corresponding HJB equation in its backward SDE form, we introduce the Hamiltonian $H$ defined for all $(t,u,x,\mu) \in [0,T] \times [0,1] \times \mathbb{R} \times \mathcal{P}^2_{\text{Unif}} ([0,1]\times \mathbb{R})$ by
\begin{align*}
    H_t (u,x,\mu,z) &:= \frac{zb(t,x,[G\mu](u)) +\sup_\beta z \int \beta_t(u,x,v,y)B^\mu(t,v,y)\mu(\mathrm{d}v, \mathrm{d}y)}{1+\mathbf{1}_{\{B^\mu(t,u,x)\geq 0\}}} \\
    & \; = \frac{zb(t,x,[G\mu](u)) + \int \left(z B^\mu(t,v,y)\right)^+\mu(\mathrm{d}v, \mathrm{d}y)}{1+\mathbf{1}_{\{B^\mu(t,u,x)\geq 0\}}},
\end{align*}
with maximizer $\hat{\beta} (t,v,y,\mu,z) := \mathbf{1}_{\{zB^\mu(t,v,y)\geq 0\}}$.

Consider the backward SDE:
$$
Y_T = g(X_T), \quad \text{and } \mathrm{d}Y_t = Z_t \mathrm{d}X_t - H_t(U,X_t, \mu_t, Z_t)\mathrm{d}t, \quad t \in [0,T], \text{ $\mu$-a.s.}
$$
Substituting the expression for the Hamiltonian $H$ and the equilibrium dynamics of $X$ corresponding to $\pi^*$, we rewrite the dynamics of $Y$ in terms of the Brownian motion $W$ as:
\begin{equation}
    Y_T = g(X_T), \quad \text{and } \mathrm{d}Y_t = F_t(U,X_t,Z_t)\mathrm{d}t + Z_t \Sigma(t,U,X_t, \mu_t)\mathrm{d}W_t, \text{ $\mathbb{P}$-a.s.,}
    \label{E:eqn:7}
\end{equation}
where the generator $F$ is given by
$$
F_t(u,x,z) := \frac{\int\left[z(B^\mu)^+ - (zB^\mu)^+\right](t,v,y) \mu_t(\mathrm{d}v, \mathrm{d}y)}{1+\mathbf{1}_{\{B^\mu(t,u,x)\geq 0\}}}.
$$
It follows from the standard result of \cite{PardPeng1990} that this backward SDE has a unique solution $(Y,Z)$ satisfying $\mathbb{E} [\sup_{t \leq T}|Y_t|^2 + \int_0^T |Z_t|^2 \mathrm{d}t] < \infty$.

We next prove that $Z \geq 0$, $\mathbb{P}$-a.s. Similar to the proof of Lemma 6.1 in \cite{DjeteTouzi2024}, we consider the candidate solution of the backward SDE:
$$
\overline{Y}_T = g(X_T), \quad \text{and } \mathrm{d}\overline{Y}_t = \overline{Z}_t \Sigma (t,U,X_t, \mu_t)\mathrm{d}W_t, \quad \text{$\mathbb{P}$-a.s.}
$$
The $\overline{Y}$-component of the unique solution of this equation is the $\mathbb{H}^2(\mu)$-process $\overline{Y}_t = \mathbb{E}^\mathbb{P} [g(X_T)|\mathcal{F}_t]$, $t \leq T$, and we are further reduced to verify that $\overline{Z} \geq 0$ by the unique solvability of \eqref{E:eqn:7}. Let us assume that $B$ admits bounded derivative w.r.t. $u$ and $\Sigma$ admits bounded derivative w.r.t. $(u,x)$. By the Clark-Ocone formula, we have
$$
\overline{Z}_t \Sigma(t, U, X_t, \mu_t) = \mathbb{E}^\mathbb{P} \left[(0, g^\prime(X_T)) D_t (U,X_T) | \mathcal{F}_t\right],
$$
where $D_t (U, X_T)$ is the Malliavin derivative of the diffusion process $(U,X)$, whose dynamics are given by
$$
\mathrm{d}\binom{U}{X_t} = \binom{0}{B^\mu (t, U, X_t)} \mathrm{d}t + \binom{0}{\Sigma^\mu(t,U,X_t)} \mathrm{d}W_t,
$$
and the associated first variation process is defined by
$$
\mathrm{d}Y_t = \left(
\begin{array}{cc}
0 & 0 \\
\partial_u B^\mu_t & \partial_x B^\mu_t
\end{array}
\right) Y_t \mathrm{d}t + \left(
\begin{array}{cc}
0 & 0 \\
\partial_u \Sigma^\mu_t & \partial_x \Sigma^\mu_t
\end{array}
\right) Y_t \mathrm{d}W_t, \quad Y_0 = I_2
$$
with $I_2$ the identity matrix of $\mathbb{R}^2$. More precisely, $Y$ solves the system
$$
\left\{
\begin{aligned}
    &Y^{1,1} = 1, \text{ and } Y^{1,2} = 0 \\
    &\mathrm{d}Y^{2, 1}_t = \left(\partial_u B_t^\mu + \partial_x B^\mu_t Y^{2, 1}_t\right) \mathrm{d}t + \left(\partial_u \Sigma_t^\mu + \partial_x \Sigma^\mu_t Y^{2, 1}_t\right) \mathrm{d}W_t, \quad Y^{2, 1}_0 = 0 \\
    & \mathrm{d}Y^{2, 2}_t = \partial_x B^\mu_t Y^{2, 2}_t \mathrm{d}t + \partial_x \Sigma^\mu_t Y^{2, 2}_t \mathrm{d}W_t, \quad Y^{2, 2}_0 = 1
\end{aligned}.
\right.
$$
The malliavin derivative of $(U, X)$ is then given by
\begin{align*}
D_t (U, X_T) = Y_T Y^{-1}_t \binom{0}{\Sigma^\mu_t} &= 
\left(
\begin{array}{cc}
1 & 0 \\
Y^{2, 1}_T & Y^{2, 2}_T
\end{array}
\right)
\left(
\begin{array}{cc}
1 & 0 \\
-Y^{2, 1}_t / Y^{2, 2}_t & 1/Y^{2, 2}_t
\end{array}
\right)\binom{0}{\Sigma^\mu_t} \\
& = \binom{0}{\Sigma^\mu_t Y^{2, 2}_T / Y^{2, 2}_t}, \quad t \leq T.
\end{align*}
Hence,
$
\overline{Z}_t = \mathbb{E}^\mathbb{P} [g^\prime(X_T) Y^{2, 2}_T/Y^{2, 2}_t | \mathcal{F}_t] \geq 0
$ since the tangent process $Y^{2,2}$ is positive and $g$ is nondecreasing. The rest of the proof follows in the same way as the one of Lemma 6.1 in \cite{DjeteTouzi2024}. Notice that under $Z \geq 0$, the generator $F$ of \eqref{E:eqn:7} vanishes, and we obtain that $Y_t = \mathbb{E}^\mathbb{P}[g(X_T) | \mathcal{F}_t]$, $t \in [0,T]$.

To complete the proof, we now show that $\mathbb{E}^\mathbb{P}[g(X_T)] = J_{\pi^*, \mu} (\pi^*) \geq J_{\pi^*, \mu}(\beta)$, for all $\beta \in \mathcal{A}$. Indeed, using the representation of $g(X_T)$ in terms of $Y$:
$$
J_{\pi^*, \mu}(\beta) = \mathbb{E}^{\mathbb{P}^\beta_{\pi^*, \mu}}[g(X_T)] = Y_0 + \mathbb{E}^{\mathbb{P}^\beta_{\pi^*, \mu}}\left[\int_0^T Z_t \mathrm{d}X_t - H_t(U,X_t, \mu_t,Z_t)\mathrm{d}t\right].
$$
Substituting the dynamics of $X$, it follows from appropriate localization that 
\begin{align*}
    & J_{\pi^*, \mu}(\beta) \\
    & \quad = Y_0 + \mathbb{E}^{\mathbb{P}^\beta_{\pi^*, \mu}} \left[\int_0^T \left(Z_t \frac{b(t,X_t,[G\mu_t](U)) + \int \beta_t B^\mu(t,v,y)\mu_t(\mathrm{d}v, \mathrm{d}y)}{1+\mathbf{1}_{\{B^\mu(t,U,X_t)\geq 0\}}} - H_t(U,X_t,\mu_t,Z_t)\right)\mathrm{d}t\right] \\
    & \quad \leq Y_0 = \mathbb{E}^\mathbb{P}[g(X_T)],
\end{align*}
where the last inequality follows from the definition of $H$. By the same argument, we see that the control $\pi^* \in \mathcal{A}$ allows to reach the last upper bound, i.e. $J_{\pi^*, \mu}(\pi^*) = \mathbb{E}^\mathbb{P}[g(X_T)]$.

\section{Proof of Theorem \ref{E:thm:2} (i)} \label{E:sec:1}
Recall that $\Pi := \Pi^N$ is defined in \eqref{E:eqn:18}. In the next lemma, for any $\mathbf{x}:=(x^1, \cdots, x^N) \in \mathbb{R}^N$ and $\mathbf{u}:=(u^1, \cdots, u^N) \in I^N_1 \times \cdots \times I^N_N$. Denote $m^{i, N}(\mathbf{x}) := \frac{1}{N} \sum_{j=1}^N \xi^N_{ij} \delta_{x^j}$.
\begin{lemma}
    There exists $C > 0$, independent of $N$ and $\beta \in [0,1]^N$, such that for all $t \in [0, T]$, $\mathbf{x}, \mathbf{y} \in \mathbb{R}^N$, we have
    \begin{align*}
        & \big|\mathbf{\Sigma}^{k,k} (t, \Pi^{-i}_t (\beta),\mathbf{u},\mathbf{x}) - \mathbf{\Sigma}^{k,k} (t, \Pi_t ,\mathbf{u}, \mathbf{x})\big| + \big|\mathbf{B}^k (t, \Pi^{-i}_t (\beta),\mathbf{u},\mathbf{x}) - \mathbf{B}^k (t, \Pi_t ,\mathbf{u},\mathbf{x})\big| \leq \frac{C}{N}, \; \text{for $k \neq i$}, \\
        & \big|\mathbf{\Sigma}^{k,k} (t, \Pi_t ,\mathbf{u},\mathbf{x}) - \mathbf{\Sigma}^{k,k} (t, \Pi_t ,\mathbf{u},\mathbf{y})\big| + \big|\mathbf{B}^k (t, \Pi_t ,\mathbf{u},\mathbf{x}) - \mathbf{B}^k (t, \Pi_t ,\mathbf{u},\mathbf{y})\big| \leq C \bigg(\phi^k + \frac{1}{N} \sum_{j=1}^N \phi^j\bigg) (t, \mathbf{x}, \mathbf{y}), \\
        & \text{where $\phi^j (t, \mathbf{x}, \mathbf{y}) := \big|(b, \sigma)(t,x^j, m^{j,N}(\mathbf{x})) - (b,\sigma)(t,y^j,m^{j,N}(\mathbf{y}))\big|$, and} \\
        & \sup_{1 \leq q \neq e \leq N} \big|\mathbf{\Sigma}^{e,q}(t, \Pi_t,\mathbf{u}, \mathbf{x})\big| + \sup_{1 \leq k \leq N} \left|\mathbf{\Sigma}^{k,k}(t, \Pi_t, \mathbf{u},\mathbf{x}) - \frac{\sigma(t, x^k, m^{k,N}(\mathbf{x}))}{1+\pi_t^k}\right| \leq \frac{C}{N}, \\
        & \big|\mathbf{\Sigma}^{k,k}(t, \Pi_t, \mathbf{u},\mathbf{x})\big| + \big|\mathbf{B}^k(t, \Pi_t, \mathbf{u},\mathbf{x})\big| \leq C\bigg[\big|(b,\sigma)(t,x^k, m^{k,N}(\mathbf{x}))\big| + \frac{1}{N}\sum_{j=1}^N \big|(b,\sigma)(t,x^j, m^{j,N}(\mathbf{x}))\big|\bigg].
    \end{align*}
    \label{E:lemma:2}
\end{lemma}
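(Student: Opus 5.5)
The plan is to exploit the explicit structure of the linear systems \eqref{E:eqn:23}--\eqref{E:eqn:24}. Write $b^k := b(t,x^k,m^{k,N}(\mathbf{x}))$ and $\sigma^k$ analogously. For the equilibrium matrix $\Pi_t$ we have $\gamma^{i,j}=\pi^j$ independently of $i$, so $\Gamma = \mathbf{1}\pi^\top$. Hence $\frac1N\sum_j\gamma^{j,i}=\pi^i$, and
$$M(\Pi_t)=\mathrm{diag}(1+\pi^k)-\tfrac1N\mathbf{1}\pi^\top,$$
a diagonal matrix plus a rank-one perturbation. By Sherman--Morrison (equivalently: set $S:=\frac1N\sum_j\pi^jB^j$, note $B^k=(b^k+S)/(1+\pi^k)$, multiply by $\pi^k/N$ and sum to get a scalar equation for $S$), one recovers exactly the formulas \eqref{E:eqn:8}:
$$B^k=\frac{b^k+\frac1N\sum_jA^jb^j}{1+\pi^k},\qquad \Sigma^{k,q}=\frac{\frac1N A^q\sigma^q+\sigma^k\mathbf{1}_{k=q}}{1+\pi^k}.$$
The key elementary bound is that the denominator in $A^j$ satisfies $1-\frac1N\sum_k\frac{\pi^k}{1+\pi^k}\ge \frac12$, since $\pi^k/(1+\pi^k)\le\frac12$. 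Hence $0\le A^j\le 1$ uniformly in $N$, $t$, $\mathbf{x}$.

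From these formulas, items three and four of the lemma are immediate. Each off-diagonal $\Sigma^{e,q}$ equals $\frac{1}{N}A^q\sigma^q/(1+\pi^e)$, which is at most $C/N$ because $\sigma$ is bounded. The diagonal entry differs from $\sigma^k/(1+\pi^k)$ by the same $O(1/N)$ term. For $B^k$, the bound $|b^k|+\frac1N\sum_j|b^j|$ follows from $A^j\le1$.

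For item two, the matrix $\Pi_t$ is held fixed while $\mathbf{x}$ is changed to $\mathbf{y}$. Thus $\pi^k$ and $A^j$ are the same on both sides, and the maps $\mathbf{x}\mapsto \mathbf{B}^k$ and $\mathbf{x}\mapsto \mathbf{\Sigma}^{k,k}$ are linear in the vector $(b^j,\sigma^j)_j$ with coefficients bounded as above. Subtracting gives the bound $C(\phi^k+\frac1N\sum_j\phi^j)$ directly.

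Item one, the effect of the deviation, is where the real work lies. Replacing row $i$ by $\beta$ destroys the rank-one structure, so I will not invert $M(\Pi^{-i}(\beta))$ globally. Instead I will use the equations row by row. Denote by $B'$ the solution for $\Pi^{-i}(\beta)$ and set $\delta_j:=\beta^j-\pi^j\in[-1,1]$. For $k\ne i$, row $k$ of $\Gamma'$ is still $\pi^\top$, but column sums change by $\delta_k/N$. This gives
$$B'^k=\frac{b^k+S'}{1+\pi^k+\delta_k/N},\qquad S':=\tfrac1N\textstyle\sum_j\pi^jB'^j,$$
while $B'^i=(b^i+\frac1N\sum_j\beta^jB'^j)/(1+\pi^i+\delta_i/N)$.

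The first substep is an a priori uniform bound on $B'$. I will obtain it from strict column diagonal dominance of $M(\Gamma')$ with margin $1$, which gives $\|M^{-1}\|_{1\to1}\le1$ and hence $\frac1N\sum_j|B'^j|\le\frac1N\sum_j|b^j|\le C$. Plugging this back into the row equations gives $|B'^k|\le C$ for all $k$, including $k=i$. Next I multiply the $k\ne i$ equations by $\pi^k/N$ and sum, obtaining a scalar equation for $S'$. It coincides with the equation for $S$ up to two kinds of error:
\begin{itemize}
\item the perturbation $\delta_k/N$ in the denominators;
\item the single term $\pi^iB'^i/N$ from row $i$.
\end{itemize}
Both are $O(1/N)$. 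Since the coefficient of $S'$ is at least $\frac12$, this yields $|S'-S|\le C/N$, and therefore $|B'^k-B^k|\le C/N$ for $k\ne i$. The same argument applies verbatim to each column $q$ of $\Sigma$, with $b^k$ replaced by $\sigma^k\mathbf{1}_{k=q}$; this gives the $\Sigma^{k,k}$ part.

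I expect the a priori bound on $B'^i$ together with the closure of the scalar equation to be the main obstacle. The column-dominance $\ell^1$ estimate is what I would try first there.
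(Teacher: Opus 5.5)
Your proposal is correct and is essentially the argument the paper has in mind. The paper omits the proof and defers to Lemma 7.1 of Djete--Touzi; in your version, once $\Pi_t$ is frozen the graphon enters only through the scalars $b^k,\sigma^k$, the rank-one structure $\Pi_t=\mathbf{1}\pi^\top$ gives the explicit formulas \eqref{E:eqn:8} with $0\le A^j\le 1$, and the deviated row is treated as an $O(1/N)$ perturbation. Your column-dominance bound does close the a priori estimate on the deviating component: the margin is exactly $1$, so $\|M(\Gamma)^{-1}\|_{1\to 1}\le 1$. The one claim you left unchecked also holds, because $\pi^k/(1+\pi^k+\delta_k/N)\le N/(2N-1)$ makes the coefficient of $S'$ at least $N/(2N-1)\ge \tfrac12$.
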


\begin{proof}
    Similar to Lemma 7.1 in \cite{DjeteTouzi2024}, hence omitted. 
\end{proof}

For all $N \in \mathbb{N}^*$, consider $(\beta^i)_{1 \leq i \leq N} := (\beta^{i,1}, \cdots, \beta^{i, N})_{1 \leq i \leq N}$. Recall that the $\mathbb{P}$-dynamics of the corresponding $i$-deviated equity process $\mathbf{X}^i := (X^{i,1}, \cdots, X^{i,N})$ satisfies for all $k =1, \cdots, N$:
\begin{align*}
    \mathrm{d}X^{i,k}_t & = \mathbf{B}^k (t, \Pi_t^{-i}(\beta^i_t), \mathbf{u}, \mathbf{X}^i_t) \mathrm{d}t + \sum_{q \neq i} \mathbf{\Sigma}^{k,q}(t, \Pi^{-i}_t(\beta^i_t), \mathbf{u}, \mathbf{X}^i_t) \mathrm{d}W^q_t \\
    & \quad + \mathbf{\Sigma}^{k,i} (t, \Pi^{-i}_t (\beta^i_t), \mathbf{u},\mathbf{X}^i_t) (\mathrm{d}W^i_t - \psi^i_t \mathrm{d}t), \text{ with $X^{i,k}_0 = X^k_0 \sim \mu_{u^k,0}$}.
\end{align*}

\begin{lemma}
    For all $k=1, \cdots, N$, we obtain that
    \begin{equation}
        \mathbb{E}^\mathbb{P} \bigg[\sup_{0 \leq t \leq T} \big|X_t^{i,k} - X_t^k\big|^2\bigg] \leq \frac{C}{N},
        \label{E:eqn:20}
    \end{equation}
    with $C$ independent of $N$. Denote by $\mu^N := \frac{1}{N} \sum_{j=1}^N \delta_{(u^j, X^j)}$ and $\mu^{i,N} := \frac{1}{N} \sum_{j=1}^N \delta_{(u^j, X^{i,j})}$ the corresponding empirical measures. Consequently, we have
    $$
    \lim_{N \rightarrow \infty} \frac{1}{N} \sum_{i=1}^N \mathbb{E}^\mathbb{P} \bigg[\sup_{0 \leq t \leq T} \big|X^i_t - X^{i,i}_t\big|^2\bigg] = 0 \text{ and } \lim_{N \rightarrow \infty} \frac{1}{N} \sum_{i=1}^N \mathbb{E}^\mathbb{P} \left[\mathcal{W}_2^2 (\mu^{i,N}, \mu^N)\right] = 0.
    $$
    \label{E:lemma:3}
\end{lemma}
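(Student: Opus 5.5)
The approach is a direct Gronwall argument. Under $\mathbb{P}$, I compare the $i$-deviated system $\mathbf{X}^i$ with the equilibrium system $\mathbf{X}$ coordinate by coordinate, treating the predictable matrix process $\Pi_t$ as a given exogenous input in both systems. Then Lemma \ref{E:lemma:2} applies: the indicator $\pi^k_t$ is frozen along the equilibrium path $\mathbf{X}$, so the maps $\mathbf{x}\mapsto \mathbf{B}(t,\Pi_t,\mathbf{u},\mathbf{x})$ and $\mathbf{x}\mapsto\mathbf{\Sigma}(t,\Pi_t,\mathbf{u},\mathbf{x})$ are Lipschitz. The same holds for $\Pi^{-i}_t(\beta^i_t)$, since Lemma \ref{E:lemma:2} is uniform in $\beta$.

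The key point is that the case $k=i$ is not $O(1)$, because the Girsanov correction compensates the drift change. Recall that $\psi^i_t$ is evaluated along $\mathbf{X}$. The $\mathbb{P}$-drift of $X^{i,i}$ is
\begin{equation*}
\mathbf{B}^i(t,\Pi^{-i}_t(\beta^i_t),\mathbf{u},\mathbf{X}^i_t)-\frac{\mathbf{\Sigma}^{i,i}(t,\Pi^{-i}_t(\beta^i_t),\mathbf{u},\mathbf{X}^i_t)}{\mathbf{\Sigma}^{i,i}(t,\Pi^{-i}_t(\beta^i_t),\mathbf{u},\mathbf{X}_t)}\Big(\mathbf{B}^i(t,\Pi^{-i}_t(\beta^i_t),\mathbf{u},\mathbf{X}_t)-\mathbf{B}^i(t,\Pi_t,\mathbf{u},\mathbf{X}_t)\Big).
\end{equation*}
Subtracting $\mathbf{B}^i(t,\Pi_t,\mathbf{u},\mathbf{X}_t)$ leaves the sum of two terms. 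The first is $\mathbf{B}^i(\Pi^{-i}(\beta^i),\mathbf{X}^i)-\mathbf{B}^i(\Pi^{-i}(\beta^i),\mathbf{X})$. The second is $(1-\text{ratio})$ times a bounded quantity, where $1-\text{ratio}$ is controlled by $|\mathbf{\Sigma}^{i,i}(\cdot,\mathbf{X}^i)-\mathbf{\Sigma}^{i,i}(\cdot,\mathbf{X})|$ because $\sigma$ is bounded below. By the second estimate of Lemma \ref{E:lemma:2}, both terms are bounded by $C(\phi^i+\frac1N\sum_j\phi^j)(t,\mathbf{X}^i_t,\mathbf{X}_t)$.

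For the diffusion of agent $i$, I use \eqref{E:eqn:24}. Replacing the $i$-th row of $\Gamma$ changes the self-holding term $\frac1N\sum_j\gamma^{j,i}$ only through the single entry $j=i$. It changes the term $\frac1N\sum_j\gamma^{i,j}\Sigma^{j,i}$ only through the off-diagonal entries, which are $O(1/N)$, together with the diagonal term $\frac1N\gamma^{i,i}\Sigma^{i,i}$. Hence $|\mathbf{\Sigma}^{i,i}(\Pi^{-i}(\beta))-\mathbf{\Sigma}^{i,i}(\Pi)|\le C/N$, in the same way as the first estimate of Lemma \ref{E:lemma:2}; I would verify this explicitly. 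For $k\neq i$, the first estimate of Lemma \ref{E:lemma:2} gives $O(1/N)$ changes in $\mathbf{B}^k$ and $\mathbf{\Sigma}^{k,k}$. The extra drift $-\mathbf{\Sigma}^{k,i}\psi^i$ is $O(1/N)$, since off-diagonal entries are $O(1/N)$ and $\psi^i$ is bounded. In the BDG bound, the off-diagonal contributions total $\sum_{q\neq k}|\cdot|^2\le N\cdot C/N^2=C/N$.

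Next I control $\phi^j$. By \eqref{E:eqn:54}, $\phi^j\le C\big(|x^j-y^j|+\mathcal{WOP}_2(m^{j,N}(\mathbf{x}),m^{j,N}(\mathbf{y}))\big)$. The two measures have the same mass $m$, so by the Remark after Assumption \ref{E:assump:1} and the diagonal coupling,
\begin{equation*}
\mathcal{WOP}_2\big(m^{j,N}(\mathbf{x}),m^{j,N}(\mathbf{y})\big)=m\,\mathcal{W}_2(\cdot,\cdot)\le\Big(\tfrac1N\textstyle\sum_l|x^l-y^l|^2\Big)^{1/2}.
\end{equation*}
Set $\Delta^k_t:=\mathbb{E}[\sup_{s\le t}|X^{i,k}_s-X^k_s|^2]$. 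BDG and Cauchy--Schwarz then yield, uniformly in $k$ (including $k=i$),
\begin{equation*}
\Delta^k_t\le \frac CN+C\int_0^t\Big(\Delta^k_s+\frac1N\sum_{l=1}^N\Delta^l_s\Big)\mathrm{d}s.
\end{equation*}
Averaging over $k$ and applying Gronwall gives $D_t:=\frac1N\sum_l\Delta^l_t\le C/N$. Substituting back and applying Gronwall again gives $\Delta^k_T\le C/N$ for every $k$, which is \eqref{E:eqn:20}. The constants depend only on the bounds on $b,\sigma$, on $\inf\sigma$, and on the Lipschitz constant, not on $N$, $i$ or $\beta$.

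The two consequences follow at once. The first is the average over $i$ of \eqref{E:eqn:20} with $k=i$, which is at most $C/N\to0$. For the second, I couple $\mu^{i,N}$ and $\mu^N$ via $(u^j,X^{i,j})\leftrightarrow(u^j,X^j)$. With the uniform metric on $\widehat{\Omega}$, this gives $\mathcal{W}_2^2(\mu^{i,N},\mu^N)\le\frac1N\sum_j\|X^{i,j}-X^j\|_T^2$, whose expectation is at most $C/N$ by \eqref{E:eqn:20}.

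I expect the main obstacle to be the $k=i$ diffusion estimate. Specifically, I need to check from \eqref{E:eqn:24}, or by inverting $M(\Gamma)$ as in \eqref{E:eqn:8}, that the change of the deviating row perturbs $\mathbf{\Sigma}^{i,i}$ only at order $1/N$. I would first try to extend the proof of Lemma 7.1 of \cite{DjeteTouzi2024} to the diagonal entry $k=i$.
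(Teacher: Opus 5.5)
Your proposal is correct and follows essentially the paper's route: bound $\mathcal{WOP}_2\big(m^{k,N}(\mathbf{X}^i_t),m^{k,N}(\mathbf{X}_t)\big)$ through the diagonal coupling as in \eqref{E:eqn:19}, insert this into the estimates of Lemma \ref{E:lemma:2}, and close with Gronwall (you average over $k$ before Gronwall and the paper takes $\sup_k$ directly, which makes no difference). Your explicit treatment of $k=i$ fills in a step the paper leaves implicit: the Girsanov drift $-\mathbf{\Sigma}^{i,i}\psi^i$ cancels the $O(1)$ deviation of $\mathbf{B}^i$, and $|\mathbf{\Sigma}^{i,i}(t,\Pi^{-i}_t(\beta),\mathbf{u},\mathbf{x})-\mathbf{\Sigma}^{i,i}(t,\Pi_t,\mathbf{u},\mathbf{x})|\le C/N$ holds because, after moving the $j=i$ term to the left of \eqref{E:eqn:24}, both matrices give the same coefficient $1+\frac{N-1}{N}\pi^i_t$ and all off-diagonal entries of $\mathbf{\Sigma}$ are at most $C/N$ for any $\Gamma\in[0,1]^{N\times N}$.
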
 

\begin{proof}
    For simplicity, we denote $B^{i,k}_t (\mathbf{x}) := \mathbf{B}^k(t, \Pi_t^{-i}(\beta^i_t), \mathbf{u}, \mathbf{x})$, $B^k_t(\mathbf{x}) := \mathbf{B}^k(t, \Pi_t, \mathbf{u},\mathbf{x})$, and similarly, we denote $\Sigma^{i,k,q}_t (\mathbf{x}) := \mathbf{\Sigma}^{k,q}(t, \Pi^{-i}_t(\beta^i_t), \mathbf{u}, \mathbf{x})$, $\Sigma^{k,q}_t(\mathbf{x}) := \mathbf{\Sigma}^{k,q}(t, \Pi_t, \mathbf{u}, \mathbf{x})$. Considering for any $k=1, \cdots, N$ and assuming that $\frac{1}{N} \sum_{j=1}^N \xi^N_{kj} \neq 0$, we obtain that
    \begin{align}
        \mathcal{WOP}_2^2 \big(m^{k,N}(\mathbf{X}^i_t), m^{k,N}(\mathbf{X}_t)\big) & = \bigg(\frac{1}{N} \sum_{j=1}^N \xi^N_{k j}\bigg)^2 \mathcal{W}_2^2 \bigg(\frac{\frac{1}{N} \sum_{j=1}^N \xi^N_{kj}\delta_{X^{i,j}_t}}{\frac{1}{N}\sum_{j=1}^N \xi_{kj}^N}, \frac{\frac{1}{N} \sum_{j=1}^N \xi^N_{kj}\delta_{X^{j}_t}}{\frac{1}{N}\sum_{j=1}^N \xi_{kj}^N}\bigg) \nonumber \\
        & \leq \frac{1}{N} \sum_{j=1}^N \xi^N_{k j} \cdot \frac{1}{N} \sum_{l=1}^N \xi^{N}_{kl} \mathcal{W}_2^2\left(\delta_{X^{i,l}_t}, \delta_{X^l_t}\right) \nonumber \\
        & \leq \frac{1}{N} \sum_{l=1}^N \big|X^{i,l}_t - X^l_t\big|^2,
        \label{E:eqn:19}
    \end{align}
    where we used the convexity of $\mathcal{W}_2^2 (\cdot, \cdot)$ for the second line and the boundedness of graphon for the last line. It is straightforward that \eqref{E:eqn:19} also holds for $\frac{1}{N} \sum_{j=1}^N \xi^N_{kj} = 0$. By the properties proved in Lemma \ref{E:lemma:2} and the inequality \eqref{E:eqn:19} obtained above, together with Assumption \ref{E:assump:1}, we show the existence of a constant $C$, independent of $N$, such that for any $k \in \{1, \cdots, N\}$:
    \begin{align*}
        \sup_{k} \mathbb{E}^\mathbb{P} \bigg[\sup_{t \in [0,T]} \big|X^{i,k}_t - X^k_t\big|^2\bigg]& \leq C \bigg(\frac{1}{N} + \sup_{k} \mathbb{E}^\mathbb{P} \bigg[\int_0^T \bigg(\big|X^{i,k}_s -X^k_s\big|^2 + \frac{1}{N} \sum_{j=1}^N \big|X^{i,j}_s - X^j_s\big|^2\bigg) \mathrm{d}s\bigg]\bigg) \\
        & \leq C \bigg(\frac{1}{N} + \int_0^T \sup_k \mathbb{E}^\mathbb{P}\bigg[\sup_{r \in [0,s]} \big|X_r^{i,k} - X^k_r\big|^2\bigg] \mathrm{d}s\bigg),
    \end{align*}
    which completes the proof of \eqref{E:eqn:20} by the Gronwall's inequality.
\end{proof}

The following lemma is essential for the proof under the context of general kernel.
\begin{lemma}
    There exists some positive constant $C$, independent of N, such that for any $t \in [0,T]$,
    $$
    \sup_{1 \leq i \leq N} \mathcal{WOP}_2^2 \Big([G^N \mu^N_t](u^i), [G \mu^N_t](u^i)\Big) \leq C \left(N^2\big\|G^N -G \big\|_\square +N^2 \sup_{1 \leq i,j \leq N} \mathcal{R}^N_{ij}\right)\bigg(1+ \frac{1}{N}\sum_{k=1}^N \big|X^k_t\big|^2\bigg),
    $$
    with $\mu^N_t := \frac{1}{N}\sum_{j=1}^N \delta_{(u^j, X^j_t)}$, where $\mathcal{R}^{N}_{ij}$ is defined as
    \begin{equation*}
        \mathcal{R}^N_{ij} := \int_{I^N_i \times I^N_j} \big|G(u,v) - G(u^i, u^j)\big| \mathrm{d}u \mathrm{d}v.
    \end{equation*}
    \label{E:lemma:4}
\end{lemma}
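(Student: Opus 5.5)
The plan is to compare the two positive measures through the explicit formula for $\mathcal{WOP}_2$. Fix $i$ and write
\[
\alpha_j := \tfrac1N \xi^N_{ij}, \qquad \beta_j := \tfrac1N G(u^i,u^j),
\]
so that $[G^N\mu^N_t](u^i)=\sum_j \alpha_j\delta_{X^j_t}$ and $[G\mu^N_t](u^i)=\sum_j\beta_j\delta_{X^j_t}$. These measures share atoms and differ only in their weights. Take the reference point $x_0=0$ and use the second expression for $\mathcal{WOP}_2^2$ from the notation section. This splits the quantity into three pieces, as in the proof of Lemma \ref{E:lemma:7}(a):
\[
(m_\alpha-m_\beta)^2, \qquad (m_\alpha-m_\beta)\big(M_0(\alpha)-M_0(\beta)\big), \qquad m_\alpha m_\beta\,\mathcal{W}_2^2(\bar\alpha,\bar\beta).
\]
The mass and moment terms are controlled by $\sum_j|\alpha_j-\beta_j|$ and $\sum_j|\alpha_j-\beta_j||X^j_t|^2$. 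For the Wasserstein term, I would apply \eqref{E:eqn:28} to the normalized measures. After multiplying by $m_\alpha m_\beta$, the inequality
\[
\big|m_\beta\alpha_j-m_\alpha\beta_j\big|\le m_\beta|\alpha_j-\beta_j|+\beta_j|m_\alpha-m_\beta|
\]
together with $m_\alpha,m_\beta\le1$ reduces it to the same two quantities. The degenerate cases $m_\alpha=0$ or $m_\beta=0$ are handled as in Lemma \ref{E:lemma:7}. The result is
\[
\mathcal{WOP}_2^2\le C\Big(\max_j|\xi^N_{ij}-G(u^i,u^j)|\Big)\Big(1+\tfrac1N\sum_k|X^k_t|^2\Big),
\]
since the weights are bounded by $1$.

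It then remains to bound $\max_{i,j}|\xi^N_{ij}-G(u^i,u^j)|$ by $C\big(N^2\|G^N-G\|_\square+N^2\sup_{i,j}\mathcal{R}^N_{ij}\big)$. For this step, take $S=I^N_i$ and $T=I^N_j$ in the definition of the cut norm. Since $G^N\equiv\xi^N_{ij}$ on $I^N_i\times I^N_j$, a rectangle of area $N^{-2}$,
\[
\frac{1}{N^2}\big|\xi^N_{ij}-G(u^i,u^j)\big| \le \Big|\int_{I^N_i\times I^N_j}(G^N-G)\Big| + \int_{I^N_i\times I^N_j}|G-G(u^i,u^j)| \le \|G^N-G\|_\square+\mathcal{R}^N_{ij}.
\]
Multiplying by $N^2$ gives the claim. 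The bound is uniform in $i$ and $t$, which yields the stated supremum.

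The main obstacle is the Wasserstein term $m_\alpha m_\beta\mathcal{W}_2^2(\bar\alpha,\bar\beta)$. Normalization could produce a factor $1/m_\alpha$ that blows up when the row mass is small. The fix is to keep the factor $m_\alpha m_\beta$ attached before invoking \eqref{E:eqn:28}, using the total-variation decomposition into positive and negative sets as in the proof of Lemma \ref{E:lemma:7}(b). Then every term is linear in the weight differences and no division by small masses survives.
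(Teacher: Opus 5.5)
Your proposal is correct and follows essentially the same route as the paper: the three-term expansion of $\mathcal{WOP}_2^2$ with $x_0=0$, and the bound \eqref{E:eqn:28} applied with the factor $m_\alpha m_\beta$ kept attached via $|m_\beta\alpha_j-m_\alpha\beta_j|\le m_\beta|\alpha_j-\beta_j|+\beta_j|m_\alpha-m_\beta|$, together with the entrywise cut-norm estimate on $I^N_i\times I^N_j$ (the paper's \eqref{E:eqn:56}). One small point in your favour is that, by using $|m_\alpha-m_\beta|\le 1$, you get a bound linear in $N^2\|G^N-G\|_\square+N^2\sup_{i,j}\mathcal{R}^N_{ij}$ directly; the paper instead bounds the mass and moment terms by the square of this quantity and leaves the passage to the stated linear bound implicit.
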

\begin{proof}
    For brevity of notation, we denote $G_{i j}:=G(u^i,u^j)$ and set $x_0 :=0$. By the property of the $\mathcal{WOP}_2$ metric, we have for the case of $\frac{1}{N}\sum_{j=1}^N G_{ij} \neq 0$ and $\frac{1}{N}\sum_{j=1}^N \xi^N_{ij} \neq 0$:
    \begin{align*}
        & \mathcal{WOP}_2^2 \Big([G^N \mu^N_t](u^i), [G \mu^N_t](u^i)\Big) \\
        & = \bigg[\frac{1}{N} \sum_{j=1}^N \left(\xi^N_{ij}-G_{ij}\right)\bigg]^2 + \bigg[\frac{1}{N} \sum_{j=1}^N \left(\xi^N_{ij}-G_{ij}\right)\bigg] \bigg[\frac{1}{N} \sum_{j=1}^N \left(\xi^N_{ij}-G_{ij}\right) \big|X_t^j\big|^2\bigg] \\
        & \quad + \bigg(\frac{1}{N} \sum_{j=1}^N \xi^N_{ij}\bigg) \bigg(\frac{1}{N}\sum_{j=1}^N G_{ij}\bigg) \mathcal{W}_2^2 \Big(\overline{[G^N \mu^N_t](u^i)}, \overline{[G \mu^N_t](u^i)}\Big)\\
        & =: \text{VI}_{i,N} +\text{VII}_{i,N}.
    \end{align*}
    
    By adding and subtracting terms, we have
    \begin{align}
        \left|\frac{1}{N}\sum_{j=1}^N \big(\xi^N_{ij} - G_{ij}\big)\right| & \leq N\left|\int_{I^N_i \times I} \big(G^N(u,v) - G(u,v)\big) \mathrm{d}u\mathrm{d}v\right| + N \sum_{j=1}^N \int_{I^N_i \times I^N_j} \big|G(u,v)-G_{ij}\big| \mathrm{d}u \mathrm{d}v \nonumber \\
        & \leq N \big\|G-G^N\big\|_\square + N \sum_{j=1}^N \mathcal{R}^N_{ij} \nonumber \\
        & \leq N^2 \big\|G-G^N\big\|_\square + N^2 \sup_{1 \leq j \leq N} \mathcal{R}^N_{ij} \label{E:eqn:55},
    \end{align}
    where we used the definition of the cut-norm for the second inequality, and similarly,
    \begin{align}
        \big|\xi^N_{ij} -G_{ij}\big| & \leq N^2 \left|\int_{I^N_i \times I^N_j} \big(G^N(u,v) -G(u,v)\big) \mathrm{d}u\mathrm{d}v\right| + N^2 \int_{I^N_i \times I^N_j} \big|G(u,v) -G_{ij}\big| \mathrm{d}u\mathrm{d}v \nonumber \\
        & \leq N^2 \big\|G-G^N\big\|_\square + N^2 \mathcal{R}^N_{ij}. \label{E:eqn:56}
    \end{align}
    Therefore, $\text{VI}_{i,N}$ can be upper bounded by 
    \begin{equation*}
        \text{VI}_{i,N} \leq \left(N^2 \big\|G-G^N\big\|_\square + N^2 \sup_{1 \leq j \leq N} \mathcal{R}^N_{ij}\right)^2\left(1 + \frac{1}{N} \sum_{k=1}^N |X^k_t|^2\right).
    \end{equation*}
    
    To bound $\text{VII}_{i,N}$, we control the 2-Wasserstein distance by means of a weighted total variation distance, recall \eqref{E:eqn:28}, and then use \eqref{E:eqn:55}-\eqref{E:eqn:56}
    \begin{align*}
        \text{VII}_{i,N} & \leq  2\bigg(\frac{1}{N} \sum_{j=1}^N \xi^N_{ij}\bigg) \bigg(\frac{1}{N}\sum_{j=1}^N G_{ij}\bigg) \frac{1}{N} \sum_{k=1}^N \bigg|\frac{\xi^N_{ik}}{\frac{1}{N} \sum_{j=1}^N \xi^N_{ij}} -\frac{G_{ik}}{\frac{1}{N} \sum_{j=1}^N G_{ij}}\bigg| \big|X^k_t\big|^2 \\
        & \leq 2 \bigg(\frac{1}{N}\sum_{j=1}^N G_{ij}\bigg) \frac{1}{N} \sum_{k=1}^N \big|\xi^N_{ik} -G_{ik}\big| \big|X^k_t\big|^2 + 2 \frac{1}{N} \sum_{k=1}^N G_{ik} \bigg|\frac{1}{N} \sum_{j=1}^N G_{ij}-\frac{1}{N} \sum_{j=1}^N \xi^N_{ij}\bigg| \big|X^k_t\big|^2 \\
        & \leq  4 \left(N^2 \big\|G-G^N\big\|_\square + N^2 \sup_{1 \leq j \leq N} \mathcal{R}^N_{ij}\right) \frac{1}{N}\sum_{k=1}^N |X^k_t|^2.
    \end{align*}
    
    Next we deal with degenerate situations, i.e. either $\frac{1}{N}\sum_{j=1}^N \xi^N_{ij} = 0$ or $\frac{1}{N}\sum_{j=1}^N G_{ij} = 0$. For the case of $\frac{1}{N}\sum_{j=1}^N \xi^N_{ij} = 0$, using the almost sure finiteness of $X^j_t$, we obtain that
    \begin{align*}
        \mathcal{WOP}_2^2 \Big([G^N \mu^N_t](u^i), [G \mu^N_t](u^i)\Big) \leq \left(N^2 \big\|G-G^N\big\|_\square + N^2 \sup_{1 \leq j \leq N} \mathcal{R}^N_{ij}\right)^2\left(1 + \frac{1}{N} \sum_{k=1}^N |X^k_t|^2\right).
    \end{align*}
\end{proof}

\subsection{Proof of Theorem \ref{E:thm:2} (i): sampling kernel} \label{E:subsec:1}
Using Lemma \ref{E:lemma:3}, one can verify that
$$
\lim_{N \rightarrow \infty} \mathcal{W}_2 \bigg(\frac{1}{N}\sum_{i=1}^N \mathcal{L}^\mathbb{P} \left(Z^i, u^i, X^{i,i}, \mu^{i,N}\right), \frac{1}{N}\sum_{i=1}^N \mathcal{L}^\mathbb{P}\left(Z^i,u^i,X^i,\mu^N\right)\bigg) = 0.
$$
Consequently, we may instead focus on the sequence $\big(\frac{1}{N}\sum_{i=1}^N \mathcal{L}^\mathbb{P}(Z^i,u^i, X^i,\mu^N)\big)_{N \geq 1}$. Recall the dynamics of $(Z^i, X^i)$:
\begin{align}
    \mathrm{d}X^i_t &= B^i(t, \mathbf{u}, \mathbf{X}_t, \mu^N_t) \mathrm{d}t + \sum_{j=1}^N \Sigma^{i,j}(t, \mathbf{u}, \mathbf{X}_t, \mu^N_t) \mathrm{d}W^j_t, \label{E:eqn:30} \\
    \frac{\mathrm{d}Z^i_t}{Z^i_t} & = \frac{\mathbf{B}^i (t, \Pi^{-i}_t(\beta^i_t), \mathbf{u},\mathbf{X}_t) - \mathbf{B}^i(t, \Pi_t, \mathbf{u},\mathbf{X}_t)}{\mathbf{\Sigma}^{i,i}(t, \Pi^{-i}_t (\beta^i_t), \mathbf{u},\mathbf{X}_t)} \mathrm{d}W^i_t. \label{E:eqn:31}
\end{align} 
Firstly, we notice that $B^i_t := \mathbf{B}^i (t, \Pi_t, \mathbf{u},\mathbf{X}_t) = B^i(t, \mathbf{u}, \mathbf{X}_t, \mu^N_t)$ and $\Sigma^{i,j}_t := \mathbf{\Sigma}^{i,j} (t, \Pi_t, \mathbf{u},\mathbf{X}_t) = \Sigma^{i,j}(t, \mathbf{u}, \mathbf{X}_t, \mu^N_t)$. Denote $B^{i,i}_t (\mathbf{X}_t) := \mathbf{B}^i (t, \Pi^{-i}_t(\beta^i_t), \mathbf{u}, \mathbf{X}_t)$ and $\Sigma^{i,i,i}_t (\mathbf{X}_t) :=\mathbf{\Sigma}^{i,i}(t, \Pi^{-i}_t (\beta^i_t), \mathbf{u},\mathbf{X}_t)$. 

We define 
\begin{equation}
    q^{i,N}_t (\mathrm{d}\alpha, \mathrm{d}u, \mathrm{d}x) := \frac{1}{N} \sum_{j = 1}^N \delta_{(\beta^{i,j}_t, u^j, X^j_t)} (\mathrm{d}\alpha, \mathrm{d}u, \mathrm{d}x) \in \mathcal{P} ([0,1]^2 \times \mathbb{R}),
\label{E:eqn:27}
\end{equation}
and for each $\mu \in \mathcal{P}([0,1] \times \mathbb{R})$, one considers the Borel set $\mathbb{Z}_\mu$ defined as follows:
$$
\mathbb{Z}_\mu := \left\{m \in \mathcal{P}([0,1]^2 \times \mathbb{R}): m([0,1], \mathrm{d}u, \mathrm{d}x) = \mu(\mathrm{d}u, \mathrm{d}x)\right\}.
$$
Using $G^N(u^i,u^j) = G(u^i,u^j)$ for all $i,j$, we obtain that
\begin{equation}
    [G^N \mu^N_t](u^i) := \frac{1}{N}\sum_{j=1}^N G^N(u^i, u^j) \delta_{X^j_t} = \frac{1}{N}\sum_{j=1}^N G(u^i, u^j) \delta_{X^j_t} =: [G \mu^N_t](u^i).
    \label{E:eqn:44}
\end{equation}

Using \eqref{E:eqn:23}, \eqref{E:eqn:44} and by adding and subtracting terms, we have
\begin{align*}
    & (1+\pi^i_t) B^i_t = \frac{\frac{1}{N} \sum_{j=1}^N \frac{\pi_t^j}{1+\pi_t^j} b(t, X^j_t, [G^N \mu^N_t](u^j))}{1-\frac{1}{N}\sum_{k=1}^N \frac{\pi^k_t}{1+\pi_t^k}} +b (t,X^i_t, [G^N \mu^N_t](u^i)) \\
    & = \frac{\frac{1}{N}\sum_{j=1}^N \frac{\pi^j_t}{1+\pi^j_t} \left(b(t, X^j_t, [G \mu^N_t](u^j)) + c(t,\mu^N_t)\right)- c(t, \mu^N_t) \frac{1}{N}\sum_{j=1}^N \frac{\pi^j_t}{1+\pi^j_t}}{1-\frac{1}{N}\sum_{k=1}^N\frac{\pi^k_t}{1+\pi^k_t}} + b (t,X^i_t, [G \mu^N_t](u^i)) \\
    & = \frac{\frac{1}{2 N}\sum_{j=1}^N \left(b(t, X^j_t, [G \mu^N_t](u^j)) + c(t,\mu^N_t)\right)^+ - c(t, \mu^N_t) \frac{1}{N}\sum_{j=1}^N \frac{\pi^j_t}{1+\pi^j_t}}{1-\frac{1}{N}\sum_{k=1}^N\frac{\pi^k_t}{1+\pi^k_t}} + b (t,X^i_t, [G \mu^N_t](u^i)) \\
    & = c(t, \mu^N_t) + b (t,X^i_t, [G \mu^N_t](u^i)),
\end{align*}
which further gives that
\begin{equation}
    B^i_t = \frac{c(t, \mu^N_t) + b (t,X^i_t, [G \mu^N_t](u^i))}{1+\pi^i_t} = B(t,u^i, X^i_t, \mu^N_t),
    \label{E:eqn:45}
\end{equation}
with $B$ given in \eqref{E:eqn:3}. Using \eqref{E:eqn:23}, \eqref{E:eqn:27} and \eqref{E:eqn:45} gives
\begin{align}
    & \bigg(1 + \frac{N-1}{N} \pi^i_t + \frac{1}{N} \beta^{i,i}_t\bigg) B^{i,i}_t = \frac{1}{N} \sum_{j =1}^N \beta_t^{i,j} B_t^{i,j} + b(t,X_t^i, [G^N \mu^N_t](u^i)) \nonumber \\
    & \quad = \frac{1}{N} \sum_{j=1}^N \beta^{i,j}_t B(t, u^j, X^j_t, \mu^N_t) + b(t,X_t^i, [G \mu^N_t](u^i)) + \frac{1}{N}\sum_{j=1}^N \beta^{i,j}_t (B^{i,j}_t - B^j_t)  \nonumber \\
    & \quad = \int \alpha B(t,u,x,\mu^N_t) q^{i,N}_t (\mathrm{d}\alpha, \mathrm{d}u, \mathrm{d}x) + b(t,X_t^i, [G \mu^N_t](u^i))  + \frac{1}{N}\sum_{j=1}^N \beta^{i,j}_t (B^{i,j}_t - B^j_t). 
    \label{E:eqn:29}
\end{align} 
Similarly, we have
\begin{equation}
    \Sigma^{i,j}_t = \frac{\frac{1}{N}A^j_t(\mathbf{u},\mathbf{X}_t)\sigma(t,X^j_t, [G^N \mu^N_t](u^j))}{1+\pi^i_t} = \frac{\frac{1}{N}A^j_t(\mathbf{u},\mathbf{X}_t)\sigma(t,X^j_t, [G \mu^N_t](u^j))}{1+\pi^i_t} , \quad \text{for } j \neq i,
\end{equation}
and for the case of $j=i$, we have
\begin{equation}
    \Sigma^{i,i}_t = \frac{\left(\frac{1}{N}A^i_t(\mathbf{u},\mathbf{X}_t)+1\right) \sigma(t,X^i_t, [G^N \mu^N_t](u^i)) }{1+\pi^i_t}  = \Sigma (t,u^i, X^i_t, \mu^N_t) +\frac{A^i_t(\mathbf{u},\mathbf{X}_t) \sigma(t,X^i_t, [G \mu^N_t](u^i)) }{N(1+\pi^i_t)},
\end{equation}
where $\Sigma$ is defined in \eqref{E:eqn:4}. For $\Sigma^{i,i,i}_t$, it follows from \eqref{E:eqn:24} that
\begin{align}
    \bigg(1 + \frac{N-1}{N}\pi^i_t + \frac{1}{N} \beta^{i,i}_t\bigg) \Sigma^{i,i,i}_t = \frac{1}{N}\sum_{j=1}^N \beta^{i,j}_t \Sigma^{i,j,i}_t + \sigma(t,X^i_t, [G \mu^N_t](u^i)).
    \label{E:eqn:26}
\end{align}

On some filtered probability space $(\Omega, \mathcal{F}, \mathbb{F}, \mathbb{P})$, suppose that $(U, X, Z, W)$ is a weak solution of the system:
\begin{align*}
    & \mathrm{d}X_t = B(t, U, X_t, \mu_t) \mathrm{d}t + \Sigma(t,U,X_t, \mu_t) \mathrm{d}W_t \quad \text{with } \mu_t = \mathcal{L}(U,X_t) \in \mathcal{P}_{\text{Unif}}([0,1]\times \mathbb{R}), \\
    & \frac{\mathrm{d}Z_t}{Z_t} = \int \frac{1+\pi(t, U, X_t, \mu_t)}{\sigma (t, X_t, [G \mu_t](U))} \bigg[\frac{\int \alpha B(t,u,x,\mu_t) q(\mathrm{d}\alpha, \mathrm{d}u, \mathrm{d}x) + b(t,X_t,[G\mu_t](U))}{1+\pi(t,U, X_t, \mu_t)} \\
    & \hspace{5cm} - B(t,U,X_t, \mu_t)\bigg] M(\mathrm{d}q, \mathrm{d}t),
\end{align*}
where $M(\mathrm{d}q, \mathrm{d}t)$ is a martingale measure with quadratic variation $\Lambda:=(\Lambda_t)_{t \in [0,T]}$, which is a set of $\mathcal{P}(\mathcal{P}([0,1]^2 \times \mathbb{R}))$-valued $\mathbb{F}$-predictable processes, satisfying $\Lambda_t(\mathbb{Z}_{\mu_t}) =1$, $\mathrm{d}t \otimes \mathrm{d}\mathbb{P}-a.e.$, and $W_t = M(\mathcal{P}([0,1]^2 \times \mathbb{R}) \times [0,t])$, $t \in [0,T]$, defines a Brownian motion (see, e.g. \cite{Karoui1990}). Using the martingale problem formulation, $(U,X,Z,W)$ is a weak solution of the last SDE if for any $f \in C_b^2 ([0,1] \times \mathbb{R}^3)$, the process
\begin{align*}
    M_t^{f}(U, X, Z, W, \Lambda, \mu):= f(U, X_t, Z_t, W_t) - \int_0^t \int \mathcal{L}^{\mu_s,q}_s f(U,X_s,Z_s,W_s) \Lambda_s(\mathrm{d}q)\mathrm{d}s, \; t \in [0,T],
\end{align*}
is a $(\mathbb{F}, \mathbb{P})$-martingale, where $\mathcal{L}^{\mu_s,q}_s$ is the generator of $(U, X, Z, W)$ defined by
\begin{align*}
    & \mathcal{L}^{\mu_s,q}_s f(U,X_s,Z_s,W_s) := \partial_x f(U, X_s, Z_s, W_s) B^\mu(s,U,X_s) + \frac{1}{2}\partial_x^2 f(U, X_s, Z_s, W_s) \left(\Sigma^\mu(s,U,X_s)\right)^2 \\
    & \quad + \frac{1}{2} \partial_w^2 f(U, X_s, Z_s, W_s) + \partial_{x w} f(U, X_s, Z_s, W_s)\Sigma^\mu (s, U, X_s) \\
    & \quad + \frac{1}{2} \partial_z^2 f(U, X_s, Z_s, W_s) Z_s^2 \Bigg\{\bigg[\frac{\int \alpha B^\mu (s,u,x) q(\mathrm{d}\alpha, \mathrm{d}u, \mathrm{d}x) + b(s,X_s,[G\mu_s](U))}{1+\pi(s,U, X_s, \mu_s)} \\
    & \quad \hspace{5cm} - B^\mu (s,U,X_s)\bigg]\frac{1+\pi(s, U, X_s, \mu_s)}{\sigma (s, X_s, [G \mu_s](U))}\Bigg\}^2 \\
    & \quad + \partial_{x z} f(U, X_s, Z_s, W_s) Z_s \bigg[\frac{\int \alpha B^\mu (s,u,x) q(\mathrm{d}\alpha, \mathrm{d}u, \mathrm{d}x) + b(s,X_s,[G\mu_s](U))}{1+\pi(s,U, X_s, \mu_s)} - B^\mu(s,U,X_s)\bigg] \\
    & \quad + \partial_{w z} f(U, X_s, Z_s, W_s) Z_s\frac{1+\pi(s, U, X_s, \mu_s)}{\sigma (s, X_s, [G \mu_s](U))}\bigg[\frac{\int \alpha B^\mu(s,u,x) q(\mathrm{d}\alpha, \mathrm{d}u, \mathrm{d}x) + b(s,X_s,[G\mu_s](U))}{1+\pi(s,U, X_s, \mu_s)} \\
    & \quad \hspace{7.5cm} - B^\mu (s,U,X_s)\bigg].
\end{align*}

The rest of the proof will be divided into several steps.

\vspace{1mm}

\noindent Step 1: Coefficients asymptotics. Then, it follows from \eqref{E:eqn:45}-\eqref{E:eqn:26} together with Lemma \ref{E:lemma:2} that
\begin{equation}
\begin{aligned}
    \delta^{N}_t &= \sup_{1 \leq i \leq N} \left|B^{i,i}_t - \frac{\int \alpha B(t,u,x,\mu^N_t) q^{i,N}_t (\mathrm{d}\alpha, \mathrm{d}u, \mathrm{d}x) + b(t,X_t^i, [G \mu^N_t](u^i))}{1+\pi^i_t} \right| \longrightarrow 0, \\ 
    r^{N}_t &= \sup_{1 \leq i \leq N} \left|\Sigma^{i,i,i}_t - \frac{\sigma(t,X^i_t, [G \mu^N_t](u^i) )}{1+ \pi^i_t}\right| + \sup_{1 \leq i \leq N} \left|\Sigma^{i,i}_t - \frac{\sigma(t,X^i_t, [G \mu^N_t](u^i) )}{1+ \pi^i_t}\right| + \sup_{i \neq j} \left|\Sigma^{i,j}_t\right| \longrightarrow 0.
\end{aligned}
\label{E:eqn:46}
\end{equation}

\vspace{1mm}

\noindent Step 2: Identification of the limit. Let $E:=\mathcal{P}([0,1]^2\times \mathbb{R})$, and denote by $\textbf{M}(E)$ the space of all Borel measures $q(\mathrm{d}t, \mathrm{d}e)$ on $[0, T] \times E$, whose marginal distribution on $[0,T]$ is the Lebesgue measure $\mathrm{d}t$, i.e. $q(\mathrm{d}t, \mathrm{d}e) = q_t(\mathrm{d}e)\mathrm{dt}$ for some family $(q_t)_{t \in [0,T]}$ of Borel probability measures on $E$. We denote by $\Lambda$ the canonical element of $\mathbf{M}(E)$ and set
$$
\Lambda_{t \wedge \cdot} (\mathrm{d}s, \mathrm{d}e):=\Lambda (\mathrm{d}s, \mathrm{d}e) \big|_{[0,t]\times E} + \delta_{e_0}(\mathrm{d}e)\mathrm{d}s \big|_{(t,T] \times E}, \; \text{for some fixed $e_0 \in E$}.
$$
Let $\widetilde{\Omega}:= I \times \mathcal{C}^3 \times \mathbf{M}(E) \times \mathcal{P}(I \times \mathcal{C})$, and denote the corresponding canonical process and canonical filtration by $(\widetilde{U}, \widetilde{X}, \widetilde{Z}, \widetilde{W}, \widetilde{\Lambda}, \widetilde{\mu})$ and $\widetilde{\mathbb{F}}:=(\widetilde{\mathcal{F}}_t)_{t \in [0,T]}$ with $\widetilde{\mathcal{F}}_t := \sigma\{\widetilde{U}, \widetilde{X}_{t \wedge \cdot}, \widetilde{Z}_{t \wedge \cdot}, \widetilde{W}_{t \wedge \cdot}, \widetilde{\Lambda}_{t \wedge \cdot}, \widetilde{\mu}_{t \wedge \cdot}\}$. We also denote by $(\overline{\mu}, \mu)$ the canonical variable on $\mathcal{P}(\widetilde{\Omega}) \times \mathcal{P}(I \times \mathcal{C})$.

We denote for any $N \in \mathbb{N}^*$:
$$
\widehat{\mathrm{P}}^N := \mathcal{L}^\mathbb{P}\big(\overline{\mu}^N, \mu^N\big) \in \mathcal{P}(\mathcal{P}(\widetilde{\Omega}) \times \mathcal{P}(I \times \mathcal{C})),
$$
where $\overline{\mu}^N$ and $\mu^N$ are defined as
$$
\overline{\mu}^N := \frac{1}{N}\sum_{i=1}^N \delta_{(u^i, X^i, Z^i, W^i, \Lambda^i, \mu^N)}, \quad \text{and } \mu^N := \frac{1}{N}\sum_{i=1}^N \delta_{(u^i, X^i)}, \quad \text{where }\Lambda^i (\mathrm{d}q, \mathrm{d}t) := \delta_{q^{i,N}_t} (\mathrm{d}q) \mathrm{d}t.
$$
Under Assumption \ref{E:assump:2}, together with the boundedness of $(b, \sigma)$, we can verify that $(\widehat{\mathrm{P}}^N)_{N \in \mathbb{N}^*}$ is relatively compact in $\mathcal{W}_2$ (Corollary B.2 in \cite{Lacker2015}), and denote by $\widehat{\mathrm{P}}^\infty$ the limit of some sub-sequence. For simplicity, we use the same notation for the sequence and its sub-sequence. We next show that for $\widehat{\mathrm{P}}^\infty$-a.s. $\omega \in \mathcal{P}(\widetilde{\Omega}) \times \mathcal{P}(I \times \mathcal{C})$:
\begin{equation}
    \big(M^{f}_t (\widetilde{U}, \widetilde{X}, \widetilde{Z}, \widetilde{W}, \widetilde{\Lambda}, \widetilde{\mu})\big)_{t \in [0,T]} \text{ is an $(\widetilde{\mathbb{F}}, \overline{\mu}(\omega))$-martingale for all $f \in C_b^2([0,1] \times \mathbb{R}^3)$}.
    \label{E:eqn:33}
\end{equation} 

To prove this, we introduce $M^{f,i}$, the martingale associated to $(u^i, X^i, Z^i, W^i)$ whose dynamics are recalled in \eqref{E:eqn:30}-\eqref{E:eqn:31}:
\begin{align*}
    M^{f, i}_t & := f(u^i, X^i_t, Z^i_t, W^i_t) - \int_0^t\bigg[ \partial_x f(u^i, X_s^i, Z^i_s, W^i_s) B^i_s + \frac{1}{2} \partial_x^2 f(u^i,X^i_s, Z^i_s,W^i_s) \sum_{j=1}^N\left(\Sigma^{i,j}_s\right)^2 \\
    & \qquad + \frac{1}{2} \partial_z^2 f(u^i, X^i_s,Z^i_s,W^i_s) \big(Z^i_s\big)^2 \bigg(\frac{B^{i,i}_s - B^i_s}{\Sigma^{i,i,i}_s}\bigg)^2 + \frac{1}{2} \partial_w^2 f(u^i, X^i_s, Z^i_s, W^i_s) \\
    & \qquad + \partial_{xz} f(u^i, X^i_s, Z^i_s, W^i_s)\Sigma^{i,i}_s Z^i_s  \frac{B^{i,i}_s - B^i_s} {\Sigma^{i,i,i}_s}+ \partial_{xw} f(u^i, X^i_s, Z^i_s, W^i_s) \Sigma^{i,i}_s \\
    & \qquad + \partial_{wz} f(u^i, X^i_s, Z^i_s, W^i_s) Z^i_s  \frac{B^{i,i}_s - B^i_s} {\Sigma^{i,i,i}_s} \bigg]\mathrm{d}s.
\end{align*}
We have that
\begin{align}
    M^{f, i}_t & = f (u^i, X^i_0, Z^i_0, W^i_0) + \bigg[\int_0^t \partial_x f(u^i, X^i_s, Z^i_s, W^i_s) \sum_{j=1}^N \Sigma^{i,j}_s\mathrm{d}W^j_s + \partial_w f(u^i,X^i_s, Z^i_s, W^i_s) \mathrm{d}W^i_s \nonumber \\
    & \hspace{4cm} + \partial_z f(u^i, X^i_s,Z^i_s, W_s^i) Z^i_s \frac{B^{i,i}_s - B^i_s}{\Sigma^{i,i,i}_s}\mathrm{d}W^i_s\bigg].
\label{E:eqn:47}
\end{align}

To handle the singularity appearing in the $M^f_t$, we first notice that for any $h \in C_b ([0,1])$:
\begin{equation*}
    \mathbb{E}^{\widehat{\mathrm{P}}^\infty}\Big[\big|\mathbb{E}^{\overline{\mu}}\big[h(\widetilde{U})\big] -\langle h, \mathrm{d}u\rangle\big|^2\Big]  = \lim_{N \rightarrow \infty} \mathbb{E}^\mathbb{P} \left[\bigg|\frac{1}{N}\sum_{i=1}^N h(u^i) -\langle h, \mathrm{d}u\rangle\bigg|^2\right] \nonumber  =   0,  
\end{equation*}
which further implies that
\begin{equation}
    \overline{\mu}(\omega) \circ (\widetilde{U})^{-1} = \text{Unif}([0,1]), \quad \text{for $\widehat{\mathrm{P}}^\infty$$-$a.s. $\omega \in \mathcal{P}(\widetilde{\Omega}) \times \mathcal{P}(I \times \mathcal{C})$}.
    \label{E:eqn:34}
\end{equation}

By Proposition A.1 of \cite{DjeteTouzi2024}, for $\widehat{\mathrm{P}}^\infty$-a.s. $\omega$ and a.e. $u \in [0,1]$, $\mathcal{L}^{\overline{\mu}(\omega)}(\widetilde{X}_t | \widetilde{U}=u) (\mathrm{d}x)\mathrm{d}t$ has a density w.r.t. the Lebesgue measure on $[0,T] \times \mathbb{R}$. We therefore obtain by Assumption \ref{E:assump:1} (ii) together with Lemma \ref{E:lemma:7} that the discontinuous points of the map $\pi (\cdot)$ are $\overline{\mu}(\omega)$-negligible, for $\widehat{\mathrm{P}}^\infty$-a.s. $\omega$. Let $\Phi:\widetilde{\Omega} \rightarrow \mathbb{R}$ be any bounded continuous function. Then, by the Portmanteau theorem (with $\widetilde{\mu}$ restricted to the set $\mathcal{S}$, defined in \eqref{E:eqn:51}) as well as Lemma \ref{E:lemma:7}, we obtain that
\begin{align}
    & \mathbb{E}^{\widehat{\mathrm{P}}^\infty} \left[\mathbb{E}^{\overline{\mu}} \left[\left(M^{f}_t - M^{f}_s\right) \big(\widetilde{U},\widetilde{X}, \widetilde{Z}, \widetilde{W}, \widetilde{\Lambda}, \widetilde{\mu}\big) \Phi\big(\widetilde{U},\widetilde{X}_{s \wedge \cdot}, \widetilde{Z}_{s \wedge \cdot}, \widetilde{W}_{s \wedge \cdot}, \widetilde{\Lambda}_{s \wedge \cdot}, \widetilde{\mu}_{s \wedge \cdot}\big)\right]^2\right] \nonumber \\
    & =  \lim_{N \rightarrow \infty} \mathbb{E}^{\mathbb{P}}  \left[\bigg|\frac{1}{N}\sum_{i=1}^N\left(M^{f}_t - M^{f}_s\right) \big(u^i,X^i, Z^i, W^i, \Lambda^i, \mu^N\big) \Phi\big(u^i,X^i_{s \wedge \cdot}, Z^i_{s \wedge \cdot}, W^i_{s \wedge \cdot}, \Lambda^i_{s \wedge \cdot}, \mu^N_{s \wedge \cdot}\big)\bigg|^2\right] \nonumber \\
    & = \lim_{N \rightarrow \infty} \mathbb{E}^\mathbb{P} \left[\bigg|\frac{1}{N} \sum_{i=1}^N  \left(M^{f,i}_t - M^{f,i}_s\right) \Phi\big(u^i,X^i_{s \wedge \cdot}, Z^i_{s \wedge \cdot}, W^i_{s \wedge \cdot}, \Lambda^i_{s \wedge \cdot}, \mu^N_{s \wedge \cdot}\big)\bigg|^2\right]  \nonumber \\
    & = 0,
    \label{E:eqn:35}
\end{align}
where we used \eqref{E:eqn:46} for the second equality, and then used \eqref{E:eqn:47}, the independence of $(W^i)_{1 \leq i \leq N}$ as well as Lemma \ref{E:lemma:2} for the last equality. We thus finish verifying \eqref{E:eqn:33} by the arbitrariness of $f$. We can also easily verify that
$$
\overline{\mu}(\omega) \left[\widetilde{\mu} = \mathcal{L}^{\overline{\mu}(\omega)}\big(\widetilde{U}, \widetilde{X}\big) = \mu(\omega)\right] = 1, \quad \text{for $\widehat{\mathrm{P}}^\infty$-a.s. $\omega \in \mathcal{P}(\widetilde{\Omega}) \times \mathcal{P}(I \times \mathcal{C})$}
$$
and $\widetilde{\Lambda}_t (\mathbb{Z}_{\widetilde{\mu}_t}) =1$, $\mathrm{d}t \otimes \mathrm{d} \overline{\mu}(\omega)$-a.s. Consequently, for $\widehat{\mathrm{P}}^\infty$-a.s. $\omega$, possibly on an extension of $(\widetilde{\Omega},\widetilde{\mathcal{F}}, \widetilde{\mathbb{F}}, \overline{\mu}(\omega))$, there exists a martingale measure $\widetilde{M}(\mathrm{d}q, \mathrm{d}t)$ with quadratic variation $\widetilde{\Lambda}$ such that the process $(\widetilde{U}, \widetilde{X}, \widetilde{Z}, \widetilde{W})$ satisfies
$\widetilde{\mu}_t = \mathcal{L}^{\overline{\mu}(\omega)} (\widetilde{U}, \widetilde{X}_t)$ with $\widetilde{U} \sim \text{Unif}([0,1])$, $\widetilde{\Lambda}_t (\mathbb{Z}_{\widetilde{\mu}_t}) =1$, and
\begin{align*}
    & \mathrm{d}\widetilde{X}_t = B(t, \widetilde{U}, \widetilde{X}_t, \widetilde{\mu}_t) \mathrm{d}t + \Sigma(t,\widetilde{U},\widetilde{X}_t, \widetilde{\mu}_t) \mathrm{d}\widetilde{W}_t, \quad \text{with } \widetilde{W}_t = \widetilde{M}(E \times [0,t]), \\
    & \frac{\mathrm{d}\widetilde{Z}_t}{\widetilde{Z}_t} = \int \frac{1+\pi(t, \widetilde{U}, \widetilde{X}_t, \widetilde{\mu}_t)}{\sigma (t, \widetilde{X}_t, [G \widetilde{\mu}_t](\widetilde{U}))} \bigg[\frac{\int \alpha B(t,u,x,\widetilde{\mu}_t) q(\mathrm{d}\alpha, \mathrm{d}u, \mathrm{d}x) + b(t,\widetilde{X}_t,[G\widetilde{\mu}_t](\widetilde{U}))}{1+\pi(t,\widetilde{U}, \widetilde{X}_t, \widetilde{\mu}_t)} \\
    & \hspace{5cm} - B(t,\widetilde{U},\widetilde{X}_t, \widetilde{\mu}_t)\bigg] \widetilde{M}(\mathrm{d}q, \mathrm{d}t).
\end{align*}
In view of the dynamics of $\widetilde{X}$, we can conclude that $\mu(\omega) \in \mathcal{P}_\mathcal{S}(\pi)$, for $\widehat{\mathrm{P}}^\infty$-a.s. $\omega$.

\vspace{1mm}

\noindent Step 3: Characterization of deviating agents. For $\widehat{\mathrm{P}}^\infty$-a.s. $\omega \in \mathcal{P}(\widetilde{\Omega}) \times \mathcal{P}(I \times \mathcal{C})$, let us define the probability
$$
\frac{\mathrm{d}\overline{\mu}^\circ (\omega)}{\mathrm{d}\overline{\mu}(\omega)} := \widetilde{Z}_T,
$$
and the process $(\widetilde{N}_t)_{t \in [0,T]}$ by
\begin{align*}
    & \widetilde{N}_\cdot : = \int_0^\cdot \int \frac{1+\pi(t, \widetilde{U}, \widetilde{X}_t, \widetilde{\mu}_t)}{\sigma (t, \widetilde{X}_t, [G \widetilde{\mu}_t](\widetilde{U}))} \bigg[\frac{\int \alpha B(t,u,x,\widetilde{\mu}_t) q(\mathrm{d}\alpha, \mathrm{d}u, \mathrm{d}x) + b(t,\widetilde{X}_t,[G\widetilde{\mu}_t](\widetilde{U}))}{1+\pi(t,\widetilde{U}, \widetilde{X}_t, \widetilde{\mu}_t)} \\
    & \hspace{5cm} - B(t,\widetilde{U},\widetilde{X}_t, \widetilde{\mu}_t)\bigg] \widetilde{M}(\mathrm{d}q, \mathrm{d}t).
\end{align*}
By Girsanov theorem, $\widetilde{W}_\cdot^\circ := \widetilde{W}_\cdot - \langle \widetilde{W}, \widetilde{N}\rangle_\cdot$ is a $\overline{\mu}^\circ (\omega)$-Brownian motion. It is straightforward that
\begin{align*}
    \mathrm{d}\widetilde{X}_t = \frac{\int \alpha B(t,u,x,\mu_t(\omega)) q(\mathrm{d}\alpha, \mathrm{d}u, \mathrm{d}x) \widetilde{\Lambda}_t(\mathrm{d}q) + b(t, \widetilde{X}_t, [G\mu_t(\omega)](\widetilde{U}))}{1+\pi(t, \widetilde{U}, \widetilde{X}_t, \mu_t(\omega))} \mathrm{d}t + \Sigma(t, \widetilde{U}, \widetilde{X}_t, \mu_t(\omega)) \mathrm{d}\widetilde{W}^\circ_t.
\end{align*}
Thanks to $\widetilde{\Lambda}_t (\mathbb{Z}_{\mu_t(\omega)}) =1$, $\mathrm{d}t \otimes \mathrm{d} \overline{\mu}(\omega)$-a.s. and the Markovian projection techniques, we have $\mathcal{L}^{\overline{\mu}^\circ(\omega)} (\widetilde{U}, \widetilde{X}) \in \mathcal{S} (\pi, \mu(\omega))$. Combining all results, for any bounded continuous functions $\Phi$,
\begin{align*}
    \lim_{N \rightarrow \infty} \frac{1}{N} \sum_{i=1}^N \mathbb{E}^{\mathbb{P}^i_{\Pi, \beta^i}} \left[\Phi(X^{i,i}, \mu^{i,N})\right] & = \lim_{N \rightarrow \infty} \frac{1}{N} \sum_{i=1}^N \mathbb{E}^\mathbb{P} \left[Z^i_T \Phi(X^{i}, \mu^{N})\right]  \\
    & = \lim_{N \rightarrow \infty} \mathbb{E}^{\widehat{\mathrm{P}}^N} \left[\mathbb{E}^{\overline{\mu}} \left[\widetilde{Z}_T \Phi(\widetilde{X},\widetilde{\mu})\right]\right]  \\
    &  = \mathbb{E}^{\widehat{\mathrm{P}}^\infty} \left[\mathbb{E}^{\overline{\mu}} \left[\widetilde{Z}_T \Phi(\widetilde{X}, \widetilde{\mu})\right]\right]  = \mathbb{E}^{\widehat{\mathrm{P}}^\infty} \left[\mathbb{E}^{\overline{\mu}^\circ} \left[\Phi(\widetilde{X},\widetilde{\mu})\right]\right].
\end{align*}

Therefore, we will finish the proof by defining $Q^\infty \in \mathcal{P}(\widetilde{\Omega})$:
$$
Q^\infty := \int_{\mathcal{P}(\widetilde{\Omega}) \times \mathcal{P}(I \times \mathcal{C})} \overline{\mu}^\circ(\omega) \circ \big(\widetilde{U}, \widetilde{X}, \widetilde{Z}, \widetilde{W}^\circ, \widetilde{\Lambda}, \widetilde{\mu}\big)^{-1} \widehat{\mathrm{P}}^\infty (\mathrm{d}\omega),
$$
and the corresponding $\widehat{Q}^\infty \in \mathcal{P}(\widehat{\Omega} \times \mathcal{P}(\widehat{\Omega}))$:
$$
\widehat{Q}^\infty := \int_{\mathcal{P}(\widetilde{\Omega}) \times \mathcal{P}(I \times \mathcal{C})} \overline{\mu}^\circ(\omega) \circ \big(\widetilde{U}, \widetilde{X}, \widetilde{\mu}\big)^{-1} \widehat{\mathrm{P}}^\infty (\mathrm{d}\omega), \quad \text{i.e. $\mathrm{P}^\infty$ in thoerem's statement.}
$$
As for $\widehat{\mathrm{P}}^\infty$-a.s. $\omega$, $\widetilde{\mu} = \mu(\omega)$, $\overline{\mu}(\omega)$-a.s., $\widetilde{W}^\circ$ and $\widetilde{\mu}$ are $Q^\infty$-independent. We conclude the proof by noticing that for $\mathrm{P}^\infty$-a.s. $\omega$,
$$
\mathcal{L}^{\widehat{Q}^\infty}\big(\widehat{U}, \widehat{X} \big| \widehat{\mu}(\omega)\big) = \int_{\mathcal{P}(\widetilde{\Omega}) \times \mathcal{P}(I \times \mathcal{C})} \mathcal{L}^{\overline{\mu}^\circ(\omega^\prime)} \big(\widetilde{U}, \widetilde{X} \big| \widetilde{\mu} = \mu(\omega)\big) \widehat{\mathrm{P}}^\infty(\omega^\prime) \in \mathcal{S} (\pi, \widehat{\mu}(\omega))
$$
since we verified above that $\mathcal{L}^{\overline{\mu}^\circ(\omega^\prime)} \big(\widetilde{U}, \widetilde{X} \big| \widetilde{\mu} = \mu(\omega)\big) \in \mathcal{S} (\pi, \mu(\omega))$, for $\widehat{\mathrm{P}}^\infty$-a.s. $\omega^\prime$.

\subsection{Proof of Theorem \ref{E:thm:2} (i): general kernel} \label{E:subsec:2}
In this framework, we do not have \eqref{E:eqn:44}. Hence, \eqref{E:eqn:45}-\eqref{E:eqn:26} are replaced by
\begin{align}
    & B^i_t = \frac{c(t, \mu^N_t) + b (t,X^i_t, [G \mu^N_t](u^i))}{1+\pi^i_t} + \frac{\mathcal{H}^{1,i}_t}{1+\pi^i_t} = B(t,u^i, X^i_t, \mu^N_t) + \frac{\mathcal{H}^{1,i}_t}{1+\pi^i_t}, \label{E:eqn:57} \\
    & \bigg(1 + \frac{N-1}{N} \pi^i_t + \frac{1}{N} \beta^{i,i}_t\bigg) B^{i,i}_t = \int \alpha B(t,u,x,\mu^N_t) q^{i,N}_t (\mathrm{d}\alpha, \mathrm{d}u, \mathrm{d}x) + b(t,X_t^i, [G \mu^N_t](u^i)) + \mathcal{H}^{2,i}_t, \\
    & \Sigma^{i,j}_t = \frac{\frac{1}{N}A^j_t(\mathbf{u},\mathbf{X}_t)\sigma(t,X^j_t, [G^N \mu^N_t](u^j))}{1+\pi^i_t} \longrightarrow 0, \quad \text{for } j \neq i, \\
    & \Sigma^{i,i}_t = \Sigma (t, u^i, X^i_t, \mu^N_t) + \mathcal{H}^{3,i}_t, \\
    & \bigg(1 + \frac{N-1}{N}\pi^i_t + \frac{1}{N} \beta^{i,i}_t\bigg) \Sigma^{i,i,i}_t = \sigma(t,X^i_t, [G \mu^N_t](u^i)) + \mathcal{H}^{4,i}_t, \label{E:eqn:58}
\end{align}
where $\mathcal{H}^{k,i}_t$, $k=1, \ldots,4$ are defined as
\begin{align*}
    & \mathcal{H}^{1,i}_t := \frac{\frac{1}{N}\sum_{j=1}^N \frac{\pi^j_t}{1+\pi^j_t} \left(b(t, X^j_t, [G^N \mu^N_t](u^j)) - b(t, X^j_t, [G \mu^N_t](u^j))\right)}{1-\frac{1}{N}\sum_{k=1}^N\frac{\pi^k_t}{1+\pi^k_t}}\\
    & \hspace{1.2cm} + b (t,X^i_t, [G^N \mu^N_t](u^i)) - b (t,X^i_t, [G \mu^N_t](u^i)), \\
    & \mathcal{H}^{2,i}_t := b(t,X_t^i, [G^N \mu^N_t](u^i)) - b(t,X_t^i, [G \mu^N_t](u^i)) + \frac{1}{N}\sum_{j=1}^N \beta^{i,j}_t (B^{i,j}_t - B^j_t) + \frac{1}{N}\sum_{j=1}^N \beta^{i,j}_t\frac{\mathcal{H}^{1,j}_t}{1+\pi^j_t}, \\
    & \mathcal{H}^{3,i}_t := \frac{\left(\frac{1}{N}A^i_t(\mathbf{u},\mathbf{X}_t)+1\right) \sigma(t,X^i_t, [G^N \mu^N_t](u^i)) - \sigma(t,X^i_t, [G \mu^N_t](u^i))}{1+\pi^i_t}, \\
    & \mathcal{H}^{4,i}_t :=\frac{1}{N}\sum_{j=1}^N \beta^{i,j}_t \Sigma^{i,j,i}_t+\sigma(t,X^i_t, [G^N \mu^N_t](u^i)) - \sigma(t,X^i_t, [G \mu^N_t](u^i)).
\end{align*}

Coefficients asymptotics are accordingly changed to the following. We start by introducing
\begin{align*}
    \delta^{i,N}_t &= \big|B^i_t - B(t,u^i, X^i_t, \mu^N_t)\big| +\big|\Sigma^{i,i}_t - \Sigma(t,u^i,X^i_t, \mu^N_t)\big| + \sum_{i \neq j} \big|\Sigma^{i,j}_t\big|, \\
    r^{i,N}_t &= \bigg|\frac{B^{i,i}_t - B^i_t}{\Sigma^{i,i,i}_t} -\frac{\int \alpha B(t,\overline{x},\mu^N_t) q^{i,N}_t (\mathrm{d}\alpha, \mathrm{d}\overline{x}) + b(t,X_t^i, [G \mu^N_t](u^i)) -(1+\pi^i_t)B(t,u^i,X^i_t,\mu^N_t)}{\sigma(t, X^i_t, [G \mu^N_t](u^i))} \bigg|.
\end{align*}
Using \eqref{E:eqn:57}-\eqref{E:eqn:58}, together with the results proved in Lemmas \ref{E:lemma:2} and \ref{E:lemma:4}, we obtain
\begin{align}
    \max_{1 \leq i \leq N}r^{i,N}_t +  \max_{1 \leq i \leq N}\delta^{i,N}_t &\leq C \max_{1 \leq i \leq N}\bigg(\big|\mathcal{H}^{1,i}_t\big| +  \big|\mathcal{H}^{2,i}_t\big| +\big|\mathcal{H}^{3,i}_t\big| +\big|\mathcal{H}^{4,i}_t \big|+ \frac{1}{N}\bigg) \nonumber \\
    & \leq C \bigg\{\bigg[\bigg(N^2\big\|G^N -G \big\|_\square +N^2 \sup_{1 \leq i,j \leq N} \mathcal{R}^N_{ij}\bigg) \bigg(1+\frac{1}{N}\sum_{k=1}^N \big|X^k_t\big|^2\bigg)\bigg]^{1/2} + \frac{1}{N}\bigg\}. \label{E:eqn:32}
\end{align} 
Notice that 
$$
N^2\big\|G^N -G \big\|_\square +N^2 \sup_{1 \leq i,j \leq N} \mathcal{R}^N_{ij} \longrightarrow 0
$$
since $N^2 \|G^N -G \|_\square \rightarrow 0$ and $G$ is continuous on $[0,1]^2$. The upper bound gave in \eqref{E:eqn:32} will only be used to ensure the following equality,  
\begin{align}
    & \lim_{N \rightarrow \infty} \mathbb{E}^{\mathbb{P}}  \left[\bigg|\frac{1}{N}\sum_{i=1}^N\left(M^{f}_t - M^{f}_s\right) \big(u^i,X^i, Z^i, W^i, \Lambda^i, \mu^N\big) \Phi\big(u^i,X^i_{s \wedge \cdot}, Z^i_{s \wedge \cdot}, W^i_{s \wedge \cdot}, \Lambda^i_{s \wedge \cdot}, \mu^N_{s \wedge \cdot}\big)\bigg|^2\right] \nonumber \\
    & = \lim_{N \rightarrow \infty} \mathbb{E}^\mathbb{P} \left[\bigg|\frac{1}{N} \sum_{i=1}^N  \left(M^{f,i}_t - M^{f,i}_s\right) \Phi\big(u^i,X^i_{s \wedge \cdot}, Z^i_{s \wedge \cdot}, W^i_{s \wedge \cdot}, \Lambda^i_{s \wedge \cdot}, \mu^N_{s \wedge \cdot}\big)\bigg|^2\right]  \nonumber 
\end{align}
The rest of the proof is the same as Section \ref{E:subsec:1}.

\bibliographystyle{plain}
\bibliography{Mutual_holding}

\end{document}